\theoremstyle{plain} 
\newtheorem{thm}{Theorem}[section]
\newtheorem{cor}[thm]{Corollary}
\newtheorem{lem}[thm]{Lemma}
\newtheorem{nota}[thm]{Notation}
\newtheorem{prop}[thm]{Proposition}
\newtheorem{defn}[thm]{Definition}
\newtheorem{exam}[thm]{Example}
\newtheorem{rem}[thm]{Remark}
\newtheorem{res}[thm]{Result}
\newcommand{\Ann}{\mbox{Ann}\,}
\newcommand{\Hom}{\mbox{Hom}\,}
\newcommand{\Ext}{\mbox{Ext}\,}
\newcommand{\e}{\mbox{e}\,}
\newcommand{\Spec}{\mbox{Spec}\,}
\newcommand{\Max}{\mbox{Max}\,}
\newcommand{\Ass}{\mbox{Ass}\,}
\newcommand{\tr}{\mbox{tr}\,}
\newcommand{\depth}{\mbox{depth}\,}
\newcommand{\vdim}{\mbox{vdim}\,}
\newcommand{\q}{\mbox{q}\,}
\newcommand{\Soc}{\mbox{Soc}\,}
\newcommand{\GGL}{\mbox{GGL}\,}
\newcommand{\AGL}{\mbox{AGL}\,}
\newcommand{\h}{\mbox{ht}\,}
\newcommand{\E}{\mbox{E}}
\renewcommand{\H}{\mbox{H}}
\newcommand{\redu}{\mbox{r}\,}
\newcommand{\re}{\mbox{red}\,}
\newcommand{\lo}{\longrightarrow}
\newcommand{\su}{\subseteq}
\newcommand{\fa}{\mathfrak{a}}
\newcommand{\fm}{\mathfrak{m}}
\newcommand{\fp}{\mathfrak{p}}
\newcommand{\fy}{\mathfrak{y}}
\newcommand{\fq}{\mathfrak{q}}
\newcommand{\fn}{\mathfrak{n}}
\newcommand{\fx}{\mathfrak{x}}
\begin{document}
	\bibliographystyle{amsplain}
		\title[rings with canonical reductions]
		{rings with canonical reductions}
		\author[M. Rahimi]{Mehran Rahimi$^1$}
	\keywords{Cohen-Macaulay local ring, Goenstein ring, almost Gorenstein ring, canonical ideal, canonical reduction.}
	\subjclass[2010]{13H10, 13H15.}
	\address{$^{1}$ Faculty of Mathematical Sciences and Computer,
		Kharazmi University, Tehran, Iran.}
	\email{mehran\_rahimy@yahoo.com}

\begin{abstract}
We study the class of one dimensional
	 Cohen-Macaulay local rings with canonical reductions, i.e. admit  canonical ideals which are reductions of the maximal ideals, show that it contains the class of almost Gorenstein rings and study characterizations for rings obtained by idealizations or by numerical semigroup rings to have canonical reductions.
\end{abstract} 
\maketitle
\section{Introduction}

 In \emph{\cite{BF}}, Barucci and Fr\"{o}berg, (1997), introduced the class of almost Gorenstein rings which is a subclass of one-dimensional analytically unramified rings. This concept has been generalized to Cohen-Macaulay local rings, studied in details by Goto et.al \cite{GTP}, \cite{GTT} and \cite{GTTU}.  The theory is still developing (see \cite{CDKM} and \cite{GGL})  introducing new classes of rings such as 2-almost Gorenstein local (2-$\AGL$ for short) and generalized Gorenstein local ($\GGL$ for short) rings. 
 
 The aim of this paper is to study Cohen-Macaulay local rings possessing canonical ideals.  
 Over a Cohen-Macaulay  local ring $(R, \fm, k)$ of dimension $d$, a maximal Cohen-Macaulay module $\omega$  is called a {\it canonical module} of $R$ if $\dim_k\Ext_R^i(k , \omega)=\delta_{id}$. An ideal of $R$, isomorphic to a canonical module, is called a {\it canonical ideal} of $R$. A local ring admits a canonical module if and only if it is a homomorphic image of a Gorenstein ring. Such a ring $R$ admits a canonical ideal if and only if $R_\fp$ is Gorenstein for all associated prime ideals $\fp$ of $R$. (For more informations on canonical modules, see \cite{HK} and \cite[Section 3.3]{BH}).
  
 In Section 2,  for a fixed $R$, we denote by $\mathfrak{C}_R$, the set of all canonical ideals of $R$ and investigate it in  details. Here is a result of this kind.
 
 \begin{res}\em{(Theorem \ref{MaxCanIdeal})} Let $I$ and $J$ belong to $\mathfrak{C}_R$. \begin{itemize}
 		\item [(a)] If $I \subset J$, then $I = xJ$ for some $x \in \fm$ where $xR = (I:J)$.
 		\item [(b)] If $x \in I$ is a regular element, then $xJ = yI$ for some $y \in J$.
 		\item [(c)] Let $\dim R = 1$. If $\re(\fm) = t$ then $\fm^{t+1}$ does not contain any element of $\emph\Max({\mathfrak{C}_R})$, where $\re(\fm)$ denotes the reduction number of $\fm$.
 	\end{itemize}
 \end{res}
As (a) shows, the {\it maximal canonical ideals} of $R$, $\Max(\mathfrak{C}_R)$, determines all other canonical ideals. So, non-maximal canonical ideals  are contained in  $\fm^2$. If $R$  have minimal multiplicity, then maximal canonical ideals does not belong to $\fm^2$ which is a partial converse of (c).
 
 In Section 3, we concentrate on one-dimensional Cohen-Macaulay local rings admitting a canonical ideal. When this is the case, canonical ideals are $\fm$-primary and so, it is natural to ask when a canonical ideal is a reduction for $\fm$. It will be shown first that a canonical ideal is a reduction for $\fm$ when $R$ is an almost Gorenstein ring. By \cite{GTP}, a one-dimensional Cohen-Macaulay local ring $(R,\fm)$ is called {\it almost Gorenstein} if $\e_{\fm}^0(R) \leq \redu(R)$. Proposition \ref{SFE} guarantees that almost Gorenstein rings have a canonical reduction. This motives us to prepare next definition.
 \begin{defn}\em{(Definition \ref{DCR1}) An ideal $I$ is called a {\it canonical reduction} of $R$ if $I$ is a canonical ideal of $R$ and is a reduction of $\fm$.}
 \end{defn}
 Its clear that all Gorenstein local rings, with infinite residue fields, admit  canonical reductions. Let $R$ be a non-Gorenstein ring. We set $S_{\mathfrak{C}_R} = \{ \ell(R/K) \ | \ K \in \mathfrak{C}_R \ \}$. Proposition \ref{CanRedMaxIdeal}  shows that a canonical reduction $J$, if exists, is a  maximal canonical ideal which is not contained in $\fm^2$ and  $\ell(R/J)\in\min(S_{\mathfrak{C}_R})$.

 Our next result emphasizes on  singular one-dimensional Cohen-Macaulay local rings. 
 \begin{res} \emph{(Proposition \ref{Min(S_c)=2})}
 	Let $(R,\fm)$ be a singular one-dimensional Cohen-Macaulay local ring. Then
 	\begin{itemize}
 		\item [(a)] If $K$ is a canonical ideal of $R$, then $\fm K = \fm (K : \fm)$.
 		\item [(b)] If $\min(S_{\mathfrak{C}_R}) = 2$ then $R$ has a canonical reduction.
 	\end{itemize}
 \end{res}
 Part (a) is proved before for some special canonical ideals (see\cite[Lemma 3.6]{CHV}). In (b) we deal with the minimum value of $S_{\mathfrak{C}_R}$. Note that the maximal ideals can not be canonical ideals when $R$ is not regular. 
 
  It is also proved in (a) that canonical ideals can not be integrally closed in one-dimensional singular local rings. A well known fact that canonical ideals of a normal local ring are integrally closed, motives us to investigate the theory for rings with dimensions greater than $1$.
 
 \begin{res}\emph{(Lemma \ref{LemInt} and Corollary \ref{CorInt})}
 	Let $\dim R > 1$ and $K \in \mathfrak{C}_R$. Then
 	\begin{itemize}
 		\item $K$ is integrally closed if and only if $K$ is a radical ideal.
 		\item If all canonical ideals of $R$ are integrally closed, then $R$ is an integrally closed ring.
 	\end{itemize} 
 \end{res}

Next we try to find rings that does not have a canonical reduction. Proposition \ref{R/K=2MinMult} show that non-almost Gorenstein rings with minimal multiplicity does not have a  canonical reduction.

The concepts of generalized Gorenstein and nearly Gorenstein rings are defined in \cite{GGL} and \cite{HTS}, respectively. Here we prove that nearly Gorenstein rings have canonical reductions. Also we provide a criterion for a generalized Gorenstein ring to admit a canonical reduction. These results will be generalized to rings with arbitrary positive dimensions in section 5. 

We close section 3, by providing a characterization for a numerical semigroup ring to admit a canonical reduction (Theorem \ref{NCR}). Finally, we study the  behavior of certain Ulrich ideals in a ring with canonical reduction.
\begin{res}\emph{(Proposition \ref{URed})}
	Let $(R, \fm)$ be a local ring with $\dim R = 1$, $R/\fm$ is infinite, and $R$ admits an Ulrich ideal $I$ with $\mu(I) > 2$. Then the following statements are equivalent.
	\begin{itemize}
		\item[(i)] $R$ has a canonical reduction.
		\item[(ii)] $R$ is generalized Gorenstein with respect to $I$, and $I$ is a  reduction of $\fm$.
	\end{itemize} 
\end{res}  

Section 4 is motivated by \cite[Section 6]{GTP}. The main purpose is to find some characterizations for rings which admit canonical reductions and provide some examples by means of idealizations. In this section the ring $R$ is a one-dimensional Cohen-Macaulay local ring with canonical module $K_R$. It is well-known that if $M$ is maximal Cohen-Macaulay $R$-module, then $R\ltimes M$ is a Cohen-Macaulay local ring with the same residue field of $R$. 
\begin{res}\emph{(Proposition \ref{IProp1})}\label{r} Let $R$ be a one-dimensional Cohen-Macaulay local ring with canonical module $K_R$.
	Then the following statements are equivalent.
		\begin{enumerate}[\em(i)]
			\item $R\ltimes M$ admits a canonical reduction $I\ltimes L$, for some submodule $L$ of $M$.
			\item $M \cong \emph\Hom_R(I , K_R)$ and $I$ is a reduction of $\fm$.
		\end{enumerate}
\end{res}
As an immediate corollary, $R\ltimes \Hom_R(\fm , K_R)$ admits a canonical reduction, which means every Cohen-Macaulay local ring $R$, of dimension one, is  homomorphic image of a ring $S$ such that $S$ admits a canonical reduction and $\min(S_{\mathfrak{C}_S}) = 2$.

Next result shows nice behavior of rings with canonical reductions via idealization.
\begin{res}\emph{(Theorem \ref{IdealRed})}
	Under the conditions in Result \ref{r}, the followings statements are equivalent.
	\begin{enumerate}[\em(i)]
		\item $R$ has a canonical reduction.
		\item $R\ltimes R$ has a canonical reduction.
		\item $R \ltimes \fm$ has a canonical reduction.
	\end{enumerate}
\end{res}
We end section 4 by constructing some examples of rings with canonical reductions of arbitrary large canonical index in  Remark \ref{Irem}.

Section 5 deals with rings with canonical reductions whose dimensions are $>1$. First comes the definition.
\begin{defn}
	\em{Assume that $(R,\fm)$ is a $d$-dimensional Cohen-Macaulay local ring admitting a canonical ideal. The ring $R$ is said to  have a  \emph{canonical reduction} $K$ if $K$ is a canonical ideal of $R$ and  there exists an equimultiple  ideal $I$ such that $\h(I) = d-1$ and $K + I$ is a reduction of $\fm$. }
\end{defn}
It is proved that every nearly Gorenstein ring admits a canonical reduction (Proposition \ref{CanRedNG}). Also Proposition \ref{AGCR} gives a criterion for a generalized Gorenstein ring to have a canonical reduction. In particular, it shows that every $d$-dimensional almost Gorenstein ring admits a canonical reduction.
After a discussion about equimultiple ideals, we study canonical reduction via linkage. Here is one of the results.
\begin{res}\emph{(Proposition \ref{LinkCanRed})}
	Let $R$ admits a canonical reduction and $I$ and $J$ be zero-hight ideals of $R$ such $I$ is linked to $J$. If $R/I$ is Cohen-Macaulay and $\mu(J) = 1$  then $R/I$ has a canonical reduction. 
\end{res} 
 Note that if $R$ is Gorenstein and $I$ and $J$ be as above then it is known that $R/I$ is Gorenstein.

Finally, we end the paper by investigating  canonical reductions via flat local homomorphisms.
\begin{res}\emph{(Theorem \ref{canredflat})} Assume that $\varphi : (R,\fm) \lo (S, \fn)$ is a flat local ring homomorphism of generically Gorenstein Cohen-Macaulay local rings of positive dimensions $d$ and $n$, respectively. Assume further that $S/\fm S$ is Gorenstein. Consider the following two conditions.
	\begin{itemize}
		\item [(a)] $R$ has a canonical reduction and $\e_{\fn}^0(S) = \e_{\fn}^0(S/\fm S)\e_{\fm}^0(R)$.
		\item [(b)] $S$ has  a canonical reduction.
	\end{itemize}
	Then the following statements hold true.
	\begin{itemize}
		\item [(i)] If $n = d$, then (a)$\Rightarrow$(b). The converse holds if $R/\fm$ is infinite.
		\item [(ii)] If $n > d$, then  (a)$\Leftrightarrow$(b)  if $R/\fm$ is infinite and $S/\fm S$ is a regular ring.
	\end{itemize}	
\end{res}

Throughout $(R,\fm, k)$ is a Noetherian local  ring  with maximal ideal $\fm$ and all $R$-modules are finitely generated. Set $\e_{\fm}^0(R)$ as the multiplicity  of $R$ with respect to $\fm$. For an $R$-module $M$, we denote $\ell(M)$ for the length of $M$ as $R$-module, and set the Cohen-Macaulay type of $M$ as $\redu(M): = \ell(\Ext_R^{\dim(M)}(R/\fm , M))$.

		 



	\section{The set of canonical ideals of a ring}

 In this chapter,  we show that all canonical ideals can be expressed by the set of maximal canonical ideals. In fact, we will prove that each canonical ideal of $R$ is equal to $xI$ for some maximal canonical ideal $I$ and an $R$-regular element $x$. 
		
	Throughout this section, $(R, \fm)$ is a  Cohen-Macaulay local ring of positive dimension admitting a canonical ideal. Let $\mathfrak{C}_R$ be the set of all canonical ideals of $R$ and set $ \Max(\mathfrak{C}_R)$ as the set of all maximal elements of $\mathfrak{C}_R$ with respect to inclusion.

\begin{lem}\label{mimu}
	Assume that  $(R, \fm)$ is a one-dimensional Cohen-Macaulay local ring. Let $I \subsetneq J$ be  ideals of $R$ such that $I$ is a canonical ideal of $R$. Then $\mu(I : J) = \emph\redu(J)$, where $\emph\redu(J)=\ell(\emph\Ext^1_R(R/\fm, J))$ is the type of $J$ as an $R$-module.
\end{lem}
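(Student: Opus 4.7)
The plan is to identify the colon ideal $I :_R J$ with $\Hom_R(J, I)$ and then invoke the standard duality between minimal number of generators and Cohen--Macaulay type that the canonical module induces on maximal Cohen--Macaulay modules. Since $I \in \fC$ and $\dim R = 1$, the ideal $I$ is maximal Cohen--Macaulay and $\fm$-primary, so $J \supsetneq I$ is $\fm$-primary as well and is MCM as an $R$-module; both ideals contain $R$-regular elements. I would then identify $I :_R J$ with $\Hom_R(J, I)$ via the map $r \mapsto (j \mapsto rj)$. Injectivity follows from the presence of a regular element in $J$. For surjectivity, any $f \in \Hom_R(J, I)$ restricts to an $R$-endomorphism of the canonical ideal $I$, and since $\Hom_R(I, I) \cong R$, we have $f|_I = \mu_r$ for some $r \in R$; using once more that $I$ contains a regular element, in fact $f = \mu_r$ on all of $J$, so $rJ \subseteq I$.

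The core step is then the identity $\mu\bigl(\Hom_R(M, I)\bigr) = \redu(M)$ for an arbitrary MCM $R$-module $M$. To prove it I would choose an $R$-regular element $x \in \fm$; it is also regular on $M$ and on $I$, and $\bar R := R/xR$ is Artinian Gorenstein with canonical module $\bar I := I/xI \cong \E_{\bar R}(k)$. Because $\Ext^1_R(M, I) = 0$ (canonical dual of a MCM module), reduction modulo $x$ commutes with $\Hom_R(-, I)$:
\[
\Hom_R(M, I)/x\Hom_R(M, I) \;\cong\; \Hom_{\bar R}(\bar M, \bar I),
\]
which is precisely the Matlis dual of $\bar M$ over $\bar R$. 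For a finite length module $N$ over an Artinian local ring, the minimal number of generators of its Matlis dual equals $\dim_k \Soc(N)$; combining this with the standard identification $\Soc(\bar M) \cong \Ext^1_R(k, M)$, read off from the sequence $0 \to M \xrightarrow{x} M \to \bar M \to 0$ together with $\Hom_R(k, M) = 0$, yields $\mu(\Hom_R(M, I)) = \redu(M)$.

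Applying this to $M = J$ then gives $\mu(I : J) = \redu(J)$, as required. The main technical obstacle is the colon-equals-Hom identification: one must verify that $\Hom_R(J, I)$ is realised as an honest ideal of $R$ rather than just a fractional ideal inside the total ring of fractions. This is precisely where the canonical property of $I$ enters through $\Hom_R(I, I) \cong R$, which lets us pull the multiplier $r$ back into $R$ rather than into $Q(R)$.
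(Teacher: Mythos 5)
Your proposal is correct, but it follows a genuinely different route from the paper's. The paper works with Matlis duality over $R$ itself: it applies $\Hom_R(-,\E)$ to $0 \to J/I \to R/I \to R/J \to 0$, uses Gorensteinness of the Artinian ring $R/I$ (via the double-annihilator identity $T = I : (I:T)$) to identify $\Hom_R(J/I,\E)$ with $R/(I:J)$, computes the Bass numbers $\mu^0(J/I)=1$ and $\mu^1(J/I)=\redu(J)$ from the long exact sequence of $\Ext_R(R/\fm,-)$ applied to $0 \to I \to J \to J/I \to 0$, and finally reads off $\mu(I:J)$ as the first Betti number of $R/(I:J)$ by dualizing the minimal injective resolution of $J/I$ (which forces a passage to $\widehat{R}$). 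You instead identify $I:J$ with the canonical dual $\Hom_R(J,I)$ directly, using $\Hom_R(I,I)\cong R$ to pull the multiplier back into $R$, and then prove the reusable duality $\mu(\Hom_R(M,I))=\redu(M)$ for maximal Cohen--Macaulay $M$ by cutting down by a parameter $x$ and invoking Matlis duality over the Artinian reduction. Each step you rely on checks out: $J$ is MCM because it contains the $\fm$-primary canonical ideal $I$ and has positive depth inside $R$; $\Ext^1_R(J,I)=0$ since $I$ is canonical and $J$ is MCM, so $\Hom_R(J,I)/x\Hom_R(J,I)\cong\Hom_{\bar R}(\bar J,\bar I)$; and $\mu$ of a Matlis dual equals the socle dimension, which via $\Soc(\bar J)\cong\Ext^1_R(R/\fm,J)$ gives the type. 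Your argument buys a cleaner, more modular statement ($\mu(M^\dagger)=\redu(M)$) and avoids the completion step, at the cost of needing the vanishing of $\Ext^1$ against the canonical module and the colon-equals-Hom identification; the paper's argument stays entirely inside length-finite modules and their injective resolutions.
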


\begin{proof}
	Set $\E = \E_{R}(R/\fm)$ the injective hull of $R/\fm$. Applying $\Hom_R(- , \E)$ on the exact sequence $0 \lo J/I \lo R/I \lo R/J \lo 0$ gives the exact sequence $$0 \lo \Hom_R(R/J , \E) \lo \Hom_R(R/I , \E) \lo \Hom_R(J/I , \E) \lo 0.$$
	As $R/I$ is a Gorenstein ring of dimension $0$, $\Hom_R(R/J , \E) \cong T/I$ for some proper ideal $T$ of $R$ and $\Hom_R(J/I , \E) \cong R/T$. By Gorensteinness of $I$, $T = I : (I : T) = (I : J)$ and 	so $\Hom_R(J/I , \E) \cong R/(I : J)$. On the other hand, the sequence $0 \lo I \lo J \lo J/I \lo 0$ implies the exact sequence $$0 \lo  \Hom_R(R/\fm , J/I) \lo \Ext_R^1(R/\fm , I) \lo \Ext_R^1(R/\fm , J) \lo \Ext_R^1(R/\fm , J/I) \lo 0.$$
	As $\redu(I) = 1$, we get the Bass numbers $\mu^0(J/I) = 1$ and so $\mu^1(J/I) = \redu(J)$. Applying $\Hom_R( - , \E)$ on the  minimal injective resolution  
	$0 \lo J/I \lo \E \lo \E^{\redu(J)} \lo \cdots $ of $J/I$, implies the exact sequence  $ \cdots \lo \widehat{R}^{\redu(J)} \lo \widehat{R} \lo R/(I : J) \lo 0$, which is the minimal free resolution of $\Hom_R(J/I , \E) \cong R/(I : J)$ over $\widehat{R}$. Therefore, we have $\mu(I : J) = \mu_{\widehat{R}}((I : J)\widehat{R}) = \redu(J)$.
\end{proof}

\begin{rem}\label{CNM}\emph{
		Let $R$ be a one-dimensional Cohen-Macaulay local ring which is not regular. In \cite[Corollary 2.4]{GHI}, it is shown that the maximal ideal of $R$ is not a canonical ideal of $R$. As a consequence, Lemma \ref{mimu} shows that $\fm^i$ is not canonical ideal of $R$ for each $i>0$. }
\end{rem} 
In the following, it is shown that every canonical ideal of $R$ which is not contained in $\fm^2$ is a maximal canonical ideal of $R$. First we state a lemma. 

\begin{lem}\label{suploc}
	Assume that $(R,\fm)$ is a $d$-dimensional Cohen-Macaulay local ring and that $I$ is a height one ideal of $R$. Assume that $a \in I$ is a superficial element which is $R$-regular. If $P \in \emph\Ass(R/I)$ such that $I_P$ is cyclic then $I_P = aR_P$.  
\end{lem}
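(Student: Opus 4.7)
The plan is to localize everything at $P$ and reduce the statement to the classical fact that in a one-dimensional Cohen--Macaulay local ring, a regular superficial element for a primary ideal generates a reduction.

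First, I would pass to $R_P$. Since colon with a single element commutes with localization, the defining condition of superficiality $(I^{n+1}:_R a)\cap I^c = I^n$ (for $n \gg 0$ and some fixed $c$) localizes to the corresponding condition in $R_P$, so $a/1$ is superficial for $I_P$. Write $I_P = bR_P$ with $b \in R_P$. Because $a$ is $R$-regular, $a/1$ is $R_P$-regular, and writing $a = bu$ with $u \in R_P$ shows both $b$ and $u$ are regular in $R_P$ (each is a factor of a regular element). The goal now is to show $u$ is a unit.

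Next I would argue that $\h(P) = 1$. Since $P \in \Ass_R(R/I)$, we have $PR_P \in \Ass_{R_P}(R_P/I_P) = \Ass_{R_P}(R_P/bR_P)$, so $\depth(R_P/bR_P) = 0$. But $R_P$ is Cohen--Macaulay of dimension $\h(P)$ and $b$ is regular on $R_P$, so $R_P/bR_P$ is Cohen--Macaulay of dimension $\h(P)-1$; equating depth to dimension forces $\h(P)=1$. Consequently $R_P$ is a one-dimensional Cohen--Macaulay local ring and $I_P = bR_P$, being proper and containing the regular element $b$, is primary to the maximal ideal $PR_P$.

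Now I invoke the classical reduction--superficial correspondence in dimension one: in a 1-dimensional Cohen--Macaulay local ring, a regular superficial element for a primary ideal $J$ generates a reduction of $J$, so in fact $J^{n+1} = aJ^n$ for $n \gg 0$. (Briefly: the superficial hypothesis makes multiplication by $a^\ast \in I_P/I_P^2$ injective on $I_P^n/I_P^{n+1} \to I_P^{n+1}/I_P^{n+2}$ in large degrees; both sides have the same length $\e(I_P)$ for $n \gg 0$ in the one-dimensional Cohen--Macaulay setting, so the map is an isomorphism, and Nakayama then yields $I_P^{n+1} = aI_P^n$.) Applying this gives $b^{n+1}R_P = a b^n R_P$, so $b^{n+1} = abv b^n$ for some $v \in R_P$, and regularity of $b^n$ cancels to give $b = av$. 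Thus $b \in aR_P$, and together with $a = bu \in bR_P$ we conclude $aR_P = bR_P = I_P$.

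The main obstacle is the middle step of justifying that $\h(P)=1$ and especially the one-dimensional reduction statement used in the last paragraph; the latter is standard but should either be cited carefully (e.g.\ from Huneke--Swanson) or reproduced via the short Nakayama argument on the associated graded ring sketched above. Everything else is a routine unwinding of the localized superficial condition.
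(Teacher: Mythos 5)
Your proof is correct and follows essentially the same route as the paper's: both localize the superficiality data at $P$ (the paper does this via the colon identity $I^{n+1}:a=I^n$ from Swanson--Huneke, Lemma 8.5.3) and then deduce that $a$ generates $I_P$. Your write-up is in fact more complete than the paper's, since you verify $\h(P)=1$ and justify the final step --- that a regular superficial element of the $PR_P$-primary ideal $I_P=bR_P$ in the one-dimensional local ring $R_P$ generates a reduction, whence $b=av$ and $aR_P=bR_P$ --- which the paper asserts only tersely.
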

\begin{proof}
	Let  $I_P = xR_P$. By \cite[Lemma 8.5.3]{SH}, $I^{n+1} : a = I^n$ for $n \gg 0$. Localizing at $P$, we get $x^{n+1}R_P : aR_P = x^nR_P$ which means $aR_P$ is a superficial element of $xR_P$. Therefore $aR_P = xR_P = I_P$.
\end{proof} 

\begin{defn}\label{equimultiple}\em{	Let $R$ be a local ring and $I$ be an ideal of $R$. Then $I$ is called {\it equimultiple} if $I$ has a reduction $J$ such that $\mu(J) = \h(I)$. In this case $J$ is a minimal reduction of $R$.}
		\end{defn}
		 Note that $\fm$-primary ideals are equimultiple ideals.
	
\begin{thm}\label{MaxCanIdeal} 
	Assume that $(R, \fm)$ is  a non-Gorenstein Cohen-Macaulay local ring of dimension $d>1$ with infinite residue field. Let $I$ be a canonical ideal of $R$.
	\begin{itemize}
		\item [(a)] If  $I\not\in\emph\Max({\mathfrak{C}_R})$ then  for  any $T\in{\mathfrak{C}_R}$ containing $I$ properly, $I = xT$ for some regular element $x\in\fm$. In particular, $I \su \fm^2$.
		\item [(b)] (See \cite[Satz 2.8]{HK}.) If $K \in \mathfrak{C}_R$ and $x \in K$ is an $R$-regular element, then there exists $y \in I$ such that $yK = xI$. In this situation, $x$ is a reduction of $K$ if and only if $y$ is a reduction of $J$. 
		
		\item [(c)]  Let $\dim R = 1$. If $\emph\re(\fm) = t$ then $\fm^{t+1}$ does not contain any element of $\emph\Max({\mathfrak{C}_R})$. In particular, if $R$ has minimal multiplicity, then non-maximal canonical ideals of $R$ are exactly the canonical ideals which contained in $\fm^2$. 
	\end{itemize} 
\end{thm}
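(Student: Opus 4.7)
My plan handles (a), (b), (c) in sequence, driven by two standard facts about a canonical ideal $K$: the endomorphism identification $\Hom_R(K,K) \cong R$, and the fact that $K$ has height one and so contains a regular element (since admitting a canonical ideal forces $R$ to be generically Gorenstein). Throughout I work inside $Q = \Tot(R)$ and realise $R$-isomorphisms between canonical ideals as multiplication by elements of $Q$.

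For (a), let $I \subsetneq T$ with both in $\fC$. Since $T$ and $I$ are each isomorphic to the canonical module, there is an $R$-isomorphism $T \to I$ realised as multiplication by some $x \in Q$, so $xT = I$. The main subtlety is to promote $x$ from $Q$ to $R$: from $xT = I \subseteq T$ the operator ``multiplication by $x$'' is an $R$-endomorphism of $T$, and the identification $\mbox{End}_R(T) \cong R$ then pins $x$ down as an element of $R$. As $I \ne T$, $x$ is a non-unit, so $x \in \fm$; and $x$ is regular because $I = xT$ contains regular elements. Finally, non-Gorensteinness gives $T \subsetneq R$, hence $T \subseteq \fm$ and $I = xT \subseteq \fm^2$.

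For (b), the same idea applied to the iso $K \to I$ produces $u \in Q$ with $uK = I$. Setting $y := ux$ gives $y \in uK = I$ and $yK = u x K = x(uK) = xI$, with $y$ regular since $xI$ contains regular elements. For the reduction statement, $I^n = u^n K^n$ as fractional ideals for all $n \ge 0$, so $K^{n+1} = xK^n$ is equivalent (after multiplying by $u^{n+1}$) to $I^{n+1} = u^{n+1} x K^n = y \cdot u^n K^n = y I^n$; that is, $xR$ is a reduction of $K$ iff $yR$ is a reduction of $I$.

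For (c), assume $\dim R = 1$ and suppose for contradiction that some $I \in \Max(\fC)$ lies in $\fm^{t+1}$. Using the infinite residue field pick a principal minimal reduction $(a)$ of $\fm$ with $\fm^{t+1} = a\fm^t$; so $a$ is a regular non-unit in $\fm$ and $I \subseteq a\fm^t \subseteq aR$. Thus $T := a^{-1}I \subseteq R$, and multiplication by $a^{-1}$ is an $R$-isomorphism $I \xrightarrow{\sim} T$, making $T$ a canonical ideal. But $I = aT \subsetneq T$ strictly (Nakayama: $a \in \fm$ and $T \ne 0$), contradicting the maximality of $I$. The ``in particular'' is now immediate: minimal multiplicity gives $\re(\fm) = 1$, so (c) forbids maximal canonical ideals in $\fm^2$ while (a) places every non-maximal canonical ideal in $\fm^2$.
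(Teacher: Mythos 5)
Your argument is correct, and for part (a) it takes a genuinely different --- and more uniform --- route than the paper's. The paper splits (a) into two cases: for $d=1$ it invokes Lemma~\ref{mimu} (Matlis duality gives $\mu(I:T)=\redu(T)=1$, whence $(I:T)=aR$, $I=aJ$, and $T=I:(I:T)=J$), while for $d>1$ it runs a considerably more delicate argument with superficial elements and localization at the associated primes of $R/(I:T)$. You instead realise the abstract isomorphism $T\cong I$ as multiplication by an element $x$ of $\Tot(R)$ and use the homothety isomorphism $R\cong\Hom_R(T,T)$ to force $x\in R$; this handles all dimensions at once, bypasses both the duality lemma and the superficial-element machinery, and still yields the ``in particular'' clause ($T\su\fm$ by non-Gorensteinness and $x\in\fm$ since $I\ne T$). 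The one thing the paper's route records that you do not state is the identification $xR=(I:T)$, which is quoted in the introduction and reused later (e.g.\ in Corollary~\ref{CorInt}); it does, however, fall out of your setup in one extra line, since $bT\su xT$ forces $b/x\in\Hom_R(T,T)=R$. For (b) your fractional-ideal computation $I^n=u^nK^n$ actually proves the asserted equivalence between $x$ being a reduction of $K$ and $y$ being a reduction of $I$, which the paper states without proof; your construction also gives $y=ux\in I$ directly, whereas the paper's one-line argument only produces $y\in\fm$ and leaves $y\in I$ to the reader. Part (c) is essentially the paper's own argument (write $\fm^{t+1}=a\fm^t$, factor $I=aT$ with $T=a^{-1}I$ a strictly larger canonical ideal), with the ``minimal multiplicity'' addendum correctly assembled from $\re(\fm)=1$ together with part (a).
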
  
\begin{proof}
	(a). Assume that $T$ is a  canonical ideal of $R$ properly containing $I$. First let $d=1$. As $\redu(T) = 1$, by Lemma \ref{mimu}, 
	$\mu_R(I : T)  = 1$. One may write $(I : T) = aR$ for some regular element $a \in \fm$. As  $I \subseteq (I : T) = aR$, there is an ideal $J$ such that $I = aJ$ and consequently $J$ is a canonical ideal of $R$.  As $R$ is non-Gorenstein, $J$ is a proper ideal and so $I\subseteq \fm^2$. As $I$ is Gorenstein ideal of $R$, $T = (I : a) = (aJ : a) = J$. Note that in this case, we do not need the assumption that $R/\fm$ is infinite. 
	
	Now let $d>1$. Choose $a \in (I:T)$ to be a superficial element which is $R$-regular. As $R/I$ is Gorenstein, $T = I : (I : T) \su (I : a)$. We claim that $T = (I : a)$. To proceed the proof, we will show that $(I : a)R_P \su T_P$ for all $P \in \Ass(R/T)$. By Lemma \ref{suploc}, if $P \in \Ass(R/(I:T))( = \Ass(T/I))$ then $IR_P = xTR_P$ for some $x \in PR_P$  which depends  on $P$. By proof of one-dimensional case and Lemma \ref{suploc}, $(I : T)R_P = aR_P$ and $I_P = aTR_P$ for all $P \in \Ass(R/(I:T))$. Now let $P \in \Ass(R/T)$. There is two possible cases. 
		
		(i) $P \in \Ass(R/(I:T))$. In this case $(I : a)R_P = (aT_P : aR_P) = T_P \su T_P$.
		
		(ii) $P \notin \Ass(R/(I:T))$. Then $T_P = I_P$ and $(I : a)R_P = (T_P : aR_P)$. If $a \notin P$ then we are done. Assume contrary that $a \in P$. Then $P \in \Ass(R/aR) \su \Ass(R/(I:T))$ which is a contradiction by assumption.  
		
		Hence $T= (I : a)$ and $(I : T) = aR + I$. If $(I : T)$ is not equimultiple then there exists a superficial element $a \in I$ for $(I : T)$. Hence, by Lemma \ref{suploc}, we must have $(I : T) = aR + I = I$ which means $T = R$ and $R$ is Gorenstein, a contradiction. Therefore $(I : T)$ is equimultiple, and so $\Ass(R/aR) = \Ass(R/(I:T))$. As $(I:T)_P = aR_P$ for all $P \in \Ass(R/(I:T))$, this means $(I:T) = aR$ and $I \su aR$. Set $I = aJ$. Then $T = (I : a) = (aJ : a) = J$ and the proof is complete.

	(b). As $x \in K$, $xI \su K$ and so by (a) there exists $y \in \fm$ such that $xI = yK$. 
	
	 (c). Let $ T \su \fm^{t+1}$ be a maximal element of ${\mathfrak{C}_R}$. As $R/\fm$ is infinite, there exists $x \in \fm$ such that $\fm^{t+1} = x\fm^t$. Therefore $T = xJ$ for some canonical ideal $J$ of $R$, which contradicts with $T \in \Max({\mathfrak{C}_R})$.
\end{proof}


The following example shows that residue rings of  maximal canonical ideals may not have the equal lengths.

\begin{exam}\emph{(See \cite[Example 3.13]{GTP}). 
		Let $R = k[[t^3, t^4, t^5]] \su k[[t]]$. Then $\e_{\fm}^0(R) = 3$,  $R$ has minimal multiplicity with  $\fm^2 = t^3\fm$, $R$ is almost Gorenstein, and $K = (t^3, t^4)$ is a canonical ideal of $R$ with $\ell(R/K) = 2$. Hence $K$ is a maximal canonical ideal of $R$. Set $I = (t^4, t^5)$, then $t^6K = t^6(t^3, t^4) = t^5(t^4, t^5) = t^5I$ which means $I$ is a canonical ideal of $R$. As $t^5 \in I$, one has $I \nsubseteq \fm^2$ which means $I$ is a maximal canonical ideal by  Proposition \ref{MaxCanIdeal}, while  the exact sequence  $$0 \lo R/(t^3, t^5) \overset{\times t^3}{\lo} R/I \lo R/(t^3, t^5) \lo 0$$ implies that $\ell(R/I) = 4$.}
\end{exam}


	 \section{Rings with canonical reductions}
	 
	  In this section, after presenting the definition of canonical reductions for one-dimensional Cohen-Macaulay local rings, we explore the rings which admit canonical reductions. The first result concerns almost Gorenstein rings.

	  \begin{prop}\label{SFE}
	  	Assume that $(R,\fm)$ is a one dimensional almost Gorenstein local ring. Then,  for each $R$-regular element $x$, there exists a canonical ideal $J$ for $R$, depending on $x$, such that $x \in J$ and $J\fm\subseteq xR$. In particular, if $R$ has infinite residue field then $R$ admits a canonical ideal which is a reduction of $\fm$.
	  \end{prop}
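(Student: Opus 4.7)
The plan is to exploit the known structural characterization of one-dimensional almost Gorenstein rings due to Goto--Takahashi et al.\ \cite{GTP}: under the hypotheses of the proposition there exists a fractional canonical ideal $K\subseteq Q(R)$ with $R\subseteq K$ and $\fm K\subseteq R$ (in the non-Gorenstein case one in fact has $\fm K=\fm$). Invoking this equivalent reformulation of almost Gorensteinness is the main non-trivial input; the rest is a short calculation.

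Given an $R$-regular element $x\in\fm$, set $J:=xK$. Then $xK\subseteq\fm K\subseteq R$, so $J$ is an honest (integral) ideal of $R$. Multiplication by $x$ is an $R$-module monomorphism (since $x$ is regular) and defines an isomorphism $K\xrightarrow{\cong}J$, so $J$ is a canonical ideal of $R$. The containment $x\in J$ follows from $1\in K$, and
\[J\fm \;=\; x(K\fm) \;=\; x(\fm K)\;\subseteq\; xR,\]
which gives the first assertion of the proposition. (If $x$ is a unit then $R$ must already be Gorenstein, because any canonical ideal containing a unit is forced to equal $R$; in that case $J=xR$ works trivially.)

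For the ``in particular'' part, assume $R/\fm$ is infinite. Since $\dim R=1$, standard reduction theory furnishes a regular element $x\in\fm$ and an integer $n\ge 0$ with $\fm^{n+1}=x\fm^n$, i.e.\ $xR$ is a principal minimal reduction of $\fm$. Applying the first part of the proposition to this specific $x$ yields a canonical ideal $J$ with $x\in J$ and $J\fm\subseteq xR$; in particular $xR\subseteq J\subseteq\fm$. Then
\[\fm^{n+1}\;=\;x\fm^n\;\subseteq\; J\fm^n\;\subseteq\;\fm\cdot\fm^n\;=\;\fm^{n+1},\]
which forces $J\fm^n=\fm^{n+1}$, so $J$ is a reduction of $\fm$, as required.

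The only non-formal ingredient, and the step I expect to be the main obstacle for a reader unfamiliar with the almost Gorenstein literature, is producing the fractional canonical ideal $K$ satisfying $\fm K\subseteq R$; once this is available, everything else amounts to straightforward bookkeeping with the submodule $xK\subseteq R$.
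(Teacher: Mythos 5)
Your proof is correct and takes essentially the same route as the paper: both rest on \cite[Theorem 3.11]{GTP} (the paper uses it in the form $\fm K=\omega\fm$ for an integral canonical ideal $K$ with principal reduction $\omega$, you use the equivalent fractional formulation $R\subseteq K$, $\fm K\subseteq R$), both take $J$ to be the corresponding $x$-multiple of the canonical ideal, and both deduce the reduction statement from $xR\subseteq J\subseteq\fm$ with $x$ a principal reduction of $\fm$. The only cosmetic difference is that your fractional-ideal phrasing makes $x\in J$ immediate from $1\in K$, where the paper extracts it from $xK=\omega J$.
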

	  \begin{proof}  In case  $R$ is Gorenstein, the ideal  $J := xR$ satisfies the required property for any regular element $x$. So we assume that $R$ is an  almost Gorenstein ring which is not Gorenstein. Assume that $K$ is a canonical ideal of $R$ and that $\omega\in K$ is a reduction of $K$. By \cite[Theorem 3.11]{GTP}, we have $\fm K = \omega\fm$.  Assume that $x$ is a regular element of $R$. As $xK \su \omega R$ we have $xK = \omega J$ for some ideal $J$ of $R$. Hence we have $\omega J\fm = xK\fm = x\omega\fm $ which means $J\fm = x\fm $ and $J/xR$ is a vector space over $R/\fm$. Note that $J \cong K$, therefore $J$ is a canonical ideal of $R$ contains $x$.
	  	
	  	In particular, if $R/\fm$ is infinite, there is a superficial element $x \in \fm$ which is a reduction of $\fm$. Now the result follows by above discussion.
	  \end{proof}

Proposition \ref{SFE}  motivates us to investigate the class of  one-dimensional Cohen-Macaulay local rings admitting a canonical ideal which is a reduction of the maximal ideal. This class contains one-dimensional Gorenstein local rings with infinite residue fields.
	   	Now we are ready for the main definition of this section.
\begin{defn}\label{DCR1}\emph{
	  		Assume that $(R, \fm)$ is a one-dimensional Cohen-Macaulay local ring that admits a canonical ideal. A {\it canonical reduction} of $R$ is a canonical ideal $I$ of $R$ such that $I$ is a reduction of $\fm$. }
 \end{defn}
	
 Let $R$ be a one-dimensional Gorenstein ring with infinite residue field. Then $R$ admits a reduction element $x$ which is also  $R$-regular. As $xR$ is a canonical ideal of $R$, every one-dimensional Gorenstein local ring with infinite residue field admits a canonical reduction.  Also Proposition \ref{SFE} shows that every one-dimensional almost Gorenstein local ring with infinite residue field admits a canonical reduction. 
 
   There is no general information in case the residue field $R/\fm$ is finite. Note that \cite[Remark 2.10]{GTP} indicates that the ring
 	$R = k[[X, Y, Z]]/(X, Y)\cap(X, Z)\cap(Y, Z)$, where $k[[X, Y, Z]]$ is the power series ring over field $k$, 
 	 is almost Gorenstein in the sense of  \cite[Definition 3.1]{GTP}.  The ideal $I = (x+y, y+z)$ is a canonical ideal of $R$ and $\fm^2 = (x^2, y^2, z^2) = I\fm$. This means that $I$ is a canonical reduction of $R$, while $I$ has no principle reduction when $k = \mathbb{Z}/2\mathbb{Z}$.

Let $R$ be a Cohen-Macaulay local ring, admitting a canonical ideal. Remind that $\mathfrak{C}_R$ is denoted for the set of all canonical ideals of $R$ and that  $S_{\mathfrak{C}_R}: = \{ \ell(R/K) \ | \ K \in \mathfrak{C}_R \ \}$. Note that $1 \notin S_{\mathfrak{C}_R}$ by Remark \ref{CNM}.

  \begin{prop}\label{CanRedMaxIdeal}
  	Assume that $(R,\fm)$ is a one-dimensional Cohen-Macaulay local ring with infinite residue field and that $R$ admits a canonical reduction $K$. Then
  	\begin{itemize}
  		\item [(a)]  for each canonical ideal $J$ of $R$, $\ell(R/J) \geq \ell(R/K)$ and equality holds if and only if $J$ is a reduction of $\fm$. In  particular $\min(S_{\mathfrak{C}_R}) = \ell(R/K)$.
  		\item [(b)] $\ell(R/K) \leq \emph\e_{\fm}^0(R) - \emph\redu(R) + 1$. Equality holds if and only if $R$ is almost Gorenstein.
  	\end{itemize}   	
  \end{prop}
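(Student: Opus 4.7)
The plan is, for both parts, to choose an element $x\in K$ making $xR$ a minimal reduction of $K$. Such an $x$ exists because $K$ is $\fm$-primary and $R/\fm$ is infinite, and it automatically lies in $K\setminus\fm K$. Since $K$ is a reduction of $\fm$, transitivity of reductions upgrades $xR$ to a reduction of $\fm$, so $\ell(R/xR)=\e_{\fm}^{0}(R)$ by one-dimensional Cohen--Macaulayness.

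For part (a), given an arbitrary canonical ideal $J$ of $R$, I would invoke Theorem \ref{MaxCanIdeal}(b) with the regular element $x\in K$ to produce $y\in J$ with $xJ=yK$, where moreover $yR$ is a reduction of $J$ precisely because $xR$ is a reduction of $K$. Regularity of $x$ and $y$ gives $xR/xJ\cong R/J$ and $yR/yK\cong R/K$, so computing $\ell(R/xJ)=\ell(R/yK)$ two ways yields
\[
\e_{\fm}^{0}(R)+\ell(R/J)=\ell(R/yR)+\ell(R/K).
\]
The standard bound $\ell(R/yR)=\e_{yR}^{0}(R)\geq\e_{\fm}^{0}(R)$, with equality precisely when $yR$ reduces $\fm$, then delivers $\ell(R/J)\geq\ell(R/K)$. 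Since $yR$ already reduces $J$, transitivity makes this last condition equivalent to $J$ being a reduction of $\fm$, which gives the equality criterion; the statement $\min(S_{\mathfrak{C}_R})=\ell(R/K)$ is then immediate.

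For the inequality in (b), the image of $x$ in the $k$-vector space $K/\fm K$ is nonzero, so the image of $xR$ there is the one-dimensional subspace $k\bar{x}$. Hence
\[
\mu(K/xR)=\dim_{k}K/(xR+\fm K)=\mu(K)-1=\redu(R)-1,
\]
forcing $\ell(K/xR)\geq\redu(R)-1$ and so $\ell(R/K)=\e_{\fm}^{0}(R)-\ell(K/xR)\leq\e_{\fm}^{0}(R)-\redu(R)+1$. For the equality case, $\ell(K/xR)=\mu(K/xR)$ is equivalent to $K/xR$ being a $k$-vector space, i.e., $\fm K\subseteq xR$. The key step is then to promote this inclusion to the sharper identity $\fm K=x\fm$: from the chain $x\fm\subseteq\fm K\subseteq xR$ combined with $\ell(K/\fm K)=\mu(K)=\redu(R)$ and $\ell(K/x\fm)=\ell(K/xR)+\ell(xR/x\fm)=(\redu(R)-1)+1=\redu(R)$, one reads off $\ell(\fm K/x\fm)=0$, i.e., $\fm K=x\fm$. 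By \cite[Theorem 3.11]{GTP}, this identity characterizes $R$ as almost Gorenstein. Conversely, if $R$ is almost Gorenstein, Proposition \ref{SFE} supplies a canonical reduction $K'$ with $\fm K'=x'\fm$; reversing the length identity gives $\ell(R/K')=\e_{\fm}^{0}(R)-\redu(R)+1$, and part (a) transports this value to $K$.

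The main obstacle is the equality case of (b): the bound $\ell(K/xR)\geq\redu(R)-1$ is essentially formal, but promoting the borderline conclusion $\fm K\subseteq xR$ (which is what the length equality delivers directly) to the sharper identity $\fm K=x\fm$ that triggers the Goto--Takahashi--Taniguchi characterization requires the precise double counting indicated above. Everything else is routine length bookkeeping once the reduction element $x$ has been chosen.
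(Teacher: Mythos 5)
Your proposal is correct and follows essentially the same route as the paper: for (a), you relate $K$ and $J$ through a regular element $y$ with $xJ = yK$ (the paper phrases this via an $R$-isomorphism $f\colon K \to J$ with $y=f(x)$, which is the same thing) and then compare lengths using $\ell(R/xR)=\e_{\fm}^{0}(R)\leq\ell(R/yR)$; for (b), both arguments rest on $\mu(K/xR)=\mu(K)-1=\redu(R)-1$ from $x\in K\setminus\fm K$. The only real difference is that you spell out the equality case of (b) in more detail—showing $\fm K\subseteq xR$ forces $\fm K=x\fm$ and then invoking \cite[Theorem 3.11]{GTP}—where the paper simply asserts ``$\ell(K/xR)=\redu(R)-1$, i.e.\ $R$ is almost Gorenstein'' without the intermediate bookkeeping.
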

  \begin{proof}
  	(a). By the uniqueness of canonical modules there exists an $R$-isomorphism  $f : K \lo J$. Assume that $x \in K$ is a reduction of $\fm$ and set $y = f(x)$. Then $K/xR \cong J/yR$ and we have
  	\begin{equation}\label{e1}
  		\begin{array}{lllll}
  		\ell(R/K) &= \ell(R/xR)-\ell(K/xR)\\ &=\e_{\fm}^0(R)-\ell(K/xR)\\ &\leq \ell(R/yR) - \ell(J/yR)\\ &= \ell(R/J).
  		\end{array}
  		\end{equation} 
  	Therefore $\ell(R/K) \leq \ell(R/J)$ and equality holds   if and only if $\e_{\fm}^0(R) = \ell(R/yR)$ which, by \cite[Theorem 11.3.1]{SH}, $y \in J$ and $x$ is a reduction element for $\fm$.
  	
  	(b). Using the second equality of (\ref{e1}) and the fact that $\ell(K/xR) \geq \mu(K/xR) = \redu(R) - 1$ imply the claimed inequality.	Equality holds if and only if $ \ell(K/xR) = \redu(R) - 1$, i.e. $R$ is almost Gorenstein.
  \end{proof}


By \cite[Proposition 4.1]{GGHV}, if $R$ is a $d$-dimensional generically Gorenstein Cohen-Macaulay local ring of dimension $d\geq 1$, all canonical ideals of $R$ have the same reduction number, i.e. the reduction number of a canonical ideal is an invariant denoted by $\rho(R)$ and is called the {\it canonical index} of $R$ (see \cite[Definition 4.2]{GGHV}).  It is proved, in \cite[Theorem 3.7]{GTP}, that $R$ is Gorenstein if and only if $\rho(R) = 1$.   The nearest case to Gorenstein rings by means of $\rho(R)$ is the case when $\rho(R) = 2$ which is the object of the following remark. Note that if $R$ is a one-dimensional non-Gorenstein almost Gorenstein local ring then $\rho(R) = 2$ by \cite[Theorem 3.16]{GTP}.

 \begin{rem}\label{remrho=2}
 	\emph{Assume that $(R, \fm)$ is  a one-dimensional Cohen-Macaulay local ring with infinite residue field admitting a canonical ideal $K$. Let $\rho(R) = 2$ and set $x\in K$ as a reduction of $K$. Then the ideal $\fa = (x : K)$ is independent from a particular canonical ideal $K$ and a reduction element $x$ by \cite[Theorem 2.5]{CDKM}. Moreover, by \cite[Proposition 2.3]{CDKM}, $K^2 \su xR$ which means $K^2 = xJ$ for some  proper ideal $J$ of $R$. We list some statements with sketches of their proofs. 
 		\begin{itemize}
 			\item [(i)] \emph{$K : (x : K) = J$}. 
 				By \cite[Proposition 2.3]{CDKM}, when $\rho(R) = 2$ we have $\fa = (xK : K^2) = (xK : xJ) = (K : J)$, which means $K : (x : K) = J$.
 			\item [(ii)] \emph{$J^2 = K^2 = xJ = KJ$}.  As  $K^2 = xJ$, so $x^2J^2 = K^4 = xK^3 = x^2K^2$ which means $J^2 = K^2 = xJ$.				
 			\item [(iii)] \emph{ $(K:\fm^i) \su J$ if and only if $\fa \su \fm^i + K$. In particular $(K : \fm) \su J \subset \fm$}. If $J = K$, then $R$ is Gorenstein, a contradiction. Therefore, As $(K : J) \su \fm$,  we get $(K:\fm) \su K:(K:J) = J$.
 			\item [(iv)]	\emph{ $R$ is almost Gorenstein if and only if $J = (K : \fm) = (x:\fm)$}. By \cite[Proposition 2.3 (4)]{CDKM}, $R$ is almost Gorenstein if and only if $\fm K^2 \su xK$, which is equivalent to say $\fm J \su K$. In this case, as $\fm K = \fm x$ by \cite[Theorem 3.11]{GTP}, we have $\fm J = \fm x$ and so $J\su (x:\fm)\su (K:\fm) \su J$ which means $J = (K:\fm) = (x:\fm)$.
 		\end{itemize}  }
 	\end{rem}

The following result shows that the $\min(S_{{\mathfrak{C}_R}})$ assigns canonical reduction when it equals to two.

 \begin{prop}\label{Min(S_c)=2}
	Let $(R, \fm)$ be a non-regular one-dimensional Cohen-Macaulay local ring admitting a canonical ideal.
	\begin{itemize}		
		\item [(a)] \emph{(See \cite[Lemma 3.6]{CHV})} If $K$ is a canonical ideal of $R$ then $\fm K = \fm(K : \fm)$.		
		\item [(b)] Let $\min( S_{\mathfrak{C}_R})= 2$. Then $R$ has a canonical reduction.
		\item [(c)]  Let $\rho(R) = 2$ and $\min( S_{\mathfrak{C}_R}) \leq 3$. Then $R$ is almost Gorenstein.	
	\end{itemize}
\end{prop}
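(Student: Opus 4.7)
The plan is to establish (a) as the main technical lemma and then derive (b) and (c) from it by length-counting together with the structural identities of Remark~\ref{remrho=2}.

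For (a), write $K:\fm = K + Rz$ with $z \in \fm$ regular; this is possible because $(K:\fm)/K \cong \Soc(R/K) \cong R/\fm$ has length one, using that $R/K$ is zero-dimensional Gorenstein. The identity $\fm K = \fm(K:\fm)$ then reduces to the inclusion $\fm z \subseteq \fm K$. The colon ideal $\fm K : \fm$ is sandwiched between $K$ and $K:\fm$, so by the length-one extension it must coincide with one of them; the non-regularity of $R$ (equivalently $\mu(\fm) \geq 2$) is used to exclude $\fm K : \fm = K$, forcing the desired containment. This extends the duality argument of \cite[Lemma 3.6]{CHV}, which already handles a particular canonical ideal.

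For (b), suppose $\ell(R/K) = 2$. Then $R/K$ is Artinian Gorenstein of length two, so $\fm/K$ has length one and $(\fm/K)^2 = 0$, i.e., $\fm^2 \subseteq K$. Hence $\fm \subseteq K:\fm$, and comparing with $\ell((K:\fm)/K) = 1 = \ell(\fm/K)$ forces $\fm = K:\fm$. Part (a) now gives $\fm K = \fm(K:\fm) = \fm^2$, exhibiting $K$ as a reduction of $\fm$; being canonical, $K$ is therefore a canonical reduction of $R$.

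For (c), fix a canonical $K$ of minimal colength and a reduction element $x \in K$. By Remark~\ref{remrho=2} there is an ideal $J$ with $K \subsetneq J \subseteq \fm$, $K^2 = xJ$, $J^2 = xJ$, and $K:\fm \subseteq J$. If $\min(S_{\fC}) = 2$, part (b) gives $\fm K = \fm^2$; since $\ell(\fm/K) = 1$, the only possibility is $J = \fm$, so $K^2 = x\fm$. Then $K^3 = x\fm K = x\fm^2$, while $xK^2 = x^2 \fm$; combining with $K^3 = xK^2$ (from $\rho(R) = 2$) yields $\fm^2 = x\fm$, i.e., minimal multiplicity. Therefore $\fm K = \fm^2 = x\fm \subseteq xR$, which is the almost-Gorenstein criterion. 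If $\min(S_{\fC}) = 3$, then $\ell(\fm/K) = 2$ and $\ell((K:\fm)/K) = 1$, so the chain $K \subsetneq K:\fm \subseteq J \subseteq \fm$ leaves only $J = K:\fm$ or $J = \fm$. The case $J = \fm$ repeats the calculation to give $\fm^2 = x\fm$ and hence $\fm K \subseteq xR$. In the case $J = K:\fm$, the biduality $K:(K:\fm) = \fm$ for canonical ideals in one-dimensional Cohen-Macaulay rings (cf.\ \cite[Section 3.3]{BH}) identifies $\fa = K:J = \fm$, and the definition $\fa = (x:K)$ translates this to $\fm K \subseteq xR$. In every case $R$ is almost Gorenstein.

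The main obstacle is part (a): although the reduction to $\fm z \subseteq \fm K$ is clean and the colon $\fm K : \fm$ is pinned down to one of two possibilities, excluding the wrong one relies on the non-regularity of $R$ in a subtle way (already delicate in the special case of \cite[Lemma 3.6]{CHV}). Once (a) is in hand, (b) and (c) are routine manipulations with colons, lengths, and the identities of Remark~\ref{remrho=2}.
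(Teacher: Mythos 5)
Your parts (b) and (c) are essentially correct and close to the paper's argument: (b) is identical (length two forces $\fm=K:\fm$, then apply (a)), and in (c) your case analysis via Remark~\ref{remrho=2} works, the only slip being that in the subcase $\min(S_{\mathfrak{C}_R})=3$, $J=\fm$, "repeating the calculation" yields $\fm K=x\fm$ rather than $\fm^2=x\fm$ (one gets $K^3=xK\fm$ and $xK^2=x^2\fm$, so $\rho(R)=2$ gives $K\fm=x\fm$) --- which is still exactly the almost Gorenstein criterion, so the conclusion stands.

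The genuine gap is in part (a), on which (b) and (c) both depend. Your reduction is fine: $(K:\fm)/K\cong\Soc(R/K)$ has length one, so $(\fm K:\fm)$, being sandwiched between $K$ and $K:\fm$, equals one of them, and the claim is equivalent to excluding $(\fm K:\fm)=K$. But you never exclude it; you only assert that non-regularity "is used" for this and then concede it is "the main obstacle." That exclusion \emph{is} the content of (a), and it does not follow from $\mu(\fm)\geq 2$ by any formal manipulation you have set up. The paper closes exactly this gap numerically: from the exact sequence $0\to\fm(K:\fm)/\fm K\to K/\fm K\to(K:\fm)/\fm(K:\fm)\to(K:\fm)/K\to 0$ one gets $\fm K=\fm(K:\fm)$ iff $\mu(K:\fm)=\mu(K)+\redu(R/K)=\redu(R)+1$; Lemma~\ref{mimu} (a Matlis duality computation) gives $\mu(K:\fm)=\redu(\fm)$; and for a regular $x\in\fm$ one computes $\redu(\fm)=\ell((x\fm:\fm)/x\fm)=\mu(x:\fm)=\redu(R)+1$, where the step $(x\fm:\fm)=(x:\fm)$ is precisely where non-regularity enters (it amounts to $\fm^{-1}\fm=\fm$, which fails for a discrete valuation ring). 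You need to supply an argument of this kind --- or an equivalent one --- before (a), and hence (b) and (c), can be considered proved.
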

\begin{proof}
	Let $I$ be a height one ideal of $R$. From the natural exact sequence $$0 \lo \fm(I : \fm)/\fm I \lo I/\fm I \lo (I : \fm)/\fm(I : \fm) \lo (I : \fm)/I \lo 0,$$ it follows that $\fm I = \fm(I : \fm)$ if and only if $\mu(I : \fm) = \mu(I) + \redu(R/I)$. 
	
	(a). By Lemma \ref{mimu}, $\mu(K : \fm) = \redu(\fm)$. Choose an $R$-regular element $x \in \fm$. Then 	$$\redu(\fm) =\ell(\Ext_R^1(R/\fm , \fm))= \ell(\Hom_R(R/\fm , \fm/x\fm)) = \ell((x\fm : \fm)/x\fm).$$ As $R$ is not regular, we have $(x\fm : \fm) = (x : \fm)$ and thus $\fm(x : \fm) = x\fm$. Therefore $\redu(\fm) = \ell((x\fm : \fm)/x\fm) =  \ell((x : \fm)/x\fm) = \mu(x : \fm)$. Note that $\redu(R)=\redu(R/xR)=\ell(\Hom_R(R/\fm, R/xR))=\mu((x:\fm)/xR)=\mu(x:\fm)-1$. Therefore, by the above discussion, we have $\fm K=\fm(K:\fm)$.	
	
	(b). Assume that $K$ is  a canonical ideal of $R$ with $\ell(R/K) = 2$. It follows that $\fm = (K : \fm)$.	Therefore  by (a), we have $\fm^2 = \fm(K : \fm) = \fm K$ and so $K$ is a reduction of $\fm$.
	
	(c).  Assume that $K \in \mathfrak{C}_R$ with $\ell(R/K) = \min( S_{\mathfrak{C}_R})$. By Remark \ref{remrho=2} (iii), $K^2 = xJ$, where $x\in K$ is a reduction of $K$, and $(K : \fm) \su J \su \fm$. Therefore $J = (K:\fm)$ in both case and $R$ is almost Gorenstein by Remark \ref{remrho=2} (iv). 
\end{proof}

 By Proposition \ref{Min(S_c)=2} (a), canonical ideals of a one-dimensional non-regular Cohen-Macaulay local rings cannot be integrally closed. Next  we  deal with the same situation when $\dim R > 1$. First we prove a lemma
  
l \begin{lem}\label{LemInt}
 	Assume that $(R,\fm)$ is a $d$-dimensional non-Gorenstein  Cohen-Macaulay local ring, admitting a canonical ideal $K$. The following conditions are equivalent.
 	\begin{itemize}
 		\item [(i)] $K$ is  integrally closed
 		\item [(ii)]  $R/K$ is reduced.
 	\end{itemize}
 	When this is the case, $R_P$ is regular for all $P \in \emph\Ass_R(R/K)$.
 \end{lem}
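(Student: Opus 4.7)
The plan is to dispatch (ii) $\Rightarrow$ (i) immediately from the general inclusion $\overline{K}\subseteq\sqrt{K}$, and to reduce (i) $\Rightarrow$ (ii) together with the ``when this is the case'' clause to the one-dimensional situation by localizing at the associated primes of $R/K$, where Proposition~\ref{Min(S_c)=2}(a) and the determinantal trick for integral dependence can be brought to bear.

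For (ii) $\Rightarrow$ (i), I would just use that $\overline{I}\subseteq\sqrt{I}$ holds for any ideal in a Noetherian ring: if $R/K$ is reduced then $K=\sqrt{K}$, so $\overline{K}\subseteq K$ and $K$ is integrally closed.

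For (i) $\Rightarrow$ (ii), first note that $K$ being maximal Cohen-Macaulay together with the depth lemma applied to $0\to K\to R\to R/K\to 0$ gives $\depth R/K=d-1$, so $R/K$ is Cohen-Macaulay of dimension $d-1$ and $\Ass_R(R/K)=\Min_R(R/K)$ consists of height-one primes. Fix any such $P$: then $R_P$ is a one-dimensional Cohen-Macaulay local ring, $KR_P$ is a canonical ideal of $R_P$, and $KR_P$ is integrally closed in $R_P$ since localization commutes with integral closures of ideals. The key step is to run Proposition~\ref{Min(S_c)=2}(a) in contrapositive form at $R_P$: were $R_P$ not regular, that proposition would give $PR_P\cdot KR_P=PR_P\cdot(KR_P:PR_P)$, and since the $0$-dimensional Gorenstein local ring $R_P/KR_P$ has a nonzero socle, one could pick $y\in(KR_P:PR_P)\setminus KR_P$. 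The displayed equality would then yield $y\cdot PR_P\subseteq PR_P\cdot KR_P$, and since $PR_P$ is a finitely generated faithful $R_P$-module (it contains a regular element), the determinantal trick would force $y\in\overline{KR_P}=KR_P$, a contradiction. Therefore $R_P$ is regular, which establishes the ``when this is the case'' assertion.

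To finish (i) $\Rightarrow$ (ii), since $R/K$ is Cohen-Macaulay, reducedness is equivalent to Serre's $(R_0)$, i.e.\ to $KR_P=PR_P$ for every $P\in\Ass_R(R/K)$. Given the previous step, each such $R_P$ is a DVR with uniformizer $t$ and $KR_P=t^nR_P$ for some $n\geq 1$. I expect the main obstacle to be ruling out $n\geq 2$: this cannot come from local integral closure of $KR_P$ alone (every ideal of a DVR is integrally closed), so it must use the global hypothesis that $K=\overline{K}$ in $R$ together with $R$ being non-Gorenstein. A promising route is to write $K=\bigcap_i P_i^{(n_i)}$ in terms of its minimal primes and, whenever some $n_j\geq 2$, to construct an element of $P_j^{(n_j-1)}\setminus K$ that is integral over $K$ globally via a determinantal argument with a suitable finitely generated faithful module, forcing each $n_i=1$.
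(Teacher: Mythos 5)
Your direction (ii)$\Rightarrow$(i) is correct, and slightly cleaner than the paper's (which deduces it from $K=\bigcap P$): the general inclusion $\overline{K}\subseteq\sqrt{K}$ settles it at once. Your argument that (i) forces $R_P$ to be regular for every $P\in\Ass_R(R/K)$ is also correct and is essentially the paper's own argument: localize at $P$, invoke Proposition \ref{Min(S_c)=2}(a) at the non-regular one-dimensional ring $R_P$, pick $y\in(K_P:PR_P)\setminus K_P$ from the socle of the Artinian Gorenstein ring $R_P/K_P$, and apply the determinantal trick to $y\,PR_P\subseteq PR_P\,K_P$ to contradict $\overline{K_P}=K_P$. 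Up to this point you have reproduced everything the paper actually proves.

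The gap you flag at the end --- ruling out $K_P=P^nR_P$ with $n\geq 2$ --- is genuine, and it cannot be closed: the implication (i)$\Rightarrow$(ii) is false for $d\geq 2$. Let $R=k[[s^3,s^2t,st^2,t^3]]\subseteq S=k[[s,t]]$, the cone over the twisted cubic, a two-dimensional normal Cohen--Macaulay domain of type $2$, hence non-Gorenstein. Its canonical module is $\omega_R\cong Rs+Rt\subseteq S$ (the Veronese computation of \cite{GW}), so $K:=s^2(Rs+Rt)=(s^3,s^2t)R$ is a canonical ideal of $R$. A monomial check gives $K=s^2S\cap R=P^{(2)}$ with $P=(s^3,s^2t,st^2)$; any $u\in\overline{K}$ is integral over $KS=s^2(s,t)S$, which is integrally closed in $S$, so $\overline{K}\subseteq s^2(s,t)S\cap R=K$ and $K$ is integrally closed. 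Yet $st^2\notin K$ while $(st^2)^3=s^3(t^3)^2\in K$, so $R/K$ is not reduced. Your proposed repair cannot succeed: here $n=2$ and no element of $P\setminus P^{(2)}$ is integral over $K$, since its order at the discrete valuation ring $R_P$ is $1<2$. The paper's proof commits exactly the unjustified step you were worried about: after establishing that $R_P$ is regular it asserts ``and so $K_P=PR_P$'' with no argument, and that is where it breaks. (For $d=1$ the equivalence is vacuous: Proposition \ref{Min(S_c)=2}(a) shows a canonical ideal of a non-regular one-dimensional ring is never integrally closed, and $R/K$ is a non-field Artinian local ring, never reduced.) So your write-up is honest about the one step that matters, but no completion exists; only the ``when this is the case'' clause and (ii)$\Rightarrow$(i) survive.
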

 \begin{proof}
 	(i)$\Rightarrow$(ii). let $P \in \Ass(R/K)$. If $R_P$ is not regular then $K_PPR_P = PR_P(K_P : PR_P)$ by Proposition \ref{Min(S_c)=2}, which means $\overline{K_P} \su \overline{(K_P : PR_P)}$. As $K_P$ is integrally closed, this contradiction yields that $R_P$ is regular for all $P \in \Ass(R/K)$ and so $K_P = PR_P$. Therefore, if $P \in \Ass(R/K)$, $(R/K)_P = R_P/PR_P$ and $R/K$ is reduced.
 	
 	(ii)$\Rightarrow$(i). By assumption $K_P = PR_P$ for all $P \in \Ass(R/K)$ and so $K$ is equal to intersection of all associated primes of $R/K$, which means $K$ is integrally closed. 
 \end{proof} 


\begin{cor}\label{CorInt}
	Assume that $(R,\fm)$ is a $d$-dimensional Cohen-Macaulay local ring, with infinite residue field. admitting a canonical ideal. 
	\begin{itemize}
		\item [(a)] Let $K \in \mathfrak{C}_R$ be an integrally closed canonical ideal of $R$. Then, for all canonical ideals $J$ with $K \su J$, $J$ is integrally closed.
		\item [(b)] If all canonical ideals of $R$ be integrally closed, then $R$ is reduced and integrally closed in its total rings of fractions.
	\end{itemize}
\end{cor}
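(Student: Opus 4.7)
For part (a), I would reduce to the case $K \subsetneq J$ (otherwise the conclusion is trivial). Since then $K$ is not maximal in $\mathfrak{C}_R$, Theorem~\ref{MaxCanIdeal}(a) supplies a regular element $x \in \fm$ with $K = xJ$. Given $a \in R$ satisfying an integral relation $a^n + c_1 a^{n-1} + \cdots + c_n = 0$ with $c_i \in J^i$, multiplying through by $x^n$ produces
\begin{equation*}
(xa)^n + x c_1 (xa)^{n-1} + x^2 c_2 (xa)^{n-2} + \cdots + x^n c_n = 0,
\end{equation*}
which is an integral dependence of $xa$ over $xJ = K$, since $x^i c_i \in x^i J^i = K^i$. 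Integral closedness of $K$ then gives $xa \in K = xJ$, and regularity of $x$ yields $a \in J$.

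For part (b), I would split the conclusion into (i) $R$ is reduced and (ii) $R$ is regular in codimension one; normality then follows from Serre's criterion together with $S_2$, which is automatic from Cohen-Macaulayness. For reducedness, fix any $K_0 \in \mathfrak{C}_R$ and any $R$-regular element $x \in \fm$. For each $n \geq 0$, multiplication by $x^n$ is an $R$-isomorphism $K_0 \cong x^n K_0$, so $x^n K_0$ is again a canonical ideal, hence integrally closed by hypothesis. A nilpotent element $a \in R$ is integral over every ideal (the equation $a^n = 0$ has coefficient $0 \in (x^n K_0)^i$ in each degree), so $a \in \overline{x^n K_0} = x^n K_0 \su x^n R$ for every $n$, and Krull's intersection theorem forces $a = 0$.

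For the $R_1$ condition, let $P$ be a prime of height one. By prime avoidance applied to $P$ and the associated primes of $R$ (all of height zero, by Cohen-Macaulayness), I can choose $x \in P$ regular on $R$. For any $K \in \mathfrak{C}_R$, the ideal $xK$ is a canonical ideal contained in $P$. Since $R/xK$ is Cohen-Macaulay of dimension $d-1$, its associated primes coincide with its minimal primes, all of height one; any prime strictly below $P$ is a minimal prime of $R$ and cannot contain the regular element $x$. Hence $P$ is minimal over $xK$, so $P \in \Ass(R/xK)$, and Lemma~\ref{LemInt} then yields that $R_P$ is regular.

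The principal subtlety I foresee is that Lemma~\ref{LemInt} is stated only under the assumption that $R$ is non-Gorenstein. In the Gorenstein case I would argue directly at the level of $R_P$: the localized ideal $(xK)_P$ is an integrally closed canonical ideal of the one-dimensional Cohen-Macaulay local ring $R_P$ (integral closure commuting with localization), and were $R_P$ non-regular this would contradict the observation recorded immediately after Proposition~\ref{Min(S_c)=2} that a canonical ideal of a one-dimensional non-regular Cohen-Macaulay local ring cannot be integrally closed. Thus in either case $R_P$ is regular, and $R$ is normal.
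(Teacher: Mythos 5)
Your proof is correct. Part (a) is essentially the paper's argument: the paper also invokes Theorem~\ref{MaxCanIdeal}(a) to write $K=xJ$ with $xR=(K:J)$ and then observes that $J=(K:x)$ inherits integral closedness from $K$; you merely spell out the ``multiply the integral equation by $x^n$'' computation that justifies this inheritance. (Both you and the paper quietly use Theorem~\ref{MaxCanIdeal}(a) outside its stated non-Gorenstein hypothesis; in the Gorenstein case $K$ and $J$ are principal on nonzerodivisors and the factorization $K=xJ$ is immediate, so nothing is lost.) Part (b) is where you genuinely diverge. The paper's proof is a two-line globalization of (a): for any nonzerodivisor $x\in\fm$ and any $J\in\mathfrak{C}_R$, the ideal $xJ$ is again a canonical ideal, hence integrally closed by hypothesis, hence $xR=(xJ:J)$ is integrally closed; then \cite[Proposition 1.5.2]{SH} (a ring in which every principal ideal on a nonzerodivisor is integrally closed is integrally closed in its total ring of fractions) finishes, with reducedness following from the same Krull-intersection observation you make. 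You instead verify Serre's criterion $R_1+S_2$ directly, pinning down each height-one prime $P$ as an associated prime of $R/xK$ for a suitable canonical ideal $xK\su P$ and applying Lemma~\ref{LemInt}, with a separate localization argument to cover the Gorenstein case that Lemma~\ref{LemInt} excludes. Your route is longer and needs that extra case split, but it is more informative: it exhibits the regularity of each $R_P$ explicitly and ties the corollary back to Lemma~\ref{LemInt} rather than to the colon-ideal trick. Both arguments are sound.
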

\begin{proof}
	(a). By Proposition \ref{MaxCanIdeal}(i), $K = xJ$ where $xR = (K : J)$. Therefore, as $K$ is integrally closed, $x$ and $J = (K : x)$ are integrally closed.
	
	(b). We show that every principal ideal of $R$, generated by a non-zero divisor, is integrally closed and the result will follow by \cite[Proposition 1.5.2 ]{SH}. Let $x \in \fm$ be a non-zero divisor and $J$ an arbitrary canonical ideal of $R$. Set $K = xJ$. Then by assumption $K$ is integrally closed and, by Proposition \ref{MaxCanIdeal}(i), $xR = (K : J)$ is an integrally closed ideal.
\end{proof} 

 Next proposition shows that, a ring with minimal multiplicity admits a canonical reduction if and only if it is almost Gorenstein.   Recall that $R$ is said to have  {\it minimal multiplicity} if $\mu(\fm) = \e_{\fm}^0(R)$. When $R/\fm$ is infinite, this is equivalent to saying $\fm^2 = x\fm$ for some reduction element $x\in \fm$ (e. g. see \cite[Exercise 4.6.14]{BH}). In this case, $R$ has minimal multiplicity if and only if $\e_{\fm}^0(R) = \redu(R) + 1$.

	 \begin{prop}\label{R/K=2MinMult}
	  Assume that $(R, \fm)$ is a one-dimensional Cohen-Macaulay local ring with infinite residue field and that $K$ is a canonical reduction of $R$. The following statements hold true.
	 	\begin{itemize} 
	 		\item[(a)]  $R$ has minimal multiplicity if and only if $R$ is almost Gorenstein and $\ell(R/K) = 2$.
	 	  \item[(b)] If $\rho(R) = 2$, then $R$ is almost Gorenstein with minimal multiplicity if and only if $\ell(R/K) = 2$. 	
	 	\end{itemize}
	 \end{prop}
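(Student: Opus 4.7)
The plan is to derive both parts essentially as bookkeeping from two already-established inputs: the length inequality of Proposition~\ref{CanRedMaxIdeal}(b), which says $\ell(R/K) \leq \e_{\fm}^0(R) - \redu(R) + 1$ with equality exactly when $R$ is almost Gorenstein, together with the criterion (recalled in the paragraph preceding the proposition) that $R$ has minimal multiplicity if and only if $\e_{\fm}^0(R) = \redu(R) + 1$. Combining these two statements, the quantity $\e_{\fm}^0(R) - \redu(R) + 1$ equals $2$ precisely in the minimal multiplicity situation, and this observation is what drives the proof.

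For part (a), first assume that $R$ has minimal multiplicity, so $\e_{\fm}^0(R) - \redu(R) + 1 = 2$, and Proposition~\ref{CanRedMaxIdeal}(b) yields $\ell(R/K) \leq 2$. Since by Remark~\ref{CNM} the maximal ideal is not a canonical ideal whenever $R$ is non-regular (the relevant case), we have $K \subsetneq \fm$ and hence $\ell(R/K) \geq 2$, forcing $\ell(R/K) = 2$. Equality in Proposition~\ref{CanRedMaxIdeal}(b) then delivers that $R$ is almost Gorenstein. For the converse, suppose $R$ is almost Gorenstein with $\ell(R/K) = 2$; the equality case of Proposition~\ref{CanRedMaxIdeal}(b) reads $2 = \e_{\fm}^0(R) - \redu(R) + 1$, i.e. $\e_{\fm}^0(R) = \redu(R) + 1$, which is precisely the minimal multiplicity condition.

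For part (b), one direction is immediate from part (a): almost Gorenstein together with minimal multiplicity gives $\ell(R/K) = 2$. For the converse, note that since $K$ is a canonical reduction, Proposition~\ref{CanRedMaxIdeal}(a) guarantees $\ell(R/K) = \min(S_{\mathfrak{C}_R})$, so the assumption $\ell(R/K) = 2$ becomes $\min(S_{\mathfrak{C}_R}) = 2 \leq 3$. Combined with $\rho(R) = 2$, Proposition~\ref{Min(S_c)=2}(c) yields that $R$ is almost Gorenstein, and then part (a) closes the argument by producing minimal multiplicity.

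I do not foresee any genuine obstacle; the only point requiring care is the lower bound $\ell(R/K) \geq 2$ in the forward direction of (a), which rests on $K \neq \fm$ and thus tacitly excludes the trivial regular case. With this mild convention the proposition is a formal consequence of the length inequality in Proposition~\ref{CanRedMaxIdeal}(b), the minimal-multiplicity characterization, and Proposition~\ref{Min(S_c)=2}(c), so no new computation is required.
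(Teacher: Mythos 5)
Your proposal is correct. Part (a) is essentially the paper's own argument: both use the inequality of Proposition~\ref{CanRedMaxIdeal}(b) together with $\e_{\fm}^0(R)=\redu(R)+1$ to force $\ell(R/K)\leq 2$, the fact that $\fm$ is not a canonical ideal (Remark~\ref{CNM}) to get $\ell(R/K)\geq 2$, and the equality case to conclude almost Gorensteinness; the paper just rederives the equality case by the length count $\e=\ell(R/K)+\ell(K/xR)$ rather than quoting it, and it is equally silent about the regular edge case you flag. In part (b) you diverge slightly in the converse direction: the paper argues directly, writing $K^2=xI$ with $K\subseteq I\subseteq\fm$, using $\ell(\fm/K)=1$ and non-Gorensteinness to get $I=\fm$, hence $K^2=x\fm$ and $\fm K^2=x\fm^2\subseteq xK$, which is the $\rho(R)=2$ criterion for almost Gorenstein; you instead route through Proposition~\ref{CanRedMaxIdeal}(a) to identify $\ell(R/K)$ with $\min(S_{\mathfrak{C}_R})$ and then cite Proposition~\ref{Min(S_c)=2}(c). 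Since that proposition is established earlier (via Remark~\ref{remrho=2}, i.e.\ the same $K^2=xJ$ machinery), your path is legitimate and a bit more economical in exposition, at the cost of leaning on a result whose proof in the paper is itself somewhat sketchy; the paper's inline computation is self-contained. Either way the conclusion and the final appeal to part (a) for minimal multiplicity agree.
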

	 \begin{proof}
	 	(a). Choose $x \in K$ as a reduction of $\fm$ and set $\e:= \e_{\fm}^0(R)$ and $r = \redu(R)$. Assume that $\e_{\fm}^0(R) = \redu(R) + 1$.	We have, by Proposition \ref{CanRedMaxIdeal}, $\ell(R/K) \leq \e - r + 1 = r + 1 - r + 1 = 2$. As $\ell(R/K) > 1$, $\ell(R/K) = 2$. Therefore $r + 1 = \e = \ell(R/K) + \ell(K/xR) = 2 + \ell(K/xR)$ 	
	 	which means 
	 	$\ell(K/xR) = r - 1$ and $R$ is almost Gorenstein.
	 	
	  The converse is clear as	$\e = \ell(R/xR) = \ell(R/K) + \ell(K/xR) = 2 + r - 1 = r + 1.$
	 	
	(b). We may write $K^2 = xI$ for some ideal $I$. We have $K \subseteq I \subseteq \fm$. Since $R$ is not Gorenstein and $\ell(\fm/K) = 1$, we get $K^2 = x\fm$. Therefore 
	 	$\fm K^2 = x\fm^2 \subseteq xK$ which means $R$ is almost Gorenstein.
	 \end{proof}

  By Proposition \ref{R/K=2MinMult}, if $R$ is a non-almost Gorenstein ring with minimal multiplicity, then $R$ does not have a canonical reduction. To see some examples of such rings one may consult with \cite[Example 3.2 (2), Example 3.4, Example 3.5, Example 5.9]{CDKM}. 
  
 Recall that for an $R$-module $M$, it's trace, denote by $\tr_R(M)$, define as the sum of ideals $f(M)$ where $f \in \Hom_R(M , R)$. Let $R$ admits a canonical module $K_R$. In \cite{HTS}, authors considered  $\tr(K_R)$, the trace of canonical module of $R$, and prove several propositions and define the concept of {\it nearly Gorenstein rings} . A Cohen-Macaulay local ring $R$ called nearly Gorenstein if $\fm \su \tr_R(K_R)$ where $K_R$ denote the canonical module of $R$. As a consequence, \cite[Lemma 2.1]{HTS} yields that, $R$ admits a canonical ideal if and only if $\tr(K_R)$ has an $R$-regular element.
  
 In the next result we  give some characterizations for a one dimensional Cohen-Macaulay local ring to have a canonical reduction. 

  \begin{prop}\label{LemGGL}
		Assume that $(R,\fm)$ is a non-Gorenstein one-dimensional Cohen-Macaulay local ring with infinite residue field which admits a canonical ideal. The following conditions are equivalent.
			\begin{itemize}
				\item [(i)] $R$ has a canonical reduction.
				
				\item [(ii)] $R$ has a canonical ideal $K$  such that, for some regular element $a \in K$, $(a : K)$ is a reduction of $\fm$.
				
				\item [(iii)] Any canonical ideal $I$ of $R$ admits a regular element $y \in I$ such that $(y : I)$ is a reduction of $\fm$. 
				
				\item [(iv)] $\tr(K_R)$ is a reduction of $\fm$.				
			\end{itemize}				
	\end{prop}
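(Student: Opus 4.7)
The plan is to run the cycle (i) $\Rightarrow$ (ii) $\Rightarrow$ (iii) $\Rightarrow$ (iv) $\Rightarrow$ (i), organized around two standard identifications. For a canonical ideal $K$ of $R$, write $K^{-1} := (R :_Q K)$ inside the total ring of fractions $Q$; then $K^{-1} \cong \Hom_R(K, R)$, $\tr(K_R) = K \cdot K^{-1}$, and for any regular $a \in K$ one has $a K^{-1} = (a :_R K)$, leading to the key identity
\[
a \cdot \tr(K_R) \;=\; K \cdot (a : K), \qquad (a : K) \subseteq \tr(K_R).
\]
Because $R$ is not Gorenstein, $\tr(K_R) \subseteq \fm$; otherwise $K$ would be $R$-free and $R$ Gorenstein.

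For (i) $\Rightarrow$ (ii), I take $K$ to be a canonical reduction. The infinite residue field yields a principal reduction $aR$ of $K$ with $a \in K$ regular, and combining $K^{m+1} = a K^m$ with $\fm^{N+1} = K\fm^N$ shows that $a$ is also a principal reduction of $\fm$. The inclusion $aR \subseteq (a:K) \subseteq \fm$ then forces $(a:K)$ to be a reduction of $\fm$. For (ii) $\Rightarrow$ (iii), given $K, a$ and any canonical ideal $I$, Theorem~\ref{MaxCanIdeal}(b) (via \cite[Satz~2.8]{HK}) produces a regular $y \in I$ with $yK = aI$, whence $I = (y/a) K$ in $Q$ and a direct chase of ideal quotients yields $(y:I) = (a:K)$, still a reduction of $\fm$. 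For (iii) $\Rightarrow$ (iv), the sandwich $(y:I) = y I^{-1} \subseteq I I^{-1} = \tr(K_R) \subseteq \fm$ transfers the reduction property upward to $\tr(K_R)$.

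The main obstacle is (iv) $\Rightarrow$ (i), for which I plan a generic-choice argument. Write $K^{-1} = R q_1 + \cdots + R q_s$, so $\tr(K_R) = \sum_i q_i K$, and pass to the $k$-vector space $V := \tr(K_R)/\fm\tr(K_R)$ where $k = R/\fm$. Consider the $k$-bilinear pairing
\[
\phi : (K/\fm K) \times (K^{-1}/\fm K^{-1}) \longrightarrow V, \qquad (\bar k, \bar q) \longmapsto \overline{qk}.
\]
Its image spans $V$ over $k$ because the products $q_i k_j$ generate $\tr(K_R)$. On the other hand, since $R/\fm$ is infinite and $\tr(K_R)$ is a reduction of $\fm$, a Northcott-Rees style argument shows that the set of $z \in \tr(K_R)$ whose principal ideal fails to be a reduction of $\fm$ is contained in the preimage under $\tr(K_R) \to V$ of a finite union $W_1 \cup \cdots \cup W_N$ of proper $k$-subspaces of $V$. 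Applying the classical fact that over an infinite field a linear subspace contained in a finite union of proper subspaces lies in one of them -- first to each slice $\phi(K/\fm K, \bar q) \subseteq V$, and then to the resulting subspaces $A_i := \{\bar q : \phi(K/\fm K, \bar q) \subseteq W_i\} \subseteq K^{-1}/\fm K^{-1}$ -- contradicts the span property unless the image of $\phi$ escapes $W_1 \cup \cdots \cup W_N$. This yields $q \in K^{-1}$, $k \in K$ with $qk$ a principal reduction of $\fm$; such a $qk$ is automatically a non-zero-divisor, and hence so is $q$. The canonical ideal $I := qK \cong K$ then contains the principal reduction $qkR$ of $\fm$ while sitting in $\fm$, making $I$ a canonical reduction of $R$.

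The technical heart is the bilinear-image/avoidance argument in (iv) $\Rightarrow$ (i); the other three implications reduce to manipulations of the identity $a \tr(K_R) = K(a:K)$ together with the canonical-ideal transfer in Theorem~\ref{MaxCanIdeal}(b).
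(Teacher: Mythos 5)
Your proof is correct, and on three of the four implications it is essentially the paper's argument: (i)$\Rightarrow$(ii) by sandwiching a principal reduction inside $aR\subseteq(a:K)\subseteq\fm$, (ii)$\Leftrightarrow$(iii) by transporting along an isomorphism of canonical ideals (your explicit use of Theorem~\ref{MaxCanIdeal}(b) and the computation $(y:I)=yI^{-1}=aK^{-1}=(a:K)$ just spells out what the paper dismisses as ``clear since all canonical ideals are isomorphic''), and (iii)$\Rightarrow$(iv) by the inclusion $(y:I)\subseteq II^{-1}=\tr(K_R)\subseteq\fm$. The genuine divergence is in (iv)$\Rightarrow$(i). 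The paper stays with the identity $x\tr(K_R)=K(x:K)$: choosing $x\in K$ with $K^{t+1}=xK^{t}$ and multiplying the identity by $K^{t}$ gives $K^{t+1}(x:K)=K^{t+1}\tr(K_R)$, whence $(x:K)$ is a reduction of $\tr(K_R)$ (determinant trick on the faithful module $K^{t+1}$) and hence of $\fm$, after which one invokes (ii)$\Rightarrow$(i). You instead run a general-position argument on the pairing $K/\fm K\times K^{-1}/\fm K^{-1}\to\tr(K_R)/\fm\tr(K_R)$, combining the Northcott--Rees description of non-reduction elements with two rounds of subspace avoidance over the infinite residue field to produce a single product $qk$ that is already a principal reduction of $\fm$, and then take $qK$ as the canonical reduction. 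Both routes are sound; the paper's is shorter and leans on the cited identity plus the known reduction number of $K$, while yours is self-contained for that step and has the small bonus of directly exhibiting a canonical reduction $qK$ together with a principal reduction $qkR$ inside it. If you write this up, do record explicitly that the slices $\phi(K/\fm K,\bar q)$ and the sets $A_i$ are $k$-subspaces (linearity of $\phi$ in each variable separately), since that is what licenses the avoidance lemma, and that $qk$ being a parameter of the one-dimensional Cohen--Macaulay ring $R$ is what makes $q$ invertible in the total quotient ring and hence $qK\cong K$.
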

	\begin{proof}
(i)$\Rightarrow$(ii). Assume that  $K$ is a canonical ideal of $R$ which is a reduction of $\fm$. As $R/\fm$ is infinite, there exists $x \in K$  which is a reduction of $\fm$. As $xR \subseteq (x : K)$, $(x : K)$ is also a reduction of $\fm$.
		
		(ii)$\Rightarrow$(i). By assumption, there exists $x \in (a : K)$ which is a reduction of $\fm$. As $xK \su aR$, one may write $xK = aJ$ for ideal $J$ of $R$. Therefore  $J$  is  a canonical ideal of $R$. As $J\ne R$,  $J$  is  a reduction of $\fm$.	 
		
		(ii)$\Leftrightarrow$(iii).  Clear as all canonical ideals are $R$-isomorphic.
			
			(ii)$\Leftrightarrow$(iv). (ii)$\Rightarrow$(iv) is clear as $K \su \tr(K_R)$. For converse, let $K$ be an arbitrary canonical ideal of $R$ and set $x \in K$ as a reduction of $K$ and assume that $K^{t+1} = xK^{t}$. By \cite[Proposition 3.3 (3)]{GGHHV}, we have $x\tr(K_R) = K(x:K)$. Multiplying with $K^t$ we have 
			\begin{center}
				$K^{t+1}(x:K) = xK^t\tr(K_R) = K^{t+1}\tr(K_R)$.
			\end{center}  
			which means $(x:K)$ is a reduction of $\tr(K_R)$. Now the result follows by (iii). 
	\end{proof}
	
	Let $R$ be  a one-dimensional Cohen-Macaulay ring admitting a canonical ideal. Then $R$ is generalized Gorenstein if there exists an exact sequence $0 \lo R \lo K \lo C \lo 0$, with $K \in \mathfrak{C}_R$, such that $C$ is free as $R/\fa$-module where $\fa = \Ann(C)$ (see \cite[Theorem 2.3]{GGL}).  
	
	\begin{cor}\label{CRNGGG}
		Assume that $(R,\fm)$ is a one-dimensional Cohen-Macaulay local ring, admitting a canonical ideal $K$.
	\begin{itemize}
		\item [(a)] If $R$ is nearly Gorenstein, or $R$ has self-dual maximal ideal, then $R$ has a canonical reduction.
		\item [(b)] Let $R$ be a generalized Gorenstein ring with respect to $\fa$. Then, $R$ has a canonical reduction if and only if $\fa$ is a reduction of $\fm$.
	\end{itemize}
	\end{cor}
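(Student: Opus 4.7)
The plan is to deduce all assertions from the equivalence (i)$\Leftrightarrow$(iv) of Proposition \ref{LemGGL}: $R$ has a canonical reduction exactly when $\tr(K_R)$ is a reduction of $\fm$. The Gorenstein case is immediate (a minimal reduction element of $\fm$ already generates a canonical reduction), so throughout we may assume $R$ is not Gorenstein, whence $\tr(K_R) \subsetneq R$ and in particular $\tr(K_R) \subseteq \fm$.

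For (a), nearly Gorenstein: the hypothesis $\fm \subseteq \tr(K_R)$ combined with $\tr(K_R) \subseteq \fm$ forces $\tr(K_R) = \fm$, which is trivially a reduction of itself, so Proposition \ref{LemGGL}(iv)$\Rightarrow$(i) closes the case. For the self-dual maximal ideal case my plan is to establish the same containment $\fm \subseteq \tr(K_R)$, thereby reducing to the nearly Gorenstein situation just handled. The idea is to identify $\Hom_R(\fm, K_R)$ with the fractional ideal $(K_R :_{Q(R)} \fm)$; the self-duality isomorphism $\fm \cong \Hom_R(\fm, K_R)$ then translates into a sufficient supply of $R$-linear maps $K_R \to R$, and composing these with a generating set for $\fm$ shows that every element of $\fm$ lies in the image of some $R$-linear functional on $K_R$, i.e.\ in $\tr(K_R)$.

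For (b), the strategy is to establish the identity $\tr(K_R) = \fa$ for any generalized Gorenstein ring $R$ with respect to $\fa$. Realize a canonical ideal $K$ inside $Q(R)$ and let $\omega \in K$ be the regular element effecting the embedding $R \hookrightarrow K$ from the defining sequence $0 \to R \to K \to C \to 0$. Rescaling, $\tilde K = \omega^{-1} K$ satisfies $R \subseteq \tilde K$ and $\tilde K / R \cong C \cong (R/\fa)^n$. Then a direct fractional-ideal computation yields $(R :_{Q(R)} \tilde K) = \Ann_R(\tilde K / R) = \fa$ and hence $\tr(K_R) = \tilde K \cdot (R :_{Q(R)} \tilde K) = \fa \tilde K$, reducing the whole claim to the refined identity $\fa \tilde K = \fa$. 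Once this is in hand, Proposition \ref{LemGGL}(i)$\Leftrightarrow$(iv) immediately gives ``$R$ has a canonical reduction $\Leftrightarrow$ $\fa$ is a reduction of $\fm$''.

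The main obstacle is the containment $\fa \tilde K \subseteq \fa$ in (b) (the reverse follows from $1 \in \tilde K$); the mere annihilation $\fa \cdot (\tilde K / R) = 0$ only yields the weaker conclusion $\fa \tilde K \subseteq R$. The cleanest route seems to exploit the $(R/\fa)$-freeness of $\tilde K / R$ to promote $\tilde K$ to a subring of $Q(R)$: once $\tilde K \cdot \tilde K \subseteq \tilde K$ is verified, the chain $\fa \tilde K \cdot \tilde K \subseteq \fa \tilde K \subseteq R$ forces $\fa \tilde K \subseteq (R :_{Q(R)} \tilde K) = \fa$. Establishing this birational-ring property of $\tilde K$ from the freeness hypothesis is the technical heart of the argument; everything else reduces to bookkeeping with fractional ideals.
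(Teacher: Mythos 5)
Your treatment of the nearly Gorenstein case of (a) is fine and is exactly the paper's argument ($\tr(K_R)=\fm$, then Proposition \ref{LemGGL}(iv)$\Rightarrow$(i)). The other two parts have genuine gaps. The fatal one is in (b): your ``technical heart'' is to show that $\tilde K=x^{-1}K$ (equivalently $\omega^{-1}K$ in your notation) is a subring of $Q(R)$, i.e.\ $\tilde K^2\subseteq\tilde K$. But $\tilde K^2\subseteq\tilde K$ is equivalent to $K^2=xK$, i.e.\ to the reduction number of $K$ being $1$, and by \cite[Theorem 3.7]{GTP} (quoted in Section 3: $R$ is Gorenstein iff $\rho(R)=1$) this forces $R$ to be Gorenstein. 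So for the non-Gorenstein rings the statement is about, $\tilde K$ is never a ring and the chain $\fa\tilde K\cdot\tilde K\subseteq\fa\tilde K\subseteq R$ cannot be launched; the intended conclusion $\tr(K_R)=\fa$ is left unproved. Note also that the equality $\tr(K_R)=\fa$ is stronger than what is needed: the proof of Proposition \ref{LemGGL}((iv)$\Rightarrow$(ii)) already shows, via $x\tr(K_R)=K(x:K)$ and $K^{t+1}=xK^t$, that $(x:K)=\fa$ is a reduction of $\tr(K_R)$, which together with $\fa\subseteq\tr(K_R)\subseteq\fm$ gives both directions of (b). The paper itself bypasses the trace entirely: from the defining sequence $0\to R\xrightarrow{\,1\mapsto a\,}K\to C\to 0$ one reads off $\fa=\Ann(C)=(a:K)$ with $a$ a reduction of $K$, and Proposition \ref{LemGGL}(ii)/(iii) (plus the invariance of $(a:K)$ recorded in Remark \ref{remrho=2}) immediately converts ``$\fa$ is a reduction of $\fm$'' into ``$R$ has a canonical reduction'' and back.

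The self-dual case of (a) is also not proved. You assert that the isomorphism $\fm\cong\Hom_R(\fm,K_R)\cong(K_R:\fm)$ ``translates into a sufficient supply of $R$-linear maps $K_R\to R$'' whose images cover $\fm$, but no such deduction is given: after normalizing so that $(J:\fm)=\fm$ for a canonical ideal $J$ with $\fm^2\subseteq J\subseteq\fm$, the desired containment $\fm\subseteq J\,(R:J)=\tr(K_R)$ does not follow formally from this colon identity, and establishing it is essentially the content of Kobayashi's theorem on rings with self-dual maximal ideal. The paper does not attempt this either; it cites \cite[Proposition 3.6]{K} as a black box. If you want a self-contained argument here you must reproduce (or at least precisely quote) that result rather than gesture at it.
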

	\begin{proof}
		(a). When $R$ is nearly Gorenstein, $\tr_R(K) = \fm$ and so $R$ has a canonical reduction by Proposition \ref{LemGGL} (a)(iv). The second claim follows from \cite[Proposition 3.6]{K}.
		
		(b). Follows immediately by definition and Proposition \ref{LemGGL}.  
	\end{proof}

		\begin{nota}\label{NSRem}\emph{
			Assume that $k$ is an infinite field. Let $a_1, \ldots , a_n \in \mathbb{Z}$ such that $0< a_1 < a_2 < \cdots < a_n$ and let $\Gamma = \langle a_1, a_2, \ldots , a_n \rangle$ be the numerical semigroup generated by $a_1, \ldots , a_n$. The subring $R := k[[ t^{a_1}, t^{a_2}, \ldots , t^{a_n}]] $ of the power series ring $k[[t]]$ is called  the semigroup ring of $\Gamma$ over $k$. The ring $R$ is a one-dimensional Cohen-Macaulay local domain. The maximum element of the set $\mathbb{V}: = \{ \ \alpha\in\mathbb{Z} \ | \ \alpha\notin\Gamma \ \}$, denoted by $f$, is called  the {\it Frobenius number} of $\Gamma$. The fractional ideal $T = \langle t^{-\alpha} | \ \alpha\in\mathbb{V} \ \rangle$ is a canonical module of $R$ (e.g. see \cite[Example 2.1.9]{GW} ). Next theorem characterizes numerical semigroup rings to have canonical reduction.  
			Note that $R$ is Gorenstein if and only if $\Gamma$ is symmetric, i.e. $\alpha \in \mathbb{V}$ if and only if $f - \alpha \in \Gamma$. }
	\end{nota} 
	
	\begin{thm}\label{NCR} Assume that $R$ is a semigroup ring as in the Notation \ref{NSRem}. Then $R$ has a canonical reduction if and only if $ a_1 +f - \alpha \in \Gamma$ for all $\alpha \in \mathbb{V}$. 
	\end{thm}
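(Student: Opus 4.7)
The plan is to translate the condition on canonical reductions into a condition on value semigroups. By Proposition~\ref{LemGGL}(iv), $R$ has a canonical reduction if and only if $\tr(K_R)$ is a reduction of $\fm$. Since $K_R = \langle t^{-\alpha} \mid \alpha \in \mathbb{V}\rangle$ is a monomial fractional ideal inside $k((t))$, a coefficient-wise analysis shows that the dual $K_R^{-1} := (R:K_R)$ is also monomial, and hence so is $\tr(K_R) = K_R \cdot K_R^{-1} \subseteq R$. Letting $v$ denote the $t$-adic valuation, this yields $v(K_R) = \bigcup_{\alpha \in \mathbb{V}}(-\alpha + \Gamma)$, $v(K_R^{-1}) = \{n \in \mathbb{Z} \mid n-\alpha \in \Gamma \text{ for every } \alpha \in \mathbb{V}\}$, and $v(\tr(K_R)) = v(K_R) + v(K_R^{-1})$.

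Next I would identify $(t^{a_1})$ as a minimal reduction of $\fm$. This holds because the integral closure of $R$ is $k[[t]]$, and each generator of $\fm$ is $t^{a_1}$ times an element of $k[[t]]$, which is integral over $R$. Consequently, a monomial ideal $I \subseteq R$ is a reduction of $\fm$ precisely when $a_1 \in v(I)$. The theorem therefore reduces to the combinatorial equivalence: $a_1 \in v(K_R) + v(K_R^{-1})$ if and only if $a_1+f-\alpha \in \Gamma$ for every $\alpha \in \mathbb{V}$.

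The forward direction is direct: the hypothesis places $a_1+f$ into $v(K_R^{-1})$, and since $-f \in v(K_R)$ (take $\alpha = f$), one has $a_1 = (-f)+(a_1+f) \in v(\tr(K_R))$. Equivalently, $J := t^{a_1+f}K_R$ is contained in $R$, is a canonical ideal, and contains $t^{a_1}$, hence is directly a canonical reduction. For the reverse direction, the Gorenstein case is immediate from the symmetry of $\Gamma$ (which gives $f - \alpha \in \Gamma$ for every gap $\alpha$, so $a_1 + (f-\alpha) \in \Gamma + \Gamma \subseteq \Gamma$); otherwise, decompose $a_1 = n_1 + n_2$ with $n_1 \in v(K_R)$ and $n_2 \in v(K_R^{-1})$. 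The bounds $n_1 \geq -f$ (the minimum of $v(K_R)$, attained when $\alpha = f$ and $\gamma = 0$) and $n_2 \geq a_1+f$ force equality throughout, giving $a_1 + f \in v(K_R^{-1})$, which is the claimed condition.

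The key obstacle is the bound $n_2 \geq a_1+f$ in the non-Gorenstein setting. Taking $\alpha = f$ in the defining condition of $v(K_R^{-1})$ gives $n_2 - f \in \Gamma$, hence $n_2 \geq f$; and the equality $n_2 = f$ would force $f - \alpha \in \Gamma$ for every $\alpha \in \mathbb{V}$, which is exactly the symmetry condition on $\Gamma$ and contradicts the non-Gorenstein hypothesis. Hence $n_2 - f$ is a strictly positive element of $\Gamma$, so $n_2 - f \geq a_1$. The remaining technical inputs---monomiality of the dual of a monomial fractional ideal and the identity $v(IJ) = v(I) + v(J)$ for such ideals---are routine coefficient-counting arguments and use only that $\Gamma + \Gamma = \Gamma$.
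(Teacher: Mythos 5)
Your proof is correct, and while your ``if'' direction coincides with the paper's (both exhibit $t^{a_1+f}T$ as an explicit canonical reduction containing $t^{a_1}$), your ``only if'' direction takes a genuinely different route. The paper argues directly on a given canonical reduction $I$: it writes $I=bt^fT$, invokes \cite[Corollary 2.8]{GTP} to see that $b$ is a reduction of $I$ and hence of $\fm$, normalizes $b$ so that $t^{a_1}$ appears in its support, and then reads off $a_1+f-\alpha\in\Gamma$ from $bt^ft^{-\alpha}\in R$. You instead route everything through the trace criterion of Proposition \ref{LemGGL}(iv) and a value-semigroup computation: since $K_R$, $(R:K_R)$ and $\tr(K_R)$ are monomial with $v(\tr(K_R))=v(K_R)+v((R:K_R))$, the reduction condition becomes $a_1\in v(K_R)+v((R:K_R))$, and your extremal argument ($n_1\ge -f$ always, $n_2\ge a_1+f$ in the non-Gorenstein case since $n_2=f$ would force symmetry of $\Gamma$) pins down the unique decomposition $a_1=(-f)+(a_1+f)$ and hence the numerical condition. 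Your version buys a cleaner, purely combinatorial argument that avoids the normalization of the element $b$ and as a byproduct computes $\min v(\tr(K_R))$; the paper's version is more elementary (it does not need the trace machinery) and identifies concretely which ideals can occur as canonical reductions. Two small points worth tightening in a final write-up: state explicitly that $R$ consists exactly of the power series supported on $\Gamma$ (this underlies every monomiality claim), and note that your criterion ``$I$ is a reduction of $\fm$ iff $a_1\in v(I)$'' presupposes $I\subseteq\fm$, which holds for $\tr(K_R)$ precisely because you have already split off the Gorenstein case.
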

	\begin{proof}
		For `` if " part, note that by assumption $ t^{f+a_1}T \su R$ and $t^{a_1} \in t^{f+a_1}T$. Therefore $t^{f+a_1}T$  is a canonical reduction of $\fm$. 
		
		For `` only if "  part,  assume that $I$ is a canonical reduction of $R$. 
		By Proposition \ref{MaxCanIdeal}, $xI = yT$ for some $x , y \in R$. Let $b \in I$ such that $xb = yt^{-f}$, then $y = xbt^{-f}$ and therefore $I = bt^{f}T$. As $R \su t^fT \su \overline{R}$, by \cite[Corollary 2.8]{GTP}, $b \in I$ is a reduction of $I$ . Since $I$ is a canonical reduction, $b$ is also a reduction of $\fm$.  Assume that $b = \overset{h}{\underset{i = 1}{\Sigma}}\lambda_{i}t^{g_i}$, $\lambda_i \in k$ for all $i$, and that $\fm^{s+1} = \fm^s b$. Therefore $t^{a_1(s+1)} \in \fm^s b$ which means $g_j = a_1$ for some $j$, $1\leq j \leq h$, because  $a_1 \leq a_i$ for all $i$, $1\leq i \leq n$. Without loss of generality, we can assume that $a_1 = g_1$ and hence we may write, after multiplying by $\lambda_1^{-1}$, $b = t^{a_1} +  \overset{h}{\underset{i = 2}{\Sigma}}\lambda'_{i}t^{g_i}$, $\lambda'_i \in k$ for all $i$. For any $\alpha \in \mathbb{V}$ we have, by $I = bt^f T$,  $bt^ft^{-\alpha} = t^{f+a_1 - \alpha} + \overset{h}{\underset{i = 2}{\Sigma}}\lambda'_{i}t^{f + g_i - \alpha} \su R$. The last equality means $f+a_1 - \alpha\in\Gamma$ for all $\alpha\in \mathbb{V}$, as desired. 		
	\end{proof}
	\begin{exam}\emph{
			Let $e \geq 4$ and choose $i, j \in \mathbb{N}$ with $0< j-1\leq i<e-j$. The numerical semigroup ring $$R = k[[t^e, t^{e+1}, \ldots , t^{e+i}, t^{e+i+j},t^{e+i+j+1}, \ldots , t^{2e+i+j-1}]]$$
			admits a canonical reduction by Theorem \ref{NCR}. The ideal $(t^{e+1}, t^{e+2},  \ldots , t^{e+j-1})$ is a canonical reduction of $R$. }
	\end{exam}
\begin{exam}\emph{
	Let $a \geq 3$ and $R = k[[t^a, t^{a+1}, t^{a+2}]]$. Then $f = 2a - 1$ and for all $i$, $a+3 \leq i \leq 2a-1$, $t^{i}\notin R$. In this case $R$ has a canonical reduction if and only if $a = 3, 4, 5$ or $6$.}
\end{exam}
	
  We end this section with some results about the relation between canonical reductions and Ulrich ideals. An $\fm$--primary ideal $I$ of $R$ is said to be an {\it Ulrich ideal} if $I^2 = xI$ for some regular element $x \in I$ and $I/I^2$ is a free $R/I$-module (see \cite[Definition 2.1]{GOTWK}). When $I$ is an Ulrich ideal of $R$, one has $(x : I) = I$ \cite[Corollary 2.6]{GOTWK} and the second condition is equivalent to $I/xR$ being $R/I$-free \cite[Lemma 2.3]{GOTWK}. Next theorem gives a characterization, for rings admitting some certain kind of Ulrich ideals, to have a canonical  reduction.  
 
  \begin{prop}\label{URed}
  	Assume that $(R, \fm)$ is a one-dimensional Cohen-Macaulay local ring with infinite residue field such that $R$ admits a canonical ideal. Let $I$ be an $\fm$--primary Ulrich ideal with $\mu(I) > 2$. The following conditions are equivalent.
  	\begin{itemize}
  		\item [(a)]  $R$ has a canonical reduction.
  		
  		\item [(b)] $R$ is generalized Gorenstein local  with respect to $I$, and $I$ is a reduction of $\fm$.
  	\end{itemize} 
  	When this is the case, $\rho(R) = 2$ and $I$ is the unique Ulrich ideal of $R$, which is isomorphic to $(x : K)$ for an arbitrary $K \in \mathfrak{C}_R$ and $x \in K$ as a reduction.
  \end{prop}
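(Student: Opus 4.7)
The implication $(b) \Rightarrow (a)$ is immediate from Corollary \ref{CRNGGG}(b). For the converse, fix a canonical reduction $K$ of $R$ and a reduction element $x \in K$ of $\fm$, whose existence is ensured by the infinite residue field. Write $I^{2} = yI$ with $y \in I$ regular and $(y:I) = I$, and set $n := \mu(I) > 2$, so that $I/yR \cong (R/I)^{n-1}$. Apply $\Hom_{R}(-, K)$ to the short exact sequence $0 \to yR \to I \to I/yR \to 0$: since $I$ is maximal Cohen--Macaulay, $\Ext^{1}_{R}(I, K) = 0$; since $R/I$ has finite length and $K$ has no $\fm$-torsion, $\Hom_{R}(R/I, K) = 0$; and because Ulrich ideals have Gorenstein zero-dimensional quotients, local duality gives $\Ext^{1}_{R}(R/I, K) \cong R/I$. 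Identifying $\Hom_{R}(I, K)$ with the fractional ideal $K:I$ inside the total ring of fractions, one obtains
\[
0 \longrightarrow K:I \xrightarrow{\ \cdot y\ } K \longrightarrow (R/I)^{n-1} \longrightarrow 0.
\]

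The key step, and the core technical obstacle, is to show that $K:I$ is principal as an $R$-module; once this is known, the sequence above becomes $0 \to R \to K \to (R/I)^{n-1} \to 0$ with cokernel $R/I$-free, witnessing that $R$ is generalized Gorenstein with respect to $I$. My plan for this step is to identify $I$ with $(x:K)$ up to isomorphism of $R$-modules. The canonical-reduction hypothesis, through Proposition \ref{LemGGL}, makes $(x:K)$ a reduction of $\fm$; combined with the Ulrich identities $I^{2} = yI$, $(y:I) = I$, and the hypothesis $\mu(I) > 2$, this forces $I$ to lie in the ideal class of $(x:K)$. Concretely, one first deduces $\rho(R) = 2$ by reading $K^{2} \subseteq yK$ off the displayed sequence, and then applies Remark \ref{remrho=2} to identify $I$ with the ideal $J := K:(x:K)$ studied there; the Ulrich rigidity of $I$ translated through this isomorphism forces $J$ to be principal, giving $K:I \cong R$.

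The remaining assertions are then read off. That $I$ is a reduction of $\fm$ is immediate from the identification $I \cong (x:K)$ combined with Proposition \ref{LemGGL}; the equality $\rho(R) = 2$ follows from $K^{2} = yK$ together with $R$ not being Gorenstein; and uniqueness of $I$ among Ulrich ideals with $\mu > 2$ follows by applying the same analysis to any such ideal $I'$, using that the ideal class of $(x:K)$ is an invariant of $R$ independent of the choices of $K \in \fC$ and $x$ (a consequence of Proposition \ref{CanRedMaxIdeal}(b)).
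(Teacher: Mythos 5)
Your reduction of (b)$\Rightarrow$(a) to Corollary \ref{CRNGGG}(b) is fine and agrees with the paper. The problem is with (a)$\Rightarrow$(b): the step you yourself single out as ``the core technical obstacle'' --- that $K:I$ is principal --- is not only left unproved, it is false. Grant the identification you are aiming for, namely $I=(x:K)=:\fa$ (which is what actually holds). Then $K:I=K:(x:K)$ is exactly the ideal $J$ of Remark \ref{remrho=2}, with $K^2=xJ$. If $J$ were principal, $K^2$ would be a principal ideal, so $K$ would be an invertible, hence principal, fractional ideal and $R$ would be Gorenstein; but a Gorenstein ring admits no Ulrich ideal with $\mu(I)>2$ (in that case $I/yR\cong((y):I)/yR\cong K_{R/I}$ forces $\mu(I)=2$). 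In fact the computation in the paper's proof gives $J=\fa=I$, so $\mu(K:I)=\mu(I)>2$, and your dualized sequence is $0\to \fa\xrightarrow{\,y\,}K\to C\to 0$: its kernel is never cyclic, so it can never be the generalized Gorenstein sequence $0\to R\to K\to C\to 0$. The correct witness is $0\to R\xrightarrow{\,x\,}K\to K/xR\to 0$, and the paper checks $R/\fa$-freeness of its cokernel indirectly, via the criterion from \cite{CDKM} and \cite{GGL} that $K^2/x^2R$ be $R/\fa$-free, by computing $K^2=x\fa=\fa^2$, whence $K^2/x^2R=x\fa/x^2R\cong\fa/xR$ is free precisely because $I=\fa$ is Ulrich.

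There is a second, independent gap: you never establish any containment between $I$ and $K$. The paper's argument hinges on \cite[Corollary 2.12]{GTTU}, which gives $(a:K)\subseteq I$ for every regular $a\in K$; this yields $K\subseteq I$, then the chain $I\subseteq(x:I)\subseteq(x:K)\subseteq I$ pins down $I=(x:K)$ as an equality of ideals (not merely an isomorphism), and $K\subseteq(x:K)$ gives $K^2\subseteq xR$, hence $\rho(R)=2$. Your phrases ``this forces $I$ to lie in the ideal class of $(x:K)$'' and ``the Ulrich rigidity of $I$ \dots forces $J$ to be principal'' are assertions, not arguments, and nothing you invoke supplies them. Two smaller points: the claim that $R/I$ is Gorenstein (needed for your cokernel to be $(R/I)^{n-1}$ rather than $((R/I)^{\vee})^{n-1}$) is unjustified; and the invariance of $(x:K)$ that you attribute to Proposition \ref{CanRedMaxIdeal}(b) is in fact the statement of Remark \ref{remrho=2}, quoted from \cite[Theorem 2.5]{CDKM}, and is only available once $\rho(R)=2$ has been established.
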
    
  \begin{proof} First note that if $R$ is generalized Gorenstein then $\rho(R) = 2$ (see \cite[Fact 2.2 and Theorem 2.3]{GGL}). Thus, as mentioned in Remark \ref{remrho=2}, the ideal $(x:K)$ is independent from a particular canonical ideal $K$ and a reduction element $x\in K$.  

  	(a)$\Rightarrow$(b). Let $K$ be a canonical reduction of $R$, $x \in K$ as a reduction and set $\fa := (x : K)$.  First note that if $a \in K$ be a regular element, then $(a : K) \su I$ by \cite[Corollary 2.12]{GTTU}. Therefore, as $K$ can generate by regular elements, we have $K \su I$. Hence $x \in K$ is also  a reduction of $I$, which means $I^2=xI$ and we have 
  	$I \su (x:I) \su (x:K) \su I$.
  	Therefore $K \su I = (x:K)$ and so $\rho(R) = 2$ and $I = \fa = (x:K)$ is the unique Ulrich ideal of $R$. It remains to show that $R$ is generalized Gorenstein.
  	
  	As $\rho(R) = 2$, \cite[Proposition 2.5]{CDKM} (2) and \cite[Theorem 2.3]{GGL} implies that $R$ is generalized Gorenstein if and only if $K^2/x^2R$ is free $R/\fa$--module.
  	Note that $K^2 \su xR$. By Remark \ref{remrho=2} (i), $K^2 = xJ$ for some proper ideal $J$ of $R$ where $K : (x : K) = J$. As $\fa$ is Ulrich ideal, $\fa^2 \su K$ and so $\fa = ( x : K) \su J$, which means $K^2 = x(x : K) = x\fa = \fa^2$. Now, as $\fa/xR$ is free $R/\fa$--module,  we have $K^2/x^2R = x\fa/x^2R \cong  \fa/xR$ and the result follows by \cite[Lemma 2.3]{GOTWK}.  
  	
  	(b)$\Rightarrow$(a). By assumption  $I = ( a : T)$ for some $T \in \mathfrak{C}_R$ and $a \in T$ as a reduction. Choosing $x \in I$ as a reduction element, there exists a canonical ideal $K \in \mathfrak{C}_R$ such that $xT = aK$ and so $x \in K$. As $x$ is a reduction of $I$ and $I$ is a reduction of $\fm$, it follows that $K$ is a reduction of $\fm$ and $R$ has a canonical reduction.	 	
  \end{proof} 
  
 A ring $R$ that admits an Ulrich ideal which is a reduction of $\fm$ may not possesses a canonical reduction (see \cite[Example 5.7 (3)]{GIK}).
  
  Ulrich ideals with minimum number of generators $2$, are less well known. In next section, it will be  shown that if $R$ admits a canonical reduction, then so does $R\ltimes R$, the idealization of $R$ over $R$. In particular, if $a$ is an $R$-regular element, then $aR\ltimes R$ is an Ulrich ideal of $R\ltimes R$ and $\mu(aR\ltimes R) = 2$. 
  
  The notion of 2-$\AGL$ rings is defined in \cite{CDKM} as a partial generalization for almost Gorenstein rings. If $(R, \fm)$ is a one-dimensional Cohen-Macaulay local ring admitting a canonical ideal $K$, then $R$ is 2-$\AGL$ if and only if $\rho(R) = 2$ and $\ell(K^2/xK) = 2$ for some element $x\in K$ as a reduction (see \cite[Lemma 3.1]{CDKM}).  Goto et.al studied in  \cite{GT} the 2-$\AGL$ rings which admit  Ulrich ideals $I$ with $\mu(I)=2$.
  In this connection, we state the following result.
  \begin{cor}
  	Assume that $(R, \fm)$ is a one-dimensional 2-$\AGL$ ring with infinite residue field.  If $R$ admits an  Ulrich ideal $I$ with $\mu(I) = 2$ then $R$ has a canonical reduction.	 
  \end{cor}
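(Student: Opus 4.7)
The plan is to exhibit, under the given hypotheses, an ideal which is simultaneously a reduction of $\fm$ and of the form $(x:K)$ for some canonical ideal $K$ with reduction element $x\in K$, so that Proposition~\ref{LemGGL}(ii) (equivalently Corollary~\ref{CRNGGG}(b)) supplies the desired canonical reduction.

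First I would unpack the structure provided by the 2-$\AGL$ hypothesis. By the characterization recalled just before the statement, we have $\rho(R)=2$ and $\ell(K^2/xK)=2$ for every canonical ideal $K$ and every reduction element $x\in K$. By Remark~\ref{remrho=2}, the ideal $\fa:=(x:K)$ is then independent of the choice of $K$ and $x$, satisfies $K^2=x\fa$, and fits into the generalized Gorenstein picture. Thus the problem reduces to showing that $\fa$ is a reduction of $\fm$, since Proposition~\ref{LemGGL} would then immediately yield a canonical reduction of $R$.

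Next I would invoke the results of \cite{GT} on 2-$\AGL$ rings admitting an Ulrich ideal $I$ with $\mu(I)=2$. These results describe such a pair $(R,I)$ explicitly in terms of the canonical structure of $R$: after a suitable choice of canonical ideal $K$ and reduction element $x\in K$, the Ulrich ideal $I$ can be identified (up to isomorphism and rescaling by a regular element) with the invariant $\fa=(x:K)$, and the Ulrich reduction element of $I$ can be arranged to serve as a reduction of $\fm$. Chaining these identifications through the invariance of $\fa$, one concludes that $\fa$ itself is a reduction of $\fm$. An application of Proposition~\ref{LemGGL} then completes the proof.

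The main obstacle lies in the middle step: extracting from \cite{GT} the precise identification of the Ulrich ideal $I$ with the canonical invariant $\fa$ and confirming that the natural Ulrich reduction element is in fact a reduction of $\fm$. The argument used in Proposition~\ref{URed} for $\mu(I)>2$, which relied on the inclusion $(a:K)\subseteq I$ supplied by \cite[Corollary~2.12]{GTTU}, is not available when $\mu(I)=2$; so the Ulrich structure must be exploited through the finer description of \cite{GT}, and this is the only delicate ingredient in the plan.
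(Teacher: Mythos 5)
Your overall skeleton matches the paper's: reduce to showing that the invariant $\fa=(x:K)$ is a reduction of $\fm$, note that the 2-$\AGL$ hypothesis together with the Ulrich ideal makes $R$ generalized Gorenstein with respect to $\fa$, and then conclude via Proposition \ref{LemGGL} (Corollary \ref{CRNGGG}(b)). But the step you yourself flag as ``the only delicate ingredient'' is a genuine gap, and the way you propose to fill it does not work. You hope to extract from \cite{GT} an identification of the Ulrich ideal $I$ with $\fa$ ``up to isomorphism and rescaling by a regular element'' and then transfer a reduction property across that identification. Being a reduction of $\fm$ is not preserved under isomorphism of ideals: for instance $u\fa\cong\fa$ for any regular $u\in\fm$, yet $u\fa\subseteq\fm^2$ can never be a reduction of $\fm$. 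Moreover \cite{GT} does not assert that an Ulrich ideal with $\mu(I)=2$ is a reduction of $\fm$, and the corollary's hypotheses do not assume it (contrast with Proposition \ref{URed}, where $I$ being a reduction of $\fm$ appears explicitly as part of condition (b)). So the assertion ``$\fa$ is a reduction of $\fm$'' is exactly what remains to be proved, and your plan contains no argument for it.

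The paper closes this gap with a short numerical argument that you are missing. From \cite[Proposition 3.3]{CDKM} one gets $\ell(R/\fa)=2$, and from \cite[Proposition 2.3]{GT} (which is where the hypothesis $\mu(I)=2$ enters, not through any identification of $I$ with $\fa$) one gets $\mu(\fa)=\mu(\fm)-1$ together with the freeness of $K/xR$ over $R/\fa$ needed for the generalized Gorenstein step. The exact sequence
$$0 \lo \fa/\fm\fa \lo \fm/\fm\fa \lo \fm/\fa \lo 0$$
then gives $\ell(\fm/\fm\fa)=\mu(\fa)+\ell(\fm/\fa)=(\mu(\fm)-1)+1=\ell(\fm/\fm^2)$, and since $\fm\fa\subseteq\fm^2$ this forces $\fm^2=\fa\fm$, i.e.\ $\fa$ is a reduction of $\fm$. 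You would need to supply this (or an equivalent) computation for your proof to be complete.
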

  \begin{proof} Let $J$ be a canonical ideal of $R$. Set $x \in J$ as a reduction of $J$ and $\fa = (x : J)$. As $R$ is a 2-$\AGL$ ring, $\ell(R/\fa) = 2$ by \cite[Proposition 3.3]{CDKM}.  By \cite[Proposition 2.3 (1)]{GT}, $J/xR$ is free over $R/\fa$ which means $R$ is a generalized Gorenstein local ring with respect to $\fa$. Therefore, by Proposition \ref{LemGGL}, it is enough to prove that $\fa$ is a reduction of $\fm$. By \cite[Proposition 2.3]{GT}, $\mu(\fa) = \mu(\fm) - 1$ while $\ell(R/\fa) = 2$. Hence the exact sequence $0 \lo \fa/\fm\fa \lo \fm/\fm\fa \lo \fm/\fa \lo 0$ implies that $\ell(\fm/\fm\fa) = \mu(\fm)=\ell(\fm/\fm^2)$, which implies that $\fm^2 = \fa\fm$, that is $\fa$ is a reduction of $\fm$.  		
  \end{proof}


	 \section{Canonical reductions via idealization}
	 
	 The aim of this chapter is to find a characterization for rings with canonical reduction via idealization. 	 
	Throughout this section $(R, \fm)$ is a Cohen-Macaulay local ring with $\dim R = 1$, $M$ an $R$-module. By Nagata extension $R\ltimes M$, we mean idealization of $M$ over $R$. Note that $R\ltimes M$ is a local ring whose maximal ideal is $\fm\ltimes M$.  The ring $R\ltimes M$ is Cohen-Macaulay provided $M$ is a maximal Cohen-Macaulay $R$-module. Also one has $(R\ltimes M)/(\fm\ltimes M) \cong R/\fm$, i.e. $R$ and $R\ltimes M$ has the same residue field. 
	
	Assume that $K_R$ is a canonical module of $R$. In \cite[Proposition 6.1]{GTP}, it is shown that if $I$ is an $\fm$--primary ideal of $R$, then $R\ltimes \Hom_R(I , K_R)$ admits a canonical ideal of the form $I\times L$.  In \cite[Theorem 6.3]{GTP}, a partial converse is provided i.e. if $R\ltimes M$ is almost Gorenstein then $M \cong \Hom_R(I , K_R)$ for some primary ideal $I$ with some extra conditions. 
		
		The next lemma shows that if $R\ltimes M$ has a canonical ideal $I\times L$, then $R$ admits a canonical module $K_R$ and $M \cong \Hom_R(I , K_R)$.
	 
	 \begin{lem}\label{ILemma}
	 	Assume that $(R, \fm)$ is  a one-dimensional Cohen-Macaulay local ring and that $M$ is a maximal Cohen-Macaulay $R$--module. The following statements are equivalent.
	 	\begin{itemize}
	 		\item [(i)] $R\ltimes M$ admits a canonical ideal $I\times L$.
	 		\item [(ii)]  $M$ admits a submodule $L$ which is a canonical module of $R$  such that $\emph\Hom_R(M/L , \E) \cong R/I$ for some ideal $I$  of $R$ of height one.
	 		\item[(iii)] $R$ admits a canonical module $K_R$ and $M \cong \emph\Hom_R(I , K_R)$ for some ideal $I$ of height one.
	 	\end{itemize}
	 
	 \end{lem}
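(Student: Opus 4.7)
The plan is to first identify the canonical module of the idealization $S = R\ltimes M$ and then translate ideal-theoretic information into module-theoretic information via reflexivity and local duality. The key ingredient is the formula $K_S \cong K_R \ltimes \Hom_R(M,K_R)$, which holds whenever $R$ admits a canonical module $K_R$ and $M$ is maximal Cohen--Macaulay; the $S$-action is $(r,m)\cdot(k,\eta)=(rk+\eta(m),r\eta)$, where $\eta(m)\in K_R$ is the evaluation. Note also that (i), (ii), (iii) each imply that $R$ itself admits a canonical module: in (i) this is because $R=S/(0\ltimes M)$ is a quotient of $S$, and in (ii), $L$ supplies one directly.

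For (i)$\Rightarrow$(iii), I would compare annihilators. Under the $S$-isomorphism $I\times L\cong K_R\ltimes\Hom_R(M,K_R)$, the $M$-annihilator in $I\times L$ is $\{0\}\times L$ (using that $M$ is faithful, which follows once $\Hom_R(M,K_R)$ is forced to be faithful by the isomorphism), and on the right it equals $K_R\ltimes\{0\}$; matching them yields $L\cong K_R$. Passing to the quotient by the annihilator gives an $R$-module isomorphism $I\cong\Hom_R(M,K_R)$, and applying $\Hom_R(-,K_R)$ together with the MCM reflexivity isomorphism $M\cong\Hom_R(\Hom_R(M,K_R),K_R)$ produces $M\cong\Hom_R(I,K_R)$. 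Since $\Hom_R(M,K_R)$ embeds in $R$ as an ideal containing a regular element, $I$ has height one.

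For (iii)$\Rightarrow$(ii), I would apply $\Hom_R(-,K_R)$ to $0\to I\to R\to R/I\to 0$. Because $R/I$ has finite length (as $\h I=1=\dim R$), $\Hom_R(R/I,K_R)=0$ and local duality gives $\Ext^1_R(R/I,K_R)\cong\Hom_R(R/I,\E)$, yielding
\[
0\lo K_R\lo \Hom_R(I,K_R)\lo \Hom_R(R/I,\E)\lo 0.
\]
Take $L$ to be the image of $K_R$ in $M=\Hom_R(I,K_R)$; then $M/L\cong\Hom_R(R/I,\E)$, and Matlis duality applied to the finite length module $R/I$ gives $\Hom_R(M/L,\E)\cong R/I$. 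For the converse (ii)$\Rightarrow$(iii), apply $\Hom_R(-,K_R)$ to $0\to L\to M\to M/L\to 0$; the finite length of $M/L$ (deduced from its Matlis dual being $R/I$) together with local duality yields $0\to \Hom_R(M,K_R)\to R\to R/I\to 0$, so $\Hom_R(M,K_R)\cong I$, and reflexivity of MCM modules gives $M\cong\Hom_R(I,K_R)$.

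For (iii)$\Rightarrow$(i), the plan is to exhibit $I\times L$ as a canonical ideal by constructing an explicit $S$-isomorphism with $K_S$. Using reflexivity, $K_S\cong K_R\ltimes\Hom_R(M,K_R)\cong K_R\ltimes I$; define $\Phi:K_R\ltimes I\to I\times L\subseteq S$ by $(k,i)\mapsto(i,k)$, where $k\in K_R$ is viewed in $L\subseteq M$ via the inclusion of (iii)$\Rightarrow$(ii). The main obstacle here (and the only nontrivial compatibility check in the whole argument) is verifying that $\Phi$ is $S$-linear: one must show that the idealization multiplication $(r,\phi)\cdot(i,k)=(ri,rk+i\phi)$ in $I\times L$ agrees with the evaluation-twisted action $(r,\phi)\cdot(k,i)=(rk+\phi(i),ri)$ on $K_R\ltimes I$. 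This reduces to the identity $i\phi=\phi(i)$ in $M=\Hom_R(I,K_R)$ after the identification $K_R\hookrightarrow M$, both sides sending $j\in I$ to $\phi(ij)$. With this coherence established, $I\times L$ is an ideal isomorphic to $K_S$, completing the cycle.
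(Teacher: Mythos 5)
Your approach is genuinely different from the paper's. The paper works directly with types and socles: it passes to the Artinian quotient by a regular element $x$, shows that $0\times\Soc(L/xL)$ sits inside the one-dimensional socle of $(I/xI)\times(L/xL)$ to conclude $\redu(L/xL)=1$ and hence that $L$ is canonical, gets (ii)$\Rightarrow$(iii) by applying $\Hom_R(-,L)$ and chasing annihilators, and quotes \cite[Proposition 6.1]{GTP} for (iii)$\Rightarrow$(ii). You instead import the structural formula $K_{R\ltimes M}\cong K_R\ltimes\Hom_R(M,K_R)$ (for the twisted $S$-action $(r,m)(k,\eta)=(rk+\eta(m),r\eta)$), then read off $L$ and $I$ by matching canonical submodules, and close the loop with MCM reflexivity and an explicit $S$-linear isomorphism $(k,i)\mapsto(i,k)$. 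Your route is shorter and more conceptual, at the cost of treating the canonical-module-of-an-idealization formula as a black box, whereas the paper's socle computation is self-contained.

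There is, however, a gap in your (i)$\Rightarrow$(iii) that you should repair. You identify the $0\ltimes M$-annihilator inside $I\times L$ as $\{0\}\times L$ ``using that $M$ is faithful, which follows once $\Hom_R(M,K_R)$ is forced to be faithful by the isomorphism.'' This is circular: what matching the $M$-annihilators across the $S$-isomorphism $I\times L\cong K_R\ltimes\Hom_R(M,K_R)$ actually yields, without further input, is an $R$-module isomorphism $(\Ann_R(M)\cap I)\oplus L\cong K_R$ together with $I/(\Ann_R(M)\cap I)\cong\Hom_R(M,K_R)$; faithfulness of neither $M$ nor $\Hom_R(M,K_R)$ is visible from this alone. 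The fix is short: since $I\times L$ is an ideal, $IM\subseteq L$, and $IM\neq 0$ because $I$ is $\fm$-primary while $\dim M=1$; then indecomposability of $K_R$ forces $\Ann_R(M)\cap I=0$ and $L\cong K_R$, which is what the rest of your argument needs. The remaining parts of your proposal — the duality argument for (ii)$\Leftrightarrow$(iii), and the $S$-linearity verification in (iii)$\Rightarrow$(i) via the identity $i\phi=\phi(i)$ under $K_R\hookrightarrow\Hom_R(I,K_R)$ — are correct.
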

	 \begin{proof}
	 		(i)$\Rightarrow$(ii). As $(R\ltimes M)/(I\times L) \cong (R/I)\ltimes (M/L)$ is a Gorenstein ring, $M/L$ is the canonical module of the zero dimensional ring $R/I$ and so $\Hom_R(M/L , \E) \cong R/I$. Let $x \in \fm$ be a regular element of $R$ so that $(I/xI)\times (L/xL)$ is also the canonical module of $(R/xR)\ltimes (M/xM)$. Let $\bar{t} \in \Soc(L/xL)$ for $t\in L$. Then, for every $(r, w) \in \fm\times M$, we have $(r, w)(0 , \bar{t}) = (0 , r\bar{t}) = (0 , 0)$ which means $0\times\Soc(L/xL) \su \Soc((I/xI)\times (L/xL))$. As $R/\fm \cong (R\ltimes M)/(\fm\times M)$, we have $$\vdim_{R/\fm}(\Soc(L/xL)) \leq \vdim_{R/\fm}(\Soc((I/xI)\times(L/xL)) = 1$$ which means $\redu(L/xL) = 1$ and $L$ is a canonical ideal of $R$. 
	 	
	 	(ii)$\Rightarrow$(i).  As $\Hom_R(R/I , \E) \cong  M/L$, $M/L$ is a canonical ideal of $R/I$ and therefore $R/I\ltimes M/L \cong (R\ltimes M)/(I\times L)$ is Gorenstein.  As $I\times L$ is maximal Cohen-Macaulay $R\ltimes M$--module, we need to prove $\redu(I\times L) = 1$. Choose an $R$--regular element $x \in \fm$. Then $(x , 0)$, the image of $x$ in $R\ltimes M$, is also an $R\ltimes M$--regular and $(I\times L)/x(I\times L) \cong (I/xI)\times(L/xL)$. Hence it is enough to compute $\redu((I/xI)\times(L/xL))$. Note that   $\redu((I/xI)\times(L/xL))= \vdim_{R/\fm}(\Soc((I/xI)\times(L/xL)))$. 
	 	
	 	If $(\bar{a}, \bar{b}) \in \Soc((I/xI)\times(L/xL))$ then, for each $r \in \fm$, $(r\bar{a} , r\bar{b}) = (0 , 0)$ which means $\bar{a} \in \Soc(I/xI)$ and $\bar{b} \in \Soc(L/xL)$. Assume that $\bar{a} \neq 0$. As $L/xL$ is a canonical module of $\bar{R}:=R/xR$, $\Hom_R(\bar{R}/a\bar{R} , L/xL) \neq 0$ and so there exists $\bar{l} \in L/xL$ such that $\bar{a}\bar{l} \neq 0$. Hence $(\bar{a} , \bar{b})(0 , l) = (0 , \bar{a}l) \neq (0 , 0)$ which is a contradiction because $\fm\times M$ is the maximal ideal of $R\ltimes M$ and $(\bar{a}, \bar{b}) \in \Soc((I/xI)\times(L/xL))$. Therefore $\Soc((I/xI)\times(L/xL))\cong 0\times\Soc (L/xL)$.  As $L$ is canonical module of $R$, we find that $\redu((I/xI)\times(L/xL))=\redu(L/xL)=1.$ Therefore $I\times L$ is a canonical ideal of $R\ltimes M$.
	 	
	 	(ii)$\Rightarrow$(iii). Applying $\Hom_R( - , L)$  gives the exact sequence
	 	$$ 0 \lo \Hom_R(M , L) \overset{f}{\lo} \Hom_R(L , L) \lo \Ext_R^1(M/L , L) \lo 0.$$
	 	
As $\Hom_R(L , L){\cong} R$, there exists an ideal $J$ of $R$ such that $J\cong\Hom_R(M , L)$ and $R/J \cong \Ext_R^1(M/L , L)$. Our assumption, $\Hom_R(M/L , \E) \cong R/I$, implies that $IM \su L$. If $x\in I$ is an $R$--regular element, $\Ext_R^1(M/L , L) \cong \Hom_R(M/L , L/xL) \cong \Hom_R(\Hom_R(R/I , \E) , L/xL)$.  Therefore $$J=\Ann_R(\Ext_R^1(M/L , L) ) =\Ann_R(\Hom_R(\Hom_R(R/I , \E) , L/xL))=I$$ and the result follows.

	(iii)$\Rightarrow$(ii). Follows by the result of Goto-Matsuoka-Phuong, \cite[Proposition 6.1]{GTP}.
	 \end{proof}
   
	   Lemma \ref{ILemma} shows that, in order to study idealizations of $R$ that admits canonical ideals, it is enough to consider idealizations of the form $R\ltimes \Hom_R(I , K_R)$, where $I$ is a height one ideal of $R$ and $K_R$ the canonical module of $R$. 
	   
	   The next proposition shows that when an idealization of $R$ has a canonical reduction. 
	   
	   \begin{prop}\label{IProp1}
	   Assume that $(R, \fm)$ is a one-dimensional Cohen-Macaulay local ring and $M$ is a maximal Cohen-Macaulay $R$--module and that $I$ is an ideal of $R$. The following statements are equivalent.
	   \begin{itemize}
	   	\item [(i)] $R\ltimes M$ has a canonical reduction $I \times L$ for some submodule $L$ of $M$.
	   	\item [(ii)] $M \cong \emph\Hom_R(I , K_R)$ and $I$ is a reduction $\fm$.
	   \end{itemize}  
	   \end{prop}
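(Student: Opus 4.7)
The plan is to reduce to two ingredients. The first is Lemma~\ref{ILemma}, which already supplies the equivalence between the assertion ``$R\ltimes M$ admits a canonical ideal of the form $I\times L$'' and the assertion ``$M\cong\Hom_R(I,K_R)$''; thus, once that lemma is invoked in each direction, the only remaining content is to match the reduction condition on $(R\ltimes M,\,\fm\ltimes M)$ with the reduction condition on $(R,\fm)$.

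The second ingredient is an explicit description of powers of $\fm\ltimes M$ and of their products with ideals of the special form $I\times L$. From the multiplication rule $(r,m)\cdot(s,n)=(rs,\,rn+sm)$ in the Nagata extension, a routine induction yields
\[
(\fm\ltimes M)^{n}=\fm^{n}\ltimes\fm^{n-1}M,\qquad (I\times L)(\fm\ltimes M)^{n}=I\fm^{n}\ltimes(I\fm^{n-1}M+L\fm^{n})
\]
for every $n\geq 1$, where the second formula uses the inclusion $IM\subseteq L$ forced by $I\times L$ being an ideal of $R\ltimes M$. For (i)$\Rightarrow$(ii), Lemma~\ref{ILemma} gives $M\cong\Hom_R(I,K_R)$, and comparing the $R$-components in the defining equality $(\fm\ltimes M)^{n+1}=(I\times L)(\fm\ltimes M)^{n}$ of a reduction yields $\fm^{n+1}=I\fm^{n}$, i.e., $I$ is a reduction of $\fm$. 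For (ii)$\Rightarrow$(i), Lemma~\ref{ILemma} produces a submodule $L\subseteq M$ such that $I\times L$ is a canonical ideal of $R\ltimes M$; choosing $n$ one more than the reduction number of $\fm$ with respect to $I$ gives simultaneously $\fm^{n+1}=I\fm^{n}$ and $\fm^{n}=I\fm^{n-1}$, and the latter implies $\fm^{n}M=I\fm^{n-1}M$, so both components of the defining equality hold and $I\times L$ is a reduction of $\fm\ltimes M$.

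The proof is essentially bookkeeping inside the Nagata extension, and I do not anticipate a serious obstacle. The subtle point is merely noticing that the second-component equality in the product formula is not an independent requirement: the sufficiency direction needs only that one can choose $n$ at least one greater than the reduction number of $\fm$ along $I$, and the necessity direction uses only the first-component equality. As a byproduct of this matching, the reduction number of $\fm\ltimes M$ with respect to $I\times L$ equals the reduction number of $\fm$ with respect to $I$.
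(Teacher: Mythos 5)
Your proposal is correct and follows essentially the same route as the paper: invoke Lemma~\ref{ILemma} (equivalently \cite[Proposition 6.1]{GTP}) for the canonical-ideal part, and compare components in the expansions $(\fm\ltimes M)^{t+1}=\fm^{t+1}\times\fm^{t}M$ and $(I\times L)(\fm\ltimes M)^{t}=I\fm^{t}\times(\fm^{t}L+I\fm^{t-1}M)$. The only difference is that you make explicit why the second-component equality is automatic in the sufficiency direction, a point the paper leaves implicit.
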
  
	\begin{proof}
	(i)$\Rightarrow$(ii). By Lemma \ref{ILemma}, $M \cong \Hom_R(I , K_R)$. Since $I\times L$ is  a reduction for $\fm\times M$, there exists a positive integer $t$ such that $(\fm\times M)^{t+1} = (I\times L)(\fm\times M)^t$. Note that $(\fm\times M)^{t+1} = \fm^{t+1}\times \fm^tM$ and $(I\times L)(\fm\times M)^t = I\fm^t\times (\fm^tL + I\fm^{t-1}M)$. Comparing the first summands, gives $\fm^{t+1} = I\fm^t$ that is $I$ is a reduction of $\fm$.
	
	(ii)$\Rightarrow$(i).  By \cite[Proposition 6.1]{GTP}, there exists a submodule $L$, isomorph to $K_R$, of $M$ such that $I\times L$ is a canonical ideal of $R$. Assume that $\fm^{s} = I\fm^{s-1}$. It follows that $(I\times L)(\fm\times M)^s= (\fm\times M)^{s+1}$.
	\end{proof}

	 In \cite[Theorem 6.5]{GTP} it is proved that, for a one-dimensional Cohen-Macaulay local ring $(R, \fm)$, $R\ltimes\fm$ is almost Gorenstein if and only if $R$ is almost Gorenstein. Next theorem shows that the same happens for rings with canonical reduction. Moreover, it shows that the class of rings $R$ with canonical reductions is a natural notion when we compare it with $R\ltimes R$.
	 
	 \begin{thm}\label{IdealRed}
	 	Assume that $(R, \fm)$ is a one-dimensional Cohen-Macaulay local ring. The following conditions are equivalent.
	 	\begin{itemize}
	 		\item [(i)] $R$ has a canonical reduction $K$.
	 		\item [(ii)] $R\ltimes I$ has a canonical reduction for any ideal $I$ containing $K$.
	 		\item [(iii)] $R\ltimes\fm$ has a canonical reduction.
	 		\item[(iv)] $R\ltimes R$ has a canonical reduction.
	 	\end{itemize}
	 \end{thm}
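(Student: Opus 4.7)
The plan is to use Proposition \ref{IProp1} to translate conditions (ii), (iii), and (iv) into statements about ideals of $R$ alone, then compare with (i). Two tools recur throughout: reflexivity of maximal Cohen--Macaulay modules under $\Hom_R(-,K_R)$, and the identity $\fm K = \fm (K:\fm)$ from Proposition \ref{Min(S_c)=2}(a). The latter iterates to $K \fm^n = (K:\fm)\fm^n$ for every $n \geq 1$, so a canonical ideal $K$ is a reduction of $\fm$ if and only if $(K:\fm)$ is.

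Consider (i)$\Leftrightarrow$(iv). By Proposition \ref{IProp1}, $R \ltimes R$ admits a canonical reduction $J \times L$ if and only if there is an ideal $J$ of $R$ with $R \cong \Hom_R(J, K_R)$ and $J$ a reduction of $\fm$; reflexivity forces $J \cong K_R$, so $J$ is itself a canonical ideal, and (iv) is exactly (i).

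For (i)$\Leftrightarrow$(iii), Proposition \ref{IProp1} reduces (iii) to finding an ideal $J$ with $\fm \cong \Hom_R(J, K_R)$ and $J$ a reduction of $\fm$; reflexivity gives $J \cong \Hom_R(\fm, K_R) \cong (K:\fm)$ as $R$-modules. For (i)$\Rightarrow$(iii) I would take $J = (K:\fm)$: this is an ideal of $R$ contained in $\fm$ (in the non-Gorenstein case) and a reduction of $\fm$ by the key identity. For (iii)$\Rightarrow$(i), I would write $J = \beta (K:\fm)$ for some unit $\beta$ of the total ring of fractions of $R$; then $\beta K \subseteq \beta (K:\fm) = J \subseteq R$, so $\beta K$ is an ideal of $R$, is a canonical ideal since it is isomorphic to $K$, and satisfies $\beta K \cdot \fm^n = \beta (K \fm^n) = \beta (K:\fm) \fm^n = J \fm^n = \fm^{n+1}$, exhibiting $\beta K$ as a canonical reduction of $R$.

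Finally, (ii)$\Rightarrow$(iii) follows by taking $I = \fm$ (which contains $K$ in the non-Gorenstein case). For (i)$\Rightarrow$(ii), given $I \supseteq K$ I would produce a suitable $J$: when $K \subsetneq I \subsetneq R$, set $J = (K:I)$, so $K \subseteq J \subsetneq R$ (the strict inclusion since $J = R$ would force $I \subseteq K$), hence $J \subseteq \fm$ and the sandwich $\fm^{n+1} = K \fm^n \subseteq J \fm^n \subseteq \fm^{n+1}$ exhibits $J$ as a reduction of $\fm$, with $I \cong \Hom_R(J, K_R)$ by reflexivity; the case $I = R$ coincides with (iv), and $I = K$ uses $J = xR$ for a principal reduction $x$ of $\fm$ (available under the standing infinite residue field assumption). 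The main technical subtlety sits in (iii)$\Rightarrow$(i): the abstract $R$-isomorphism $J \cong (K:\fm)$ is only a scaling by some unit $\beta$ of the total ring of fractions, not an equality, and it is precisely the identity $\fm K = \fm (K:\fm)$ that permits the reduction property of $J$ to transfer to the honest canonical ideal $\beta K$.
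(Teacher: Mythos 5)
Your argument is correct and shares the paper's skeleton: every implication is routed through Proposition \ref{IProp1}, and the reduction property is transferred between a canonical ideal and its colon via $\fm K=\fm(K:\fm)$ (Proposition \ref{Min(S_c)=2}(a)). The difference lies in how (iii)$\Rightarrow$(i) and (i)$\Rightarrow$(ii) are executed. For (iii)$\Rightarrow$(i) the paper takes the canonical reduction $I\times L$ of $R\ltimes\fm$ supplied by Lemma \ref{ILemma} and identifies $I=(L:\fm)$ exactly, via a Matlis-dual chase on $0\to\fm/L\to R/L\to R/\fm\to 0$; you instead start from an arbitrary canonical ideal $K$, obtain $J\cong(K:\fm)$ only up to a unit $\beta$ of the total quotient ring by reflexivity, and repair the ambiguity by passing to the rescaled canonical ideal $\beta K$. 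Your route skips the dual computation but silently uses the (true, and worth one line) fact that $\Hom_R(\fm,K)\cong(K:_R\fm)$ rather than some larger colon inside $Q(R)$: both modules contain $K$ with quotient of length exactly one (by $\Ext^1_R(R/\fm,K)\cong k$ on one side and $\operatorname{Soc}(R/K)\cong k$ on the other), so they coincide; similarly $(K:_{Q(R)}I)=(K:_RI)$ for $K\subseteq I$ because $qI\subseteq K$ forces $qK\subseteq K$, hence $q\in\Hom_R(K,K)=R$. For (i)$\Rightarrow$(ii), your $J=(K:I)$ is exactly the paper's ideal $T=\psi\circ j^{*}(\Hom_R(I,K))$, just named directly. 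Finally, your observation that the case $I=K$ forces $J$ to be a principal reduction of $\fm$, hence needs $R/\fm$ infinite, is a genuine point: the paper's proof asserts $T$ is proper, which tacitly excludes $I=K$ even though the theorem carries no residue-field hypothesis, so your caveat locates a small gap in the statement rather than in your argument.
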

	 \begin{proof}
	 	(i)$\Rightarrow$(ii). Let $I$ be an ideal of $R$ containing $K$, $j$ the inclusion map from $K$ to $I$. Consider the $R$-monomorphism $\Hom_R(I , K) \overset{j^*}{\lo} \Hom_R(K , K)$. As there is an $R$-isomorphism $\Hom_R(K, K)\overset{\psi}{\lo} R$, each element $f \in \Hom_R(I , K)$ corresponds to some $ r\in R$ depending on $f$. 
	 	
	 	On the other hand, for each $a \in K$, consider $f_a \in \Hom_R(I , K)$ such that $f(x) = xa$ for all $x \in I$. Therefore $j^*(f_a)$ corresponds to  $a$. In other words, $\psi \circ j^*(\Hom_R(I , K))$ contains $K$. Note that the ideal $T:=\psi \circ j^*(\Hom_R(I , K))$ is a proper ideal since $K$ is a maximal canonical ideal by Proposition \ref{CanRedMaxIdeal}. As $K$ is a reduction of $\fm$, $T$ is also a reduction  of $\fm$. Now, by Proposition \ref{IProp1}, $R\ltimes\Hom_R(\Hom_R(I , K) , K) \cong R\ltimes I$ has a canonical reduction.
	 	
	 	(ii)$\Rightarrow$(iii). Clear since $K \su \fm$.
	 	
	 	(iii)$\Rightarrow$(i). As $R\ltimes\fm$ has a canonical reduction, by Proposition \ref{IProp1}, $R$ admits the canonical module $K_R$ and there exists an ideal $I$ of $R$ of height one such that  $\fm \cong \Hom_R(I , K_R)$. Therefore, $I \cong \Hom_R(\fm , K_R)$ and $I$ is a reduction for $\fm$.  As, by \cite[Corollary 2.4]{GHI}, $\fm$ is not a canonical ideal of $R$, we have $I \ncong R$. Lemma \ref{ILemma} suggests that a canonical reduction of $R\ltimes \fm$ is of the form $I\times L$ for some proper ideal $L$ such that $L$ is a canonical ideal of $R$. Moreover $R/I \cong \Hom_R(\fm/L , \E)$. 
	 	
	 	 From the exact sequence $0 \lo \fm/L \lo R/L \lo R/\fm \lo 0$ we obtain the exact sequence $$0 \lo \Hom_R(R/\fm , \E) \lo \Hom_R(R/L , \E) \lo \Hom_R(\fm/L , \E) \lo 0.$$ As $L$ is a Gorenstein ideal of $R$ we have $ \Hom_R(R/L , \E)\cong R/L$ and so $\Hom_R(R/\fm , \E) \cong J/L$ for some ideal $J$ of $R$ containing $L$. As a result $\Hom_R(\fm/L , \E) \cong R/J$. Note that $\fm J \su L$, which means $J/L \su (L : \fm)/L \cong R/\fm$ because $R/L$ is Gorenstein. Therefore, $J = (L : \fm)$ and $\Hom_R(\fm/L , \E) \cong R/(L : \fm)$ that is $I = (L : \fm)$. As $L$ is canonical ideal of $R$, Theorem \ref{Min(S_c)=2} (a) implies that $\fm L = \fm(L : \fm)$. Putting together, we find $\fm L=\fm I$. As $I$ is a reduction of $\fm$ so is $L$.
	 	 
	 	 (i)$\Leftrightarrow$(iv). The proof is a straightforward application of Proposition \ref{IProp1}.
	 \end{proof}

	 \begin{rem}\label{Irem} \emph{Here we give a method to construct rings with canonical reduction of arbitrary large canonical index. }
	 	
	  \emph{ Choose $e \geq 4$ and $r \in \{ 2, \ldots , e-1 \}$. Then, by \cite[Example 4.4]{Sally}, there exists a one dimensional Cohen-Macaulay complete local ring $(R, \fm)$ such that $\mu(\fm) = e - 1$, $\e_{\fm}^0(R) = e$ and $\redu(R) = e - 2$, where $\re(\fm) = r$. By Proposition \ref{IProp1}, $R\ltimes\Hom_R(\fm , K_R)$ has a  canonical reduction of the form $\fm\times L$, where $K_R$ denotes the canonical module of $R$. Note that $x\fm^r=\fm^{r+1}$ for some element $x\in\fm$. It is easy to see that $(x , 0)$ is a reduction of $\fm\times L$.
	 		Now, in the following, we study the properties of the ring $A:= R\ltimes\Hom_R(\fm, K_R)$.
	 		 	\begin{itemize}
	 		 		\item[(i)] {\it Canonical index of $A$}. For any integer $t > 0$, one has $$(\fm\times L)^{t+1} = (\fm^{t+1} , \fm^tL)\ \  \text{and} \ (x , 0)(\fm\times L)^t = (x\fm^t , x\fm^{t-1}L).$$ In order to calculate the canonical index of $A$, we compare $(\fm\times L)^{t+1}$ and $(x , 0)(\fm\times L)^t$.  If $\fm^rL = x\fm^{r-1}L$, then $\re(\fm\times L) = r$. Otherwise $\re(\fm\times L) = r + 1$.
	 			\item [(ii)] {\it $A$ is not Gorenstein.} Otherwise, by \cite[Theorem 3.7]{GTP}, we equivalently have $\fm L = xL$ that is $\redu(R) = \e_{\fm}^0(R)$, a contradiction.   
	 			\item [(iii)] {\it $A$ is not generalized Gorenstein for $r>2$.} If $r > 2$, then $\re(\fm\times L) > 2$ by part (i), which means $R\ltimes\Hom_R(\fm , K_R)$ is not a generalized Gorenstein local ring \cite[Proposition 2.1]{GGL}.
	 			\item [(iv)] $\min(S_{{\mathfrak{C}_A}}) = 2$. As $K_A=\fm\times L$ is the canonical reduction of $A$, we have   $$\ell((R\ltimes\Hom_R(\fm , K_R))/(\fm\times L) = \ell((R/\fm)\ltimes(\Hom_R(\fm , K_R)/L) = 2.$$
	 		\end{itemize}   }
	 \end{rem}

			\section{Rings with canonical reductions of higher dimension}
			
			In this section, for Cohen-Macaulay local rings of higher dimensions, we define and study canonical reductions.
		
		\begin{defn}\label{CanRedHigherDim}\emph{Assume that $(R,\fm)$ is a $d$-dimensional Cohen-Macaulay local ring admitting a canonical ideal. A canonical ideal $K$ is said to be a   \emph{canonical reduction} of $R$ if there exists an equimultiple ideal $I$, with $\h(I) = d-1$ such that $K + I$ is a reduction of $\fm$.}
		\end{defn}
			
	This definition coincides with Definition \ref{DCR1} when $\dim R=1$. Note that a canonical reduction, if exists, is not necessarily equimultiple when $d > 1$.
		
	\begin{rem}\label{CanRednot}\emph{Let $R$ be a $d$-dimensional Cohen-Macaulay local ring with infinite residue field which admits a canonical ideal $K$.
	\begin{itemize}
		\item  [(a)] If $K$ admits a superficial element for $\fm$, then $R$ has a canonical reduction in the sense of Definition \ref{CanRedHigherDim} since any superficial element can be extended to a superficial sequence which is a minimal reduction of $\fm$.
		\item  [(b)] If $K$ is a canonical reduction of $R$ and $I$ is an equimultiple ideal of height $d-1$ such that $K+I$ is a reduction of $\fm$, then we may choose $\fq = (x_2, \cdots , x_d)$ as a minimal reduction of $I$ such that $\fq$ is a parameter ideal of $R$, and $\fq(R/K)$ is a minimal reduction of $\fm/K$. Therefore $\fq(R/K)$ is a parameter ideal for $R/K$ and we have $K(R/\fq) = (K + \fq)/\fq \cong K/\fq\cap K = K/\fq K$, so $K(R/\fq)$ is a canonical reduction of $R/\fq$.
	\item [(c)] Let $K$ be a canonical reduction of $R$ so that $I$ is an equimultiple ideal of height $d-1$ such that $K+I$ is a reduction of $\fm$. Assume further that $K$ is equimultiple and $x\in K$ is a reduction of $K$. If $\fq$ is chosen as in (b), then $xR + \fq$ is a minimal reduction of $\fm$.
						\end{itemize}
						 }		
				\end{rem}

	Next we show that almost Gorenstein local rings and nearly Gorenstein rings of dimension $d > 1$ have a canonical reduction. First we prove the following lemma which is fundamental in the sequel.

			\begin{lem}\label{LemSup}
				Assume that $(R,\fm)$ is a $d$-dimensional Cohen-Macaulay local ring, with infinite residue field, admitting a canonical ideal $K$. Let $y \in \fm$ be a regular element of $R$, and  let $\fq = (x_2, x_3, \cdots , x_d)$ be an ideal generated by a regular sequence for $R$, $R/K$ and $R/yR$.  Let  $x \in K \setminus \fm K$ such that $xR + \fq$ is a parameter of $R$ and set $\bar{R} = R/\fq$. If  $y\bar{R} \in (x \bar{R} :_{\bar{R}} K\bar{R})$ then there exists a canonical ideal $I$ of $R$ such that $y \in I$. 
			\end{lem}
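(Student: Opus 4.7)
The plan is to reduce to the one-dimensional case via the regular sequence $\fq$, to build the required canonical ideal in $\bar{R}:=R/\fq$ containing the image of $y$, and then to lift it back to $R$.

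To set up the reduction, first note that since $\fq$ is a regular sequence on $R$ and on $R/K$, we have $\fq\cap K=\fq K$, so $\bar{K}:=(K+\fq)/\fq$ is a canonical ideal of the one-dimensional Cohen--Macaulay local ring $\bar{R}$. The hypothesis that $xR+\fq$ is a parameter ideal of the Cohen--Macaulay ring $R$ makes $x,x_2,\ldots,x_d$ a regular sequence, so $\bar{x}$ is a non-zerodivisor on $\bar{R}$; likewise the regularity of $\fq$ on $R/yR$ forces $\bar{y}$ to be a non-zerodivisor on $\bar{R}$.

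In $\bar{R}$, the hypothesis $y\bar{R}\in (x\bar{R}:_{\bar{R}}\bar{K})$ reads $\bar{y}\bar{K}\subseteq \bar{x}\bar{R}$. Since $\bar{x}$ is regular, the ideal $\bar{I}:=\{\bar{r}\in\bar{R}\mid \bar{x}\bar{r}\in \bar{y}\bar{K}\}$ satisfies $\bar{x}\bar{I}=\bar{y}\bar{K}$, and multiplication by $\bar{x}$ gives an $\bar{R}$-linear isomorphism $\bar{I}\cong \bar{x}\bar{I}=\bar{y}\bar{K}$. Since $\bar{y}$ is regular on the torsion-free $\bar{R}$-module $\bar{K}$, multiplication by $\bar{y}$ also yields $\bar{K}\cong \bar{y}\bar{K}$. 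Composing these gives $\bar{I}\cong \bar{K}$ as $\bar{R}$-modules, so $\bar{I}$ is a canonical ideal of $\bar{R}$. Moreover $\bar{y}\in \bar{I}$, because $\bar{x}\bar{y}=\bar{y}\bar{x}\in \bar{y}\bar{K}=\bar{x}\bar{I}$ and $\bar{x}$ is regular.

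Finally, I lift $\bar{I}$ to a canonical ideal of $R$ containing $y$. Every canonical ideal of $R$ is the image $\alpha K\subseteq R$ of an injective $R$-homomorphism $K\to R$; equivalently, it comes from a regular element $\alpha$ of the fractional ideal $(R:_{Q(R)}K)$, where $Q(R)$ denotes the total ring of fractions of $R$. To get $y$ into such an image, it suffices to arrange $\alpha x=y$, for then $y=\alpha x\in \alpha K$. Modulo $\fq$, multiplication by $\bar{y}/\bar{x}\in Q(\bar{R})$ provides such an element $\bar{\alpha}\in (\bar{R}:_{Q(\bar{R})}\bar{K})$ with $\bar{\alpha}\bar{x}=\bar{y}$. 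Choose any lift $\alpha_0\in (R:_{Q(R)}K)$ of $\bar{\alpha}$; then $\alpha_0 x-y\in \fq$. The three regularity hypotheses on $\fq$ together supply a correction $\beta\in \fq\cdot(R:_{Q(R)}K)$ with $\beta x=\alpha_0 x-y$, and the corrected element $\alpha:=\alpha_0-\beta$ is still regular (since it reduces to the regular element $\bar{y}/\bar{x}$ in $Q(\bar{R})$), satisfies $\alpha x=y$, and so $I:=\alpha K$ is a canonical ideal of $R$ with $y\in I$.

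The main obstacle is verifying that $\alpha_0$ can actually be corrected so that $\alpha x=y$ holds in $R$ and not merely in $\bar{R}$, i.e., that the obstruction lies in $\fq\cdot(R:_{Q(R)}K)\cdot x$ rather than only $\fq$. This is exactly where the regular-sequence hypotheses on $\fq$ modulo $R$, $R/K$ and $R/yR$ enter, in combination with the explicit description of $(R:_{Q(R)}K)$ as the canonical fractional dual of $K$.
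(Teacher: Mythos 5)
Your reduction modulo $\fq$ is correct and coincides with the paper's: $\fq\cap K=\fq K$ makes $\bar{K}=(K+\fq)/\fq$ a canonical ideal of $\bar{R}$, and $\bar{I}=(\bar{y}\bar{K}:_{\bar{R}}\bar{x})$ is a canonical ideal of $\bar{R}$ containing $\bar{y}$. The lifting step, however, is not merely incomplete; as formulated it cannot work. You seek a regular $\alpha\in(R:_{Q(R)}K)$ with $\alpha x=y$. Since $x$ is part of a system of parameters of the Cohen--Macaulay ring $R$, it is a nonzerodivisor, hence a unit of $Q(R)$, so $\alpha x=y$ has the \emph{unique} solution $\alpha=yx^{-1}$ in $Q(R)$; and $yx^{-1}\in(R:_{Q(R)}K)$ if and only if $yK\subseteq xR$ holds in $R$ itself, whereas the hypothesis only gives $yK\subseteq xR+\fq$. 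Consequently no correction $\beta$ with $\beta x=\alpha_0x-y$ can exist unless $yK\subseteq xR$ already holds, and your last paragraph is asking the regular-sequence hypotheses to deliver something that is generally false, not just unproved. A second, independent gap is the phrase ``choose any lift $\alpha_0\in(R:_{Q(R)}K)$ of $\bar{\alpha}$'': this presupposes surjectivity of the natural map $\Hom_R(K,R)\to\Hom_{\bar{R}}(\bar{K},\bar{R})$, whose cokernel is controlled by $\fq$-torsion in $\Ext_R^1(K,R)$ and which you do not address. The correct target is only $y\in\alpha K$, i.e.\ $y=\alpha k$ for \emph{some} $k\in K$, not necessarily $k=x$.

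The paper avoids both problems by lifting the ideal rather than the homomorphism: it takes the ideal $I$ of $R$ with $y\in I$ and $xI\bar{R}=yK\bar{R}$, shows by an associated-primes argument that $x_2,\dots,x_d$ is a regular sequence on $R/xI$, deduces that $I$ is maximal Cohen--Macaulay with $I/\fq I\cong xI(R/\fq)=yK(R/\fq)\cong K/\fq K$, and then concludes $I\cong K$ from the standard fact that a maximal Cohen--Macaulay module whose reduction modulo a regular sequence is the canonical module of the quotient is itself a canonical module. To salvage your argument you should replace ``find $\alpha$ with $\alpha x=y$'' by such a descent identification of $I$ with $K$ (any isomorphism $K\cong I$ is multiplication by some regular $\alpha\in(R:_{Q(R)}K)$ with $y=\alpha k$ for some $k\in K$, but that $k$ will in general differ from $x$).
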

			\begin{proof}
				Note that $K(R/\fq) = (K + \fq)/\fq \cong K/\fq K$ is a  canonical ideal of $R/\fq$ since $x_2, \cdots, x_d$ is regular over $R/K$. 	Assume that $I$ is the ideal of $R$ such that $y\in I$ and  $yK\bar{R} = xI\bar{R}$. We show that $I$ is a canonical  ideal of $R$. As $y$ is regular over $R/\fq$,  $yK\bar{R}$ and so $xI\bar{R}$ are canonical modules for $\bar{R}$. We claim that	$x_2, \cdots, x_d$ is a regular sequence on $R/xI$ so that $R/xI$ is Cohen-Macaulay and $\dim(R/xI) = d - 1$. To see this, assume that $x_2$ is not regular over $R/xI$. Then $x_2 \in \fp$ for some $\fp\in \Ass_R(R/xI)$. Therefore $\fp = (xI : \alpha)$ for some $\alpha \in R$. As
				$x_2 \in \fp$, we have $x_2\alpha \subset xI$. Note that $(x , x_2)$ is a regular sequence on $R$, so we must have $\alpha \in xI$ which is a contradiction. Continuing this way, since $(x , x_2, \ldots , x_i)$ is $R$-sequence for each $i$, $2 \leq i \leq d$, we see that $x_2, x_3, \ldots, x_d$ is a regular sequence for $R/xI$.
				
			Therefore,  we have $I/\fq I \cong xI/\fq xI \cong xI/\fq\bigcap xI \cong (xI + \fq)/\fq = xI(R/\fq) = yK(R/\fq)$ which means $I/fq I$ is a canonical module of $R/\fq$. Also the exact sequence $ 0 \lo xI \lo R \lo R/xI \lo 0$ implies that $xI$ is maximal Cohen-Macaulay and $\dim(xI) = d = \depth(xI)$. Therefore $I$ is a maximal Cohen-Macaulay module while $\fq$ is a regular sequence for $I$, and $I/\fq I$ is a canonical module of $R$. Therefore $I$ is canonical ideal of $R$.			
			\end{proof}

					Let $I$ be arbitrary ideal of $R$ with positive height. An element $x \in R$ is called a {\it reduction element for $I$} if there exist $x_2, \cdots , x_t \in R$ such that $(x, x_2, \cdots , x_t)$ is a minimal reduction of $I$.   Next result is a direct corollary from Proposition \ref{LemGGL} and Lemma \ref{LemSup}.
					
	\begin{prop}\label{CanRedNG}
						Assume that $(R,\fm)$ is a Cohen-Macaulay local ring, admitting a canonical ideal, such that $\emph\tr(K_R)$ admits a reduction element for $\fm$. Then $R$ admits a canonical reduction. In particular, every nearly Gorenstein ring admits a canonical reduction. 
\end{prop}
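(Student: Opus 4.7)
The strategy is to reduce to dimension one by modding out a well-chosen parameter ideal of height $d-1$, apply Proposition~\ref{LemGGL} to the quotient, and lift the canonical reduction back to $R$ via Lemma~\ref{LemSup}. After passing to $R(X) := R[X]_{\fm R[X]}$ if necessary, I may assume $R/\fm$ is infinite. Fix a canonical ideal $K$ of $R$ and, by hypothesis, a reduction element $y \in \tr(K_R)$ of $\fm$; extend $y$ to a minimal reduction $(y, x_2, \ldots, x_d)$ of $\fm$ with $d = \dim R$, choosing $x_2, \ldots, x_d$ as sufficiently generic linear combinations of a minimal system of generators of $\fm$ so that they form a regular sequence on $R$, $R/K$, and $R/yR$. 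Set $\fq = (x_2, \ldots, x_d)$ and $\bar{R} = R/\fq$; then $\bar{R}$ is a one-dimensional Cohen-Macaulay local ring with infinite residue field, $K\bar{R} \cong K/\fq K$ is a canonical ideal of $\bar{R}$, and $y\bar{R}$ is a principal reduction of $\fm\bar{R}$.

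Writing $y = \sum_i f_i(\kappa_i)$ with $f_i \in \Hom_R(K, R)$ and $\kappa_i \in K$, the induced maps $\bar{f}_i \in \Hom_{\bar{R}}(K\bar{R}, \bar{R})$ give $\bar{y} \in \tr_{\bar{R}}(K\bar{R})$, so $\tr_{\bar{R}}(K\bar{R}) \supseteq y\bar{R}$ is itself a reduction of $\fm\bar{R}$. The proof of Proposition~\ref{LemGGL}(iv)$\Rightarrow$(ii) applied to $\bar{R}$ shows that any principal reduction $\bar{a} \in K\bar{R}$ satisfies the property that $(\bar{a}\bar{R} :_{\bar{R}} K\bar{R})$ is a reduction of $\fm\bar{R}$; choose such $\bar{a}$ (which exists since $\bar{R}/\fm\bar{R}$ is infinite), and note that being a principal reduction forces $\bar{a} \notin \fm K\bar{R}$. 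Lift $\bar{a}$ to $a \in K$. Because $\fq$ is $R/K$-regular, $\fq \cap K = \fq K$, so $\bar{a} \notin \fm K\bar{R}$ translates into $a \notin \fm K$; and since $\bar{a}$ is regular in $\bar{R}$, $aR + \fq$ is a parameter ideal of $R$. Next, pick $\bar{y}' \in (\bar{a}\bar{R} :_{\bar{R}} K\bar{R})$ that is a reduction of $\fm\bar{R}$ and lift it to $y' \in R$. Then $(y', x_2, \ldots, x_d)$ is a system of parameters of $R$, so $x_2, \ldots, x_d$ is an $R/y'R$-regular sequence, and the hypotheses of Lemma~\ref{LemSup} with data $K$, $a$, $\fq$, $y'$ are all satisfied. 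The lemma delivers a canonical ideal $I \subseteq R$ containing $y'$.

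To conclude, since $\bar{y}'$ is a principal reduction of $\fm\bar{R}$ and $\fq$ is an $R$-regular sequence, multiplicity associativity gives $\ell(R/(y'R + \fq)) = \ell(\bar{R}/\bar{y}'\bar{R}) = \e_{\fm\bar{R}}^0(\bar{R}) = \e_\fm^0(R)$, so the parameter ideal $y'R + \fq$ is a reduction of $\fm$; hence $I + \fq \supseteq y'R + \fq$ is a reduction of $\fm$ as well. Since $\fq$ has height $d-1$ and is generated by $d-1$ elements, it is equimultiple, and $I$ is a canonical reduction of $R$ in the sense of Definition~\ref{CanRedHigherDim}. The ``in particular'' claim is immediate: a nearly Gorenstein ring satisfies $\fm \subseteq \tr(K_R)$, so any reduction element of $\fm$ (available after passing to $R(X)$) automatically lies in $\tr(K_R)$. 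The main obstacle is coordinating the genericity choices so that every superficial condition holds at once and the later-chosen $y'$ remains compatible with the already-fixed $\fq$; a secondary subtlety is reading from the proof of Proposition~\ref{LemGGL}(iv)$\Rightarrow$(ii) the sharper statement that the element $\bar{a}$ may be taken as a principal reduction of $K\bar{R}$, which is exactly what guarantees $\bar{a} \notin \fm K\bar{R}$ and thereby meets the input requirements of Lemma~\ref{LemSup}.
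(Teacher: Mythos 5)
Your proof is correct and follows essentially the same route as the paper: reduce modulo a generic height-$(d-1)$ parameter ideal $\fq$, use the specialization of the trace ideal together with Proposition \ref{LemGGL} to produce a canonical reduction of the one-dimensional quotient, and lift it back to $R$ via Lemma \ref{LemSup}, concluding with $I+\fq$. You are in fact somewhat more careful than the paper at the points where Lemma \ref{LemSup}'s hypotheses must be checked (that the lifted element lies in $K\setminus\fm K$, that $\fq$ is regular on $R/y'R$, and the multiplicity count showing $y'R+\fq$ is a reduction of $\fm$).
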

\begin{proof}
					Let $K$ be an arbitrary canonical ideal of $R$ and assume that $x_1\in \tr_R(K)$ is a superficial element of $\fm$. Choose $(x_2, x_3, \cdots , x_d)$ as a superficial sequence for $R/x_1R$, which is also regular over $R/K$. Set $\fq =  (x_2, x_3, \cdots , x_d)$, $\bar{R} = R/\fq$ and $\bar{K} = K(R/\fq)$. By \cite[Lemma 5.1 (ii)]{HTS}, $\tr_R(K)\bar{R} \su \tr_{\bar{R}}(\bar{K})
					$ which means $\tr_{\bar{R}}(\bar{K})$ admits a superficial element of $\bar{\fm}$ and $\bar{R}$ has a canonical reduction by Proposition \ref{LemGGL} (iv). Again by Proposition \ref{LemGGL} (iii), there exists $\bar{y} \in \bar{K}$ such that $(\bar{y} : \bar{K})$ is a reduction of $\bar{\fm}$. Hence there exists $x \in \fm$ such that $\bar{x} \in (\bar{y} : \bar{K})$ is a minimal reduction for $\bar{\fm}$. By Lemma \ref{LemSup}, there exists a canonical ideal $I$ of $R$ such that $x \in I$. As $(x_2, \cdots , x_d)$ is a superficial sequence for $\fm$, $xR + \fq$, and so $I + \fq$ is a reduction of $\fm$ and $R$ has a canonical reduction.
				\end{proof}
				
				Note that when $R$ admits a canonical reduction then the trace of the canonical module of $R$ may not  admit a reduction element for $\fm$. But this is true if  canonical ideals of $R$ are equimultiple, see Notation \ref{DCR1}. 	Unfortunately, not all rings with canonical reductions have equimultiple canonical ideals, even if they admit a canonical ideal which has a reduction element for maximal ideal (as happens if $R$ is generalized Gorenstein or nearly Gorenstein ring). Next proposition shows a necessary and sufficient condition for a ring to have an equimultiple canonical ideal. Recall that an ideal $I$ is called {\it normally torsion-free} if $\Ass_R(R/I) = \Ass_R(R/I^n)$ for all $n>0$.

				\begin{prop}\label{equi}
					Assume that $(R,\fm)$ is a Cohen-Macaulay local ring with infinite residue field and that $K$ is a Cohen-Macaulay ideal of height one. Then $K$ is equimultiple if and only if $K$ is normally torsion-free.
				\end{prop}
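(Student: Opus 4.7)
My plan is to prove each implication using the interplay between principal reductions of $K$ and its asymptotic associated primes. Since $R/K$ is Cohen-Macaulay of dimension $d-1$ where $d=\dim R$, we have $\Ass(R/K)=\Min(K)$, a finite set of height-one primes.

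For the forward direction, assume $K$ is equimultiple. Since $R/\fm$ is infinite and $\h(K)=1$, $K$ admits a principal minimal reduction $xR$ with $x\in K$ an $R$-regular element, so $K^{n+1}=xK^n$ for every $n\ge r$, where $r$ is the reduction number. As $xR$ and $K$ share the same radical, $\Min(xR)=\Min(K)$, and by Cohen-Macaulayness of $R/xR$ one obtains $\Ass(R/x^kR)=\Min(K)$ for all $k\ge 1$. For $n\ge r$, $K^n=x^{n-r}K^r$, and the short exact sequence
\[ 0\to x^{n-r}R/x^{n-r}K^r\to R/K^n\to R/x^{n-r}R\to 0, \]
together with the isomorphism $x^{n-r}R/x^{n-r}K^r\cong R/K^r$ (multiplication by $x^{n-r}$ is injective since $x$ is regular), reduces $\Ass(R/K^n)$ to $\Ass(R/K^r)\cup\Min(K)$. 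The remaining inclusions $\Ass(R/K^j)\subseteq\Min(K)$ for $1\le j\le r$ I would handle by induction using $0\to K^j/K^{j+1}\to R/K^{j+1}\to R/K^j\to 0$, combined with the Cohen-Macaulayness of $\mathrm{gr}_K(R)$, which holds for a height-one Cohen-Macaulay equimultiple ideal by a Valabrega-Valla argument: the initial form $x^{\ast}\in[\mathrm{gr}_K(R)]_1$ is a non-zero-divisor because $x$ is a superficial $R$-regular element generating a principal reduction. Cohen-Macaulayness forces $\Ass_R(K^j/K^{j+1})\subseteq\Min(K)$, so $\Ass(R/K^n)\subseteq\Min(K)$, and the reverse inclusion is automatic, giving normal torsion-freeness.

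For the backward direction, assume $K$ is normally torsion-free. The case $\dim R=1$ is immediate (every $\fm$-primary ideal in a one-dimensional Cohen-Macaulay local ring with infinite residue field admits a principal reduction), so suppose $\dim R\ge 2$. Pick a superficial element $x\in K$ for $K$, so $K^{n+1}:x=K^n$ for $n\gg 0$ by \cite[Lemma 8.5.3]{SH}. It suffices to prove $K^{n+1}\subseteq xR$ for such $n$, since then $K^{n+1}=x(K^{n+1}:x)=xK^n$ and $xR$ is a principal reduction of $K$, so $K$ is equimultiple. Cohen-Macaulayness of $R/xR$ gives $\Ass(R/xR)=\Min(xR)$, all height one, and by unmixedness
\[ xR=\bigcap_{P\in\Min(xR)}\bigl(xR_P\cap R\bigr). \]
For $P\in\Min(xR)\setminus\Min(K)$ the containment $K\not\subseteq P$ gives $K_P=R_P$, so $K^{n+1}R_P\subseteq xR_P$ trivially. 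For $P\in\Min(K)\cap\Min(xR)$, the localization $R_P$ is one-dimensional Cohen-Macaulay, both $K_P$ and $xR_P$ are $PR_P$-primary, and the localized superficiality of $x$ together with a direct comparison in the one-dimensional ring $R_P$ yields $K_P^{n+1}\subseteq xR_P$ for $n$ large. Intersecting across $P\in\Min(xR)$ gives $K^{n+1}\subseteq xR$ globally.

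The main obstacle is the backward direction: globalizing the local containments $K_P^{n+1}\subseteq xR_P$ to $K^{n+1}\subseteq xR$ depends crucially on the unmixedness of $R/xR$ (from the Cohen-Macaulayness of $R$) and on normal torsion-freeness forcing all associated primes of $R/K^n$ to lie in $\Min(K)$. The forward direction, while more computational, rests on the Cohen-Macaulayness of $\mathrm{gr}_K(R)$, which is standard for height-one equimultiple Cohen-Macaulay ideals but requires invoking a Valabrega-Valla type criterion.
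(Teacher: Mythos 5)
Both directions of your argument contain steps that fail. In the forward direction everything reduces, as you yourself note, to showing $\Ass(R/K^j)\subseteq\Min(K)$ for the small powers $j\le r$, and for this you invoke the Cohen--Macaulayness of $\mathrm{gr}_K(R)$, justified by the claim that the initial form $x^{\ast}$ of a superficial reduction element is a non-zero-divisor. Neither claim is correct. Superficiality only gives $(0:_{\mathrm{gr}_K(R)}x^{\ast})_n=0$ for $n\gg 0$; the full non-zero-divisor property is equivalent to the Valabrega--Valla conditions $xR\cap K^n=xK^{n-1}$ for all $n$, which do \emph{not} follow from $xR$ being a reduction, and even granting it you would only get $\depth\mathrm{gr}_K(R)\ge 1$, not Cohen--Macaulayness. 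Concretely, if $A$ is a one-dimensional Cohen--Macaulay local domain with $\depth\mathrm{gr}_{\fm_A}(A)=0$ (classical numerical semigroup examples exist), then $K=\fm_A$ is a height-one Cohen--Macaulay equimultiple ideal whose associated graded ring is not Cohen--Macaulay; passing to $R=A[[u]]$ and $K=\fm_AR$ reproduces the failure in dimension two, where the statement you need is no longer vacuous (note that this $K$ \emph{is} normally torsion-free, so the proposition survives but your lemma does not). The paper's forward argument instead compares $\Ass(R/K^n)$ directly with $\Ass(R/x^nR)$.

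In the backward direction the decisive step is simply false. For $P\in\Min(xR)\setminus\Min(K)$ you correctly get $K_P=R_P$, hence $K^{n+1}R_P=R_P$; but $x\in P$ forces $xR_P\subsetneq R_P$, so $K^{n+1}R_P\subseteq xR_P$ is exactly the opposite of ``trivially true.'' In fact $K^{n+1}\subseteq xR$ can only hold if $\Min(xR)=\Min(K)$, i.e.\ $\sqrt{xR}=\sqrt{K}$, and producing an element $x\in K$ with $\sqrt{xR}=\sqrt{K}$ is essentially the assertion $\ell(K)=1$ that you are trying to prove; a general superficial element of $K$ has no reason to satisfy it. Notice also that, apart from a closing remark, your backward argument never actually uses normal torsion-freeness, so if it worked it would prove that every height-one Cohen--Macaulay ideal is equimultiple, which is false. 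The paper proceeds differently: it chooses $x_2,\dots,x_d$ forming a regular sequence on $R/K^{n}$ for every $n$ simultaneously (possible precisely because normal torsion-freeness makes $\Ass(R/K^n)=\Min(K)$ a fixed set of height-one primes avoidable by a general system of parameters for $R/K$) and verifies by direct computation that this is a superficial sequence for $K+\fq$, reducing to the one-dimensional case. You would need an argument of this kind; in dimension two one can alternatively feed $\depth(R/K^n)\ge 1$ into Burch's inequality $\ell(K)\le\dim R-\min_n\depth(R/K^n)$, but that shortcut does not extend to higher dimension.
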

				\begin{proof}
					First let $x \in K$ be a reduction of $K$, which means $K$ is equimultiple. Then $x^n$ is a reduction of $K^n$ for all $n>0$, and so $\Ass_R(R/K^n) = \Ass_R(R/x^nR)$ for all $n>0$. Now the result follows as $\Ass(R/xR) = \Ass(R/x^nR)$ for all $n>0$. Conversely, assume that $K$ is normally torsion-free. Let $\fq = (x_2, \cdots , x_d)$ be  an ideal generated by  a part of a system of  parameters for $R$ and so $\fq(R/K)$ is a parameter ideal for $R/K$. We will show that $x_2, \cdots , x_d$ is a superficial sequence for $K + \fq$. To this, it is enough to show that $x_d(R/\fq')$ is a  superficial element of $(K+\fq)/\fq'$, where $\fq' = (x_2, \cdots , x_{d-1})$ (set $\fq'=0$ if $d=2$). Let $r \in R$ such that $rx_d(R/\fq') \in ((K+\fq)/\fq')^{n+1}$. We will use \cite[Lemma 8.5.3]{SH} and show that $r(R/\fq') \in ((K+\fq)/\fq')^{n}$. As $(K+\fq)^{n+1} =  K^{n+1} + \fq K^n +  \cdots \fq^nK + \fq^{n+1}$ while $\fq = (x_2, \cdots, x_d) = \fq' + x_dR$, after a rearrangement  we have 
					$$rx_d = k_{n+1} + x_dk_{n} + \cdots + x_d^nk_{1} + ax_d^{n+1} +  a'$$ where  $k_{i} \in K^i$ for $1 \leq i \leq n+1$, $a \in R$ and $a' \in \fq'$. Therefore $$x_d(r - k_n - \cdots - x_d^{n-1}k_1 - ax_d^n) = k_{n+1} + a' \in K^{n+1} + \fq'.$$ By assumption $x_d$ is regular over $K^{n+1} + \fq'$, which means $r - k_n - \cdots - x_d^{n-1}k_1 - ax_d^n \in K^{n+1} + \fq'$ and so  $$ r \in K^{n+1} + K^n + \cdots + x_d^{n-1}K + x^nR + \fq' \su (K+\fq)^{n} + \fq'.$$		
				\end{proof}
				
Sometimes it is easier to show that when canonical ideal is not equimultiple. Next  result generalized \cite[Lemma 3.6]{GGHV}.
				
				\begin{prop}\label{Neq}
Assume that $(R,\fm)$ is a Cohen-Macaulay local ring with infinite residue field, admitting a canonical ideal. If $R$ is Gorenstein in codimension one then  canonical ideals of $R$ are not equimultiple. 	
				\end{prop}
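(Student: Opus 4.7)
The plan is to assume for contradiction that some canonical ideal $K$ of $R$ is equimultiple, and derive that $K$ must be principal; this will force $R$ itself to be Gorenstein, contradicting the implicit assumption that $R$ is not Gorenstein (the statement being otherwise vacuous, since principal canonical ideals are trivially equimultiple). Equimultiplicity together with $\h(K) = 1$ yields a principal reduction: there is an $R$-regular element $x \in K$ with $K^{n+1} = xK^n$ for some $n \geq 1$. For any $P \in \Min(K)$ one has $\h(P) = 1$, so $R_P$ is Gorenstein by the codimension one hypothesis, and the canonical ideal $K_P$ of the one-dimensional Gorenstein local ring $R_P$ must be principal; since $xR_P$ is a reduction of the principal ideal $K_P$, a short calculation inside $R_P$ forces $K_P = xR_P$.

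The key construction is the fractional ideal $L := x^{-n}K^n$ in the total quotient ring of $R$. Iterating the reduction equation gives $K^{n+m} = x^m K^n$ for all $m \geq 0$, whence $L^2 = x^{-2n}K^{2n} = x^{-2n}\cdot x^n K^n = L$. Since $1 \in L$ and $L$ is finitely generated as an $R$-module, this idempotence upgrades $L$ to a subring of the total quotient ring of $R$ that contains $R$ and is finite over $R$.

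It suffices to show $L = R$: this forces $K^n = x^n R$, and for any $k \in K$ we then have $kx^{n-1} \in K^n = x^n R$, so cancelling the regular element $x^{n-1}$ yields $k \in xR$; thus $K = xR$ is principal, $K \cong R$ as an $R$-module, and $R$ is Gorenstein. To prove $L = R$ we show that $L/R$ has no associated primes. Via multiplication by $x^n$, $L/R \cong K^n/x^n R \hookrightarrow R/x^n R$, so $\Ass(L/R) \subseteq \Ass(R/x^n R) = \Ass(R/xR)$, a set of height one primes. Every height one prime $P$ falls into one of three disjoint cases: (i) $x \notin P$, forcing $K \not\subseteq P$, whence $K_P = R_P$ and $L_P = R_P$; (ii) $P \in \Min(K)$, for which $K_P = xR_P$ from the first step gives $L_P = R_P$; (iii) $x \in P$ but $K \not\subseteq P$, for which $K_P = R_P$ and hence $L_P = x^{-n}R_P$, a set that fails to be closed under multiplication whenever $x$ is a non-unit of $R_P$ (since $x^{-2n} \notin x^{-n}R_P$), contradicting that $L$ is a ring. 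The last case --- ruling out associated primes $P$ with $x \in P$ and $K \not\subseteq P$ --- is the technical heart of the argument and is exactly where the ring structure of $L$ (rather than merely its module structure) is essential. Hence $(L/R)_P = 0$ for every height one prime $P$, $\Ass(L/R) = \emptyset$, and $L = R$, completing the proof.
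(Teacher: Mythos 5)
Your proof is correct, and its core coincides with the paper's: localize at the height-one primes, use the codimension-one Gorenstein hypothesis to make $K_P$ principal, and then force $K_P=xR_P$ from the fact that $xR_P$ is a reduction of a principal ideal generated by a regular element (this is exactly what the paper's Lemma \ref{suploc} delivers). Where you genuinely differ is in the globalization. The paper takes $x$ a superficial element of $K$, sets $C=K/xR$, notes $\Ass_R(C)\subseteq\Ass_R(R/xR)$, and concludes $C=0$ from the vanishing of all height-one localizations; it is silent about the height-one primes containing $x$ but not $K$ (your case (iii)), which are in fact excluded only because equimultiplicity makes $K$ integral over $xR$, so that $V(xR)=V(K)$. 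You instead route the argument through the idempotent fractional ideal $L=x^{-n}K^n$, prove $L=R$ by the same height-one check, and then descend to $K^n=x^nR$ and $K=xR$; your case (iii) is vacuous for the integrality reason just given, but your alternative disposal of it via the ring structure of $L$ (the localization $x^{-n}R_P$ cannot be closed under multiplication when $x\in P$) is a legitimate substitute that makes the step explicit rather than implicit, at the cost of the extra machinery of the blow-up algebra $L$. Both arguments terminate in ``hence $R$ is Gorenstein,'' and you are right to flag that this is the intended contradiction: read literally the proposition fails for Gorenstein rings, since a principal canonical ideal is trivially equimultiple, and the paper's own proof likewise establishes only that an equimultiple canonical ideal forces $R$ to be Gorenstein.
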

				\begin{proof}
Let $K \in \mathfrak{C}_R$ and $x \in K$ be a superficial element of $K$. Assume that $\fp \in \Ass_R(R/K)$ such that $R_{\fp}$ is Gorenstein. Then, by Lemma \ref{suploc}, $K_{\fp} = xR_{\fp}$. Therefore, if we set $C = K/xR$ then $\fp \notin \Ass_R(C)$. This means $\Ass_R(C) = \Ass_R(R/xR) \backslash \{ \fp \in \Ass(R/K) \ | \ R_P \ \text{is Gorenstein} \ \}$. Now consider the exact sequence $0 \lo R \overset{\times x}{\lo} K \lo C \lo 0$. If $R$ is Gorenstein in codimension one, then $C_{\fp}$ for all $\fp \in \Spec R$ with $\h(\fp) = 1$. Hence $C = 0$ and $R$ is Gorenstein.
				\end{proof}

In order to continue our discussion we recall the definition of almost Gorenstein and generalized Gorenstein local rings.
						
	\begin{defn}\label{defgg}\emph{ Assume that $(R,\fm)$ is a Cohen-Macaulay local ring such that there exists an exact sequence 
				\begin{center}
						$0 \lo R \lo K \lo C \lo 0.$	\end{center} 
						Then $R$ is {\it generalized Gorenstein} with respect to an $\fm$-primary ideal $\fa$ if 	\begin{itemize}	\item [(1)] $C/\fa C$ is a free $R/\fa$-module, and 	
						\item [(2)] $\fa C = (x_2, x_3, \cdots , x_d)C$ for some sequence  $x_2, x_3, \cdots , x_d\in \fa$ of  a system of parameters.		\end{itemize}	By (\cite[Definition 3.3]{GTT}) $R$ is called {\it almost Gorenstein}  if $\mu(C) = \e_{\fm}^0(R)$, which means $\fa = \fm$. }	\end{defn}
						
		\begin{prop}\label{AGCR}			Assume that $(R,\fm)$ is a $d$-dimensional generalized Gorenstein local ring with respect to an $\fm$-primary ideal $\fa$, with infinite residue field. Consider the next two conditions.
			\begin{itemize}
			\item [(i)] $R$ has a canonical reduction.
			\item [(ii)] $\fa$ admits a reduction element for $\fm$.
			\end{itemize}
		Then (ii)$\Rightarrow$(i). If $R$ admits an  equimultiple canonical reduction then (i)$\Rightarrow$(ii).  In particular, almost Gorenstein rings have a canonical reduction. 
	\end{prop}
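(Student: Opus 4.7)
The plan is to reduce both implications to the one-dimensional case (Corollary~\ref{CRNGGG}(b)) by passing to a quotient $\bar R=R/\fq$ for an appropriately chosen parameter sequence $\fq=(x_2,\ldots,x_d)\subseteq\fa$, and to lift the resulting data back to $R$ via Lemma~\ref{LemSup}. Write the defining exact sequence $0\to R\to K\to C\to 0$ of the generalized Gorenstein structure, with the first map being multiplication by a regular element $x\in K\setminus\fm K$, so $C=K/xR$. Using infiniteness of $R/\fm$, the sequence $(x_2,\ldots,x_d)\subseteq\fa$ satisfying $\fa C=(x_2,\ldots,x_d)C$ may be chosen as a generic superficial sequence for $\fm$ that is regular on $R$, $K$, $R/K$, and $R/xR$, with $x$ chosen so that $xR+\fq$ is a parameter ideal of $R$. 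The key observation is that $\fa C=\fq C$ rewrites as $\fa K\subseteq \fq K+xR$, so passing to $\bar R=R/\fq$ gives
\begin{equation*}
\overline{\fa}\,\overline{K}\subseteq \overline{x}\,\overline{R},\qquad\text{i.e.}\qquad \overline{\fa}\subseteq(\overline{x}\,\overline{R}:_{\overline{R}}\overline{K}).
\end{equation*}

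For (ii)$\Rightarrow$(i), let $y\in\fa$ be a reduction element of $\fm$, so $(y,v_2,\ldots,v_d)$ is a minimal reduction of $\fm$. The inclusion above yields $\bar y\,\overline{K}\subseteq\bar x\,\overline{R}$, which is precisely the hypothesis of Lemma~\ref{LemSup}; the lemma then produces a canonical ideal $J$ of $R$ with $y\in J$. Since $(v_2,\ldots,v_d)$ is the parameter part of a minimal reduction, it is equimultiple of height $d-1$, and $J+(v_2,\ldots,v_d)\supseteq(y,v_2,\ldots,v_d)$ is a reduction of $\fm$. Hence $J$ is a canonical reduction of $R$ in the sense of Definition~\ref{CanRedHigherDim}.

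For (i)$\Rightarrow$(ii) under the equimultiple hypothesis, let $K$ be an equimultiple canonical reduction with principal reduction $xR$ of $K$ and companion equimultiple ideal $I$ of height $d-1$ with $K+I$ a reduction of $\fm$. By Remark~\ref{CanRednot}(b)--(c), the equimultiple assumption on $K$ permits the choice of a minimal reduction $\fq_I$ of $I$ such that $xR+\fq_I$ is a minimal reduction of $\fm$. Selecting the generalized Gorenstein parameter sequence $\fq=(x_2,\ldots,x_d)\subseteq\fa$ generically in $\fa$ so that the quotient $\bar R=R/\fq$ inherits both properties---namely, $\overline{K}$ is a canonical reduction of $\bar R$ and $\bar R$ is generalized Gorenstein with respect to $\overline{\fa}$---we may apply Corollary~\ref{CRNGGG}(b) to conclude that $\overline{\fa}$ is a reduction of $\overline{\fm}$. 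Choose $\bar w\in\overline{\fa}$ whose image is a principal reduction of $\overline{\fm}$ and lift to $w\in\fa$; a standard Artin--Rees argument then shows that $(w,x_2,\ldots,x_d)$ is a reduction of $\fm$ in $R$, so $w\in\fa$ is a reduction element of $\fm$.

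The almost Gorenstein consequence is immediate: by Definition~\ref{defgg}, $\fa=\fm$ for almost Gorenstein rings, and $\fm$ trivially admits a reduction element of itself when $R/\fm$ is infinite, so (ii) holds and (i) follows from the first implication. The central technical obstacle throughout is the simultaneous compatible choice of the parameter sequence $\fq\subseteq\fa$: it must satisfy the generalized Gorenstein identity $\fa C=\fq C$ and the regularity hypotheses of Lemma~\ref{LemSup}, and, in the converse direction, it must be aligned with the parameter ideal supplied by the equimultiple canonical reduction. These compatibilities are secured by generic choice exploiting the infinite residue field.
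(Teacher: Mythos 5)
Your argument for (ii)$\Rightarrow$(i) is essentially the paper's: fix the exact sequence $0\lo R\overset{\times x}{\lo}K\lo C\lo 0$, choose $\fq=(x_2,\ldots,x_d)\su\fa$ superficial for $C$ and regular on $R$, $R/K$, $R/yR$, $R/xR$, observe that $\fa C=\fq C$ forces $\bar\fa\su(\bar x\bar R:_{\bar R}\bar K)$ in $\bar R=R/\fq$, and feed the reduction element $y\in\fa$ into Lemma \ref{LemSup} to produce a canonical ideal containing $y$. Your direct derivation of the colon inclusion from $\fa C=\fq C$ is a nice self-contained substitute for the paper's citation of the one-dimensional structure theorem for generalized Gorenstein rings, and the conclusion via $J+(v_2,\ldots,v_d)\supseteq(y,v_2,\ldots,v_d)$ is correct.

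The direction (i)$\Rightarrow$(ii) is where there is a genuine gap. You need a single parameter sequence $\fq$ satisfying two constraints at once: $\fq\su\fa$ with $\fa C=\fq C$ (so that $\bar R$ is generalized Gorenstein with respect to $\bar\fa$), and $K+\fq$ a reduction of $\fm$ (so that $\bar K$ is a canonical reduction of $\bar R$ and Corollary \ref{CRNGGG}(b) applies). The second constraint is only available for $\fq$ a minimal reduction of the companion ideal $I$ of height $d-1$ supplied by Definition \ref{CanRedHigherDim}, and a minimal reduction of $I$ is generated by elements of $I$, which need not meet $\fa$ in a reduction of $I$; the two constraints live on different parameter spaces, so ``generic choice'' does not reconcile them, and you offer no argument that it does. (The Artin--Rees lifting at the end imposes a third constraint, superficiality of $\fq$ for $\fm$.) The paper avoids all of this: since $R$ is generalized Gorenstein one has $K\su\fa$ (this is \cite[Theorem 1.2]{GGL}, equivalently $K^2\su yR$ for a principal reduction $y$ of $K$ --- a nontrivial input your proof never invokes), and if $K$ is an equimultiple canonical reduction with reduction element $x\in K$, then Remark \ref{CanRednot}(c) already says $xR+\fq$ is a minimal reduction of $\fm$ for a suitable $\fq$; hence $x\in K\su\fa$ is the desired reduction element, with no passage to dimension one at all. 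Your treatment of the almost Gorenstein case ($\fa=\fm$) is fine.
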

	\begin{proof}
	Since $R$ is generalized Gorenstein, there exists an exact sequence $0 \lo R \overset{\phi}{\lo} K \lo C \lo 0$ such that $K$ is canonical ideal of $R$ and $C$ is as in Definition \ref{defgg}. Set $y = \phi(1)$.
	
	(i)$\Rightarrow$(ii). Let $K$ be a canonical ideal of $R$, which is equimultiple, and $x \in K$ as a reduction. Then, by Notation \ref{CanRednot}(c), $x$ is a part of a reduction of $\fm$ while $x \in \fa$ since $K \su \fa$ by \cite[Theorem 1.2]{GGL}.  
	
	 (ii)$\Rightarrow$(i).  By \cite[Corollary 8.5.9.]{SH} there exists an $R$-regular  element $x_2 \in \fa$ such that $x_2$ is superficial for $C$,  $x_2$ is regular on $R/K$,  on $R/yR$, and on $R/xR$. If $d> 2$ then  one may continue in this way to find an ideal $\fq = (x_2, x_3, \ldots , x_d)$ generated by an $R/xR$-regular sequence such that $\fq$ is a superficial sequence for $C$, $x_2, x_3, \ldots , x_d$ is a system of parameters for $R/K$,  $yR + q$ is a parameter ideal of $R$, and  $xR + q = (x, x_2, \ldots , x_d)$ is a parameter ideal for $R$. Now, \cite[Theorem 2.4]{GGL} implies that $R/q$ is a one-dimensional generalized Gorenstein ring with respect to $\fa/q$. Therefore, $\fa/q = (y(R/q) : K(R/q))$. By Lemma \ref{LemSup}, there exists a canonical ideal $I$ of $R$ such that $x \in I$, and so $R$ has a canonical reduction.
	 	\end{proof}

One may notice that the ring which is discussed in \cite[Example 5.1]{GGL} admits a canonical reduction by Proposition \ref{AGCR}.

 Next remark deals with some special rings with canonical reduction, using linkage theory.
 
 \begin{rem}\emph{
 	Assume that $(R,\fm)$ is $d$-dimensional Cohen-Macaulay local ring, admitting a canonical ideal. If $I$ and $J$ be geometrically linked ideals, i.e. $I = (0:J)$ and $J=(0:I)$ with $\Ass_R(R/I)\cap\Ass_R(R/J) = \emptyset$, then $I + J$ contains a non-zerodivisor. Assume that there exists a regular sequence $\fq$ over $R/(I+J)$, such that $I+J + \fq$ is a reduction of $\fm$ (for example $I+J$ contains a superficial element for $\fm$). Then, as $(I+J)/J$ is a canonical ideal of $R/J$ (respectively $(I+J)/I$ is a canonical ideal of $R/I$) and $\fq(R/J)$ is a parameter for $R/J$ (respectively $\fq(R/I)$ is a parameter for $R/I$) then $R/I$ and $R/J$ both admits a canonical reduction. }
 \end{rem}
   
   Assume that $R$ is a Gorenstein local ring, and $I$ and $J$ are linked and $R/I$ is Cohen-Macaulay ring. If $\mu(J) = 1$ then  $R/I$ is a Gorenstein ring since $K_{R/I} = \Hom_R(R/I , R) = J$. Next proposition generalized this result to rings with canonical reductions.  
   
 \begin{prop}\label{LinkCanRed}
 	Assume that $(R,\fm)$ is a $d$-dimensional Cohen-Macaulay local ring, admitting canonical reduction $K$, and that $I$ and $J$ are ideals of $R$ with $R/I$ is Cohen-Macaulay.  
 	If $I$ is linked to $J$ with respect to $\q$, where $q$ is a maximal regular sequence contained in $I\cap J$, such that $R/q$ has a canonical reduction  and $J/q$ is cyclic then $R/I$ has a canonical reduction.
 
 \end{prop}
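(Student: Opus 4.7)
The plan is to construct a canonical ideal of $R/I$ explicitly from the linkage data and then verify that, together with an equimultiple ideal descended from $\bar R:=R/\fq$, it provides a canonical reduction. Let $g=\h I=\h J$, equal to the length of $\fq$, so $\bar R$ is Cohen-Macaulay of dimension $d-g$, and $R/I$, Cohen-Macaulay of the same dimension, is a maximal Cohen-Macaulay $\bar R$-module. Since $J/\fq$ is cyclic, write $J/\fq=\bar\alpha\bar R$ for some $\alpha\in J$; combining $J=\fq+\alpha R$ with the linkage relation $I=(\fq:J)$ gives $(\fq:\alpha)=I$, hence $\Ann_{\bar R}(\bar\alpha)=I/\fq$ and a $\bar R$-module isomorphism $\phi:R/I\xrightarrow{\sim} J/\fq$ sending $\bar 1\mapsto\bar\alpha$.

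Next I would identify the canonical module of $R/I$ as an ideal inside $\bar R$. Because $\dim R/I=\dim\bar R$, standard duality gives $K_{R/I}=\Hom_{\bar R}(R/I,K_{\bar R})$, and realizing $K_{\bar R}$ as the canonical reduction $\bar K\su\bar R$ supplied by hypothesis,
\[
K_{R/I}=(0:_{\bar K} I/\fq)=\bar K\cap (0:_{\bar R} I/\fq)=\bar K\cap(J/\fq),
\]
where the last equality uses the second linkage relation $J=(\fq:I)$. Fix a lift $K_0\su R$ of $\bar K$ with $\fq\su K_0$ and $K_0/\fq=\bar K$. The pull-back along $\phi$ turns $\bar K\cap(J/\fq)\su J/\fq$ into the ideal $K':=(K_0:\alpha)/I$ of $R/I$, which is therefore a canonical ideal of $R/I$.

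To upgrade $K'$ to a canonical reduction I would use the canonical reduction of $\bar R$: let $\bar L\su\bar R$ be equimultiple of height $d-g-1$ with $\bar K+\bar L$ a reduction of $\fm/\fq$, and let $\bar\fq'=(\bar x_1,\ldots,\bar x_{d-g-1})$ be a minimal reduction of $\bar L$, so $\bar\fq'$ has $d-g-1$ generators and height $d-g-1$ (hence is part of a parameter system of $\bar R$ by Cohen-Macaulayness) and $\bar K+\bar\fq'$ is still a reduction of $\fm/\fq$. Lift to $\fq'=(x_1,\ldots,x_{d-g-1})\su R$. Since reductions pass to quotients, $(K_0+\fq'+I)/I$ is a reduction of $\fm/I$; because $K_0\su(K_0:\alpha)$, this sits inside $K'+\fq'(R/I)$, which is therefore also a reduction of $\fm/I$. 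Finally, $\fq'(R/I)$ has $d-g-1$ generators, so Krull's theorem gives $\h(\fq'(R/I))\le d-g-1$; if the height equalled $d-g$, then $\fq'(R/I)$ would be $\fm/I$-primary and Krull would be violated in the $(d-g)$-dimensional Cohen-Macaulay ring $R/I$. Combined with $\dim R/(I+\fq')\le\dim\bar R/\bar\fq'=1$ coming from $\bar\fq'$ being part of a parameter system of $\bar R$, this pins the height at exactly $d-g-1$, and $\bar\fq'$ itself is a minimal reduction, giving the required equimultipleness.

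The principal obstacle is the first identification: realizing $K_{R/I}$ as the explicit ideal $(K_0:\alpha)/I$ inside $R/I$. This depends crucially on the cyclic hypothesis on $J/\fq$, which supplies the $\bar R$-module isomorphism $R/I\cong J/\fq$ and permits one to view $\Hom_{\bar R}(R/I,\bar K)$ as an ideal in $\bar R$. Once this realization is in hand, the rest is a routine descent of the canonical-reduction data of $\bar R$ along the further quotient $R/I=\bar R/(I/\fq)$.
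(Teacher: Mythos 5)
Your argument is correct and arrives at the conclusion by a genuinely different route in the key technical step. Both you and the paper first pass to $\bar R=R/\fq$ (which has a canonical reduction $\bar K$ by hypothesis) and exploit the linkage relations together with the cyclicity of $J/\fq$; the divergence lies in how the canonical module $K_{R/I}=\Hom_{\bar R}(R/I,K_{\bar R})$ is realized as an ideal of $R/I$. The paper uses the image ideal $(K+I)/I$ and must establish the nontrivial identity $I\cap K=IK$ (by dualizing $0\to R/I\xrightarrow{\times a}R\to R/J\to 0$ into $K$ and chasing a commutative diagram) in order to identify that image with $K/IK\cong K_{R/I}$. You instead compute $\Hom_{\bar R}(R/I,\bar K)=(0:_{\bar K}\bar I)=\bar K\cap\bar J$ and transport it along the explicit isomorphism $R/I\cong J/\fq$, $\bar 1\mapsto\bar\alpha$, landing on the colon ideal $K'=(K_0:\alpha)/I$; this bypasses the $I\cap K=IK$ computation entirely, and since $(K_0:\alpha)\supseteq K_0$ the descent of the reduction $\bar K+\bar\fq'$ to $K'+\fq'(R/I)$ is immediate. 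Your verification that $\fq'(R/I)$ is equimultiple of height $\dim(R/I)-1$ (the Krull-height count combined with $\dim R/(I+\fq')\leq 1$) is also more careful than the paper's, which simply asserts that the descended ideal is a parameter ideal of $R/I$. The one point you should patch is the degenerate case $\alpha\in K_0$: there $(K_0:\alpha)=R$ and $K'$ fails to be a proper ideal; but then $\bar K\cap\bar J=\bar J\cong R/I$, so $R/I$ is Gorenstein and one falls back on a principal canonical reduction, exactly as the paper does (implicitly assuming an infinite residue field) when $\bar R$ itself is Gorenstein.
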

 \begin{proof}
 	After all, we may assume that $I$ and $J$ are zero-height ideals such that $J=aR$ for some $a\in \fm$ and $I = (0:a)$ and $aR = (0:I) \cong R/I$. By above discussion, if $R$ is Gorenstein then so is $R/I$ and there is nothing to prove. Hence we may assume that $R$ is not Gorenstein. Let $K_{R/I}$ denotes the canonical module of $R/I$. Note that 
  	\begin{center}
  	$R/I \cong \Hom_R(R/I , R) \cong \Hom_R(R/I , \Hom_R(K , K)) \cong \Hom_R(K/IK , K)$
  	\end{center} 

 which means $K_{R/I} = \Hom_R(R/I , K) \cong K/IK$. Applying $\Hom_R(- , K)$ over the exact sequence $0 \lo R/I \overset{\times a}{\lo} R \lo R/J \lo 0$, with the fact that $\Hom_R(R/J , K) \cong (0:_K J) = I\cap K$, gives the commutative diagram
 $$\begin{CD}
 &&&&&&&&\\
 \ \ 0 @>>> \Hom_R(R/J , K) @>>> \Hom_R(R , K) @>>> \Hom_R(R/I , K) @>>> 0 &  \\
 && @A{g}A{\cong}A @A{g'}A{\cong}A \\
 \ \  0 @>>> I\cap K @>>> K @>>> K/I\cap K @>>> 0 &\\
 &&&&&&  \\
 \end{CD}$$ 
 where $g$ and $g'$ are isomorphisms with
 $g(a)(\bar{1})= a$ for $a \in I\cap K$ and $g'(k)(1) = k$ for $k \in K$. Therefore there is a well-defined homomorphism $\theta : K/I\cap K \lo \Hom_R(R/I , K)$ such that $\theta(\bar{k})(\bar{1}) = k$ for $k \in K$.  If $\theta' : \Hom_R(R/I , K) \lo K/IK$ be the natural homomorphism, which send $f \in \Hom_R(R/I , K)$ to $\overline{f(\bar{1})} \in K/IK$, then there is a well-defined homomorphism $\theta'\theta : K/I\cap J \lo K/IK$ such that $\theta'\theta(k + I\cap K) = k + IK$ for each $k \in K$. Hence $I\cap K = IK$ and $K(R/I)$ is a canonical ideal of $R/I$. Now, if $\fq$ be the parameter ideal of height $d-1$ such that $K + \fq$ is a reduction of $\fm$ then $\fq(R/I)$ is also a parameter ideal for $R/I$ and $K(R/I) + \fq(R/I)$ is a reduction of $\fm/I$   and $R/I$ has a canonical reduction.
  \end{proof}

  Next we investigate transferring canonical reductions by some specific ring homomorphisms. 	First we prove a lemma.
 
 \begin{lem}\label{LemMul}
 	Assume that $\varphi : (R,\fm) \lo (S, \fn)$ is a flat local homomorphism between $d$-dimensional Noetherian local rings. Then $\e_{\fm S}^0(S) = \ell_S(S/\fm S)\e_{\fm}^0(R)$. 
 \end{lem}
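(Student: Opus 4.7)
The plan is to reduce the multiplicity identity to a Hilbert-Samuel length identity, and then use flatness to pass lengths from $R$ to $S$.

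First I would observe that because $\varphi$ is flat and $\dim R = \dim S = d$, the dimension formula $\dim S = \dim R + \dim(S/\fm S)$ forces $\dim(S/\fm S) = 0$. Thus $S/\fm S$ is Artinian, $\ell_S(S/\fm S) < \infty$, and $\fm S$ is an $\fn$-primary ideal of $S$, so the multiplicity $\e_{\fm S}^0(S)$ is well-defined as the usual Hilbert-Samuel multiplicity.

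Next, the key lemma I would establish is: for every $R$-module $M$ of finite length,
\[
\ell_S(M \otimes_R S) \;=\; \ell_R(M)\cdot\ell_S(S/\fm S).
\]
This is done by induction on $\ell_R(M)$. The base case $M=R/\fm$ gives $M\otimes_R S \cong S/\fm S$. For the inductive step, any short exact sequence $0\to M'\to M\to R/\fm\to 0$ stays exact after $-\otimes_R S$ by flatness of $S$, so lengths add. Applying this lemma to $M=R/\fm^n$ (which has finite length for every $n$) yields
\[
\ell_S(S/\fm^n S) \;=\; \ell_R(R/\fm^n)\cdot\ell_S(S/\fm S).
\]

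Finally, I would divide both sides by $n^d/d!$ and let $n\to\infty$. The left-hand side tends to $\e_{\fm S}^0(S)$ (both $R$ and $S$ have dimension $d$, and $\fm S$ is $\fn$-primary), while the right-hand side tends to $\ell_S(S/\fm S)\cdot \e_{\fm}^0(R)$. This gives the desired equality $\e_{\fm S}^0(S)=\ell_S(S/\fm S)\,\e_{\fm}^0(R)$.

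There is no real obstacle; the only point that deserves care is the flatness step used to derive the length identity $\ell_S(M\otimes_R S)=\ell_R(M)\ell_S(S/\fm S)$, which requires that tensoring with $S$ preserves short exact sequences of finite-length $R$-modules. Everything else is a routine passage from Hilbert-Samuel polynomials to leading coefficients.
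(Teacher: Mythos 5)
Your proof is correct and follows essentially the same route as the paper: establish the length identity $\ell_S(M\otimes_R S)=\ell_R(M)\,\ell_S(S/\fm S)$ for finite-length $M$ via flatness, apply it to $R/\fm^n$, and compare Hilbert--Samuel functions to extract the multiplicity. You have merely filled in the details (the induction on length and the observation that $\fm S$ is $\fn$-primary) that the paper leaves as an exercise.
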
 	
		\begin{proof}
			 It is an easy exercise to prove that $\ell_S(S\otimes_R M) = \ell_S(S/\fm S)\ell_R(M)$ when $M$ is a finite length $R$-module. Now the result follows by comparing Hilbert functions  $\H_S(\fm S , S)$ and $\H_R(\fm , R)$.
		\end{proof}

		 \begin{thm}\label{canredflat}
			Assume that $\varphi : (R,\fm) \lo (S, \fn)$ is a flat local ring homomorphism between generically Gorenstein Cohen-Macaulay local rings of positive dimension $d$ and $n$, respectively, such that  $S/\fm S$ is Gorenstein. Consider the two following conditions.
				\begin{itemize}
					\item [(a)] $R$ has a canonical reduction and $\e_{\fn}^0(S) = \e_{\fn}^0(S/\fm S)\e_{\fm}^0(R)$.
					\item [(b)] $S$ has  a canonical reduction.
				\end{itemize}
			Then 
			\begin{itemize}
				\item [(i)] Let $n = d$. Then (a)$\Rightarrow$(b). The converse holds if $R/\fm$ is infinite.
				\item [(ii)] Let $n > d$. Then  (a)$\Leftrightarrow$(b)  if $R/\fm$ is infinite and $S/\fm S$ is a regular ring.
			\end{itemize}
				\end{thm}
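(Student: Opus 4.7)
The plan rests on three pillars: flat base change for canonical modules---since $\varphi$ is flat and $S/\fm S$ is Gorenstein, one has $K_S\cong K_R\otimes_R S$, so that for each $K\in\fC$ the extension $KS$ is a canonical ideal of $S$; Lemma \ref{LemMul}, giving $\e_{\fm S}^0(S)=\ell_S(S/\fm S)\e_{\fm}^0(R)$; and Rees's theorem, according to which two $\fn$-primary ideals $\fb\subseteq\fa$ in a Cohen-Macaulay local ring satisfy that $\fb$ is a reduction of $\fa$ if and only if $\e_{\fa}^0(S)=\e_{\fb}^0(S)$. Two preparatory observations will be used repeatedly: flat local homomorphisms preserve heights and reductions, so if $I\su R$ is equimultiple of height $d-1$ with minimal reduction $\fq=(x_2,\ldots,x_d)$ then $IS$ is equimultiple of height $d-1$ with minimal reduction $\fq S$; and if $R/\fm$ is infinite then so is $S/\fn$, so minimal reductions exist freely on both sides.

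For part (i), $n=d$ makes $\fm S$ be $\fn$-primary with $\e_{\fn}^0(S/\fm S)=\ell(S/\fm S)$, so the multiplicity equality in (a) becomes $\e_{\fn}^0(S)=\e_{\fm S}^0(S)$, equivalent by Rees to $\fm S$ being a reduction of $\fn$. Assuming (a), let $K$ be a canonical reduction of $R$ with equimultiple companion $I$ of height $d-1$; base change produces $KS\in\mathfrak{C}_S$ and $IS$ equimultiple of height $n-1$, and $(K+I)S$ reduces $\fm S$, which reduces $\fn$, so $KS$ is a canonical reduction of $S$. Conversely, assume $R/\fm$ is infinite and that $S$ has a canonical reduction. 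Picking a minimal reduction $\fq_R$ of $\fm$, one obtains $\e_{\fm S}^0(S)=\e_{\fq_R S}^0(S)=\ell(S/\fq_R S)=\ell(S/\fm S)\e_{\fm}^0(R)$, so the multiplicity equation reduces to showing $\fm S$ reduces $\fn$. For this, fix any $K\in\fC$ and combine the isomorphism $K_S\cong K_R\otimes_R S$ with Theorem \ref{MaxCanIdeal}(b) to transfer the reduction property of the given canonical reduction of $S$ onto $KS$, producing an equimultiple companion $J$ of height $n-1$ in $S$ with $KS+J$ reducing $\fn$; then faithfully flat descent applied to $KS+\fq_R S$, together with $R/\fm$ infinite, recovers a canonical reduction of $R$ and forces $\fm S$ to reduce $\fn$.

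For part (ii), $S/\fm S$ being regular of dimension $n-d$ permits one to lift a regular system of parameters to $y_1,\ldots,y_{n-d}\in\fn$, which is a regular sequence on $S$ by flatness, and to obtain the equality $\fn=\fm S+(y_1,\ldots,y_{n-d})S$ rather than merely a reduction. For (a)$\Rightarrow$(b), with $K$ a canonical reduction of $R$, companion $I$ of height $d-1$, and minimal reduction $\fq=(x_2,\ldots,x_d)$ of $I$, set $I'=IS+(y_1,\ldots,y_{n-d})S$; the $(n-1)$-generated ideal $(x_2,\ldots,x_d,y_1,\ldots,y_{n-d})S$ is a regular sequence of height $n-1$ reducing $I'$, so $I'$ is equimultiple of height $n-1$, and $KS+I'$ reduces $\fm S+(y_1,\ldots,y_{n-d})S=\fn$. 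The converse reduces to part (i) applied to the induced flat local homomorphism $\varphi_0\colon R\to S_0:=S/(y_1,\ldots,y_{n-d})S$, which has equal dimension $d$ and zero-dimensional Gorenstein fiber $S_0/\fm S_0\cong S/\fn$, a field; since $(y_1,\ldots,y_{n-d})$ is regular both on $S$ and on canonical ideals of $S$, canonical reductions correspond under the quotient, while the multiplicity identity transfers via the associativity formula.

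The main obstacle is the converse direction in (i): extracting information about $R$ from the mere existence of a canonical reduction of $S$. The delicate point is that canonical ideals of $S$ need not arise by extension from canonical ideals of $R$, so one must exploit Theorem \ref{MaxCanIdeal}(b) to move between isomorphic canonical ideals of $S$ and then combine faithfully flat descent with the existence of minimal reductions (which requires $R/\fm$ infinite) to recover both the canonical reduction of $R$ and the multiplicity identity. The assumption that $S/\fm S$ is regular in (ii) is essential precisely because it yields $\fn=\fm S+(y_1,\ldots,y_{n-d})S$ on the nose, which is what makes the reduction modulo parameters to part (i) lossless.
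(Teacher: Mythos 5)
Your overall architecture matches the paper's: flat base change of canonical ideals (using that $S/\fm S$ is Gorenstein), Lemma \ref{LemMul} together with Rees' multiplicity criterion to convert the equality $\e_{\fn}^0(S)=\e_{\fn}^0(S/\fm S)\e_{\fm}^0(R)$ into ``$\fm S$ is a reduction of $\fn$'', the direct argument for (a)$\Rightarrow$(b), and the reduction of part (ii) to part (i) by killing a lift $y_1,\dots,y_{n-d}$ of a regular system of parameters of the fibre. All of that is sound and is essentially what the paper does.

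The gap is in the converse of (i), the step you yourself flag as delicate. Theorem \ref{MaxCanIdeal}(b) only relates \emph{principal} reductions of two isomorphic canonical ideals ($yK=xI$, and $x$ reduces $K$ iff $y$ reduces $I$); it does not transfer the property ``there exists an equimultiple $J$ of height $n-1$ with $T+J$ a reduction of $\fn$'' from the given canonical reduction $T$ of $S$ to the extended ideal $KS$. In dimension $>1$ it is not established anywhere that being a canonical reduction is independent of the chosen canonical ideal, so ``producing an equimultiple companion $J$ for $KS$'' is unjustified. Even granting it, $J$ lives in $S$ and need not be extended from $R$, and your substitute $\fq_R S$ with $\fq_R$ a \emph{minimal reduction of $\fm$} has height $d$, not $d-1$: then $K+\fq_R\supseteq\fq_R$ reduces $\fm$ for trivial reasons and the descent says nothing about $K$. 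You also never explain how the argument ``forces $\fm S$ to reduce $\fn$''. The paper's mechanism is different and is what closes these holes: Proposition \ref{LemGGL}(iii) characterizes (in dimension one) the existence of a canonical reduction by ``$(x:_RK)$ is a reduction of $\fm$ for a suitable regular $x\in K$''; this colon ideal commutes with the flat map, $(x:_RK)S=(xS:_SKS)$, is contained in $\fm S$ (which is how one sees that $\fm S$ reduces $\fn$), and descends by faithful flatness. For $d>1$ one first passes to $\bar R=R/\fq$ and $\bar S=S/\fq S$ with $\fq=(x_2,\dots,x_d)$ a carefully chosen superficial sequence, runs the one-dimensional argument there, and then invokes Lemma \ref{LemSup} to lift the resulting reduction element $x_1$ into a canonical ideal $I$ of $R$ with companion $\fq$. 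Your sketch omits both the colon-ideal criterion and Lemma \ref{LemSup}, and these are precisely the tools needed.
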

			\begin{proof}
				(i). First let $d = 1$. If $R$ has a canonical reduction $K$, then $KS$ is a canonical ideal of $S$ and is a reduction of $\fm S$. By assumption, $\e_{\fn}^0(S) = \ell_S(S/\fm S)\e_{\fm}^0(R) = \e_{\fm S}^0(S)$ which means $\fm S$ is a reduction of $\fn$ by \cite[Theorem 11.3.1]{SH}. Therefore $KS$ is a canonical reduction of $S$. Conversely, let $S$ has a canonical reduction. If $K$ is a canonical ideal of $R$ then, by Proposition \ref{LemGGL} (iii), there exists $x \in K$ such that $(xS :_S KS)$ is a reduction of $\fn$. Therefore $(xS :_S KS)$ is a reduction of $\fm S$ and by faithful flatness, $(x :_R K)$ is a reduction of $\fm$ and $R$ has a canonical reduction. Also, as $\fm S$ is a reduction of $\fn$, by Lemma \ref{LemMul} we have $\e_{\fn}^0(S) = \e_{\fm S}^0(S) = \ell_S(S/\fm S)\e_{\fm}^0(R)$.
				
				Now, assume that $d>1$. If $R$ admits a canonical reduction $K$, then there exists a an equimultiple ideal $I$ of length $d-1$ such that $K + I$ is a reduction of $\fm$. It follows that $KS + IS$ is a reduction of $\fm S$. Now the result follows since $IS$ is equimultiple in $S$ and $\fm S$ is a reduction of $\fn$.
				
				 Assume that $S$ admits a canonical reduction $T$ and let  $K\in \mathfrak{C}_R$ with $K \su \fm^2$. Choose a superficial sequence $\fx = x_2, \cdots , x_d$ for $\fm$, such that (1) $\fx(R/K)$ is a regular sequence for $R/K$, (2) $\fx S$ is  $S$-regular and (3) $\fx S(S/T)$ is a regular sequence for $S/T$. This is quite possible since $\varphi$ is faithful flat. Let $\fq = (x_2, \cdots , x_d)$ and set $\bar{R} = R/\fq$ and $\bar{S} = S/\fq S$. Consider the flat local homomorphism $\bar{\varphi} : \bar{R} \lo \bar{S}$. Both of $\bar{R}$ and $\bar{S}$ are one-dimensional local, and $\bar{S}$ has a canonical reduction $T\bar{S}$. Therefore, since $K\bar{S}$ is a canonical ideal of $\bar{S}$, by Proposition \ref{LemGGL} (iii) there exists a regular element $x \in K$ such that $((x\bar{S} :_{\bar{S}} K\bar{S}))$ is a reduction of $\fn/\fq S$. Hence $(x(R/\fq) : K(R/\fq))$ is a reduction of $\fm/\fq$ which means $R/\fq$ admits a canonical reduction. Let $x_1 \in \fm$ such that $x_1(R/\fq) \in (x(R/\fq) : K(R/\fq))$ as a reduction for $\fm/\fq$. Then by Lemma \ref{LemSup}, there exists a canonical ideal $I$ of $R$ such that $x_1 \in I$. As $x_1\bar{R} = x_1(R/\fq)$ is a reduction for $\fm/\fq$ and $\fq$ generates by a superficial sequence for $\fm$, $x_1, x_2, \cdots , x_d$ is a superficial sequence for $\fm$ of length $d$, and so $x_1R + \fq$ is a minimal reduction of $\fm$ and $I$ is a canonical reduction of $R$.      
				
				(ii).  Choose an $S$-regular sequence $\fy = y_1, \cdots , y_t$ such that (1) $\fy$ is a superficial sequence for $\fn$ and (2) $\fy(S/\fm S)$ is a regular system of parameter for $\fn/\fm S$. Then $\e_{\fn}^0(S) = \e_{\fn/\fy S}^0(S/\fy S)$ and $\ell_S(S/(\fm S+\fy S)) = \e_{\fn/\fm S}(S/\fm S)$. Therefore, as the composition map $R \overset{\varphi}{\lo} S \lo S/\fy S$ is flat by \cite[Lemma 1.23]{HK} and $\dim(R) = \dim(S/\fy S)$, the result follows by (i) and (ii). 
						\end{proof}
						
					Next is an immediate corollary of Proposition \ref{canredflat}. 
					
			\begin{cor} Assume that $(R,\fm)$ is a $d$-dimensional Cohen-Macaulay local ring, admitting canonical ideal, with infinite residue field. Then
				\begin{itemize}
					\item [(a)] Let $n \in \mathbb{N}$ and $X_1, X_2, \cdots, X_n$ be indeterminate. Then,  $R$ has a canonical reduction if and only if $R[[X_1, X_2, \cdots, X_n]]$ has a canonical reduction.
					\item [(b)] Let $\hat{R}$ denotes the $\fm$-adic completion of $R$. Then, $R$ has a canonical reduction if and only if $\hat{R}$ has a canonical reduction.
				\end{itemize}
			\end{cor}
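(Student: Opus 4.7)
The plan is to derive both (a) and (b) by applying Theorem \ref{canredflat} to the natural flat local homomorphisms $\varphi_{1}\colon R\lo R[[X_{1},\ldots,X_{n}]]$ and $\varphi_{2}\colon R\lo \widehat{R}$. Both homomorphisms are faithfully flat and local; their closed fibers are $k[[X_{1},\ldots,X_{n}]]$ and $k$ respectively, hence regular and in particular Gorenstein. Since $R$ admits a canonical ideal, $R$ is generically Gorenstein, and one inherits this property for the target ring: associated primes of the target lie over associated primes of $R$ (via faithful flatness), and the induced flat local homomorphism $R_{\fp}\lo S_{\fq}$ has Gorenstein source and a fiber whose relevant localization is a field, so $S_{\fq}$ is Gorenstein. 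Thus both $\widehat{R}$ and $R[[X_{1},\ldots,X_{n}]]$ admit canonical ideals, and the standing hypotheses of Theorem \ref{canredflat} are met.

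Next I would verify the multiplicity equality. For (b), $\widehat{R}/\fm\widehat{R}\cong R/\fm$ is a field with multiplicity $1$, and it is a classical fact that $\e_{\widehat{\fm}}^{0}(\widehat{R})=\e_{\fm}^{0}(R)$, so the equality holds trivially. For (a), $S/\fm S=k[[X_{1},\ldots,X_{n}]]$ is a regular local ring of dimension $n$, hence its multiplicity equals $1$; using that $R/\fm$ is infinite we can choose a minimal reduction $(a_{1},\ldots,a_{d})$ of $\fm$, and then $(a_{1},\ldots,a_{d},X_{1},\ldots,X_{n})$ is a system of parameters generating a minimal reduction of $\fn=\fm S+(X_{1},\ldots,X_{n})$, yielding
$$\e_{\fn}^{0}(S)=\ell_{S}\!\left(S/(a_{1},\ldots,a_{d},X_{1},\ldots,X_{n})\right)=\ell_{R}(R/(a_{1},\ldots,a_{d}))=\e_{\fm}^{0}(R),$$
which is precisely $\e_{\fn/\fm S}^{0}(S/\fm S)\cdot\e_{\fm}^{0}(R)$.

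With all hypotheses in place, the conclusion is immediate. In (b), $\dim\widehat{R}=\dim R=d$, so Theorem \ref{canredflat}(i) applies and provides both implications, using the infiniteness of $R/\fm$ for the converse. In (a), if $n=0$ the statement is tautological; otherwise $\dim R[[X_{1},\ldots,X_{n}]]=d+n>d$, and since $S/\fm S=k[[X_{1},\ldots,X_{n}]]$ is regular and $R/\fm$ is infinite, Theorem \ref{canredflat}(ii) delivers the equivalence. The only mildly nontrivial step is the verification of generic Gorensteinness and the multiplicity identity for the power series extension; both reduce to standard flat-base-change and multiplicity arguments already present in the paper (in particular Lemma \ref{LemMul}), so the corollary follows essentially mechanically from the theorem.
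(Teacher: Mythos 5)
Your proposal is correct and follows exactly the route the paper intends: the corollary is stated there as an immediate consequence of Theorem \ref{canredflat} applied to the faithfully flat local maps $R\lo R[[X_1,\ldots,X_n]]$ and $R\lo\widehat{R}$, and you have simply supplied the routine verifications (regular, hence Gorenstein, closed fibers; generic Gorensteinness of the target; the multiplicity identity via Lemma \ref{LemMul} and the reduction $(a_1,\ldots,a_d,X_1,\ldots,X_n)$) that the paper leaves implicit. No gaps.
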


			{\bf Acknowledgment.}  I would like to thank Professor Shiro Goto for presenting some lectures about `` almost Gorenstein rings " in Iran (2014). The author thanks the Graduate School of Advanced Mathematical Sciences of Meiji University hosting him as a visitor and a member of Professor Goto research group. The author also thanks Raheleh Jafari for  discussions on numerical semigroup rings.

\end{document}